\newtheorem{Theorem}{Theorem}[section]
\newtheorem{Lemma}[Theorem]{Lemma}
\newtheorem{Remark}[Theorem]{Remark}
\newtheorem{assump}{Assumption}
\newenvironment{myassump}[2][]
{\begin{assump}[#1]}
	{\end{assump}}
\def \btheta{\boldsymbol{\theta}}
\begin{document}

\title{Communication Optimality Trade-offs For Distributed Estimation}

\author{\name Anit Kumar Sahu \email anits@andrew.cmu.edu \\
       \addr Department of Electrical and Computer Engineering\\
       Carnegie Mellon University\\
       Pittsburgh, PA 15213, USA
       \AND
       \name Dusan Jakovetic \email djakovet@uns.ac.rs\\
       \addr Department of Mathematics and Informatics\\
       Faculty of Sciences, University of Novi Sad\\
       21000 Novi Sad, Serbia
       \AND
       \name Soummya Kar \email soummyak@andrew.cmu.edu \\
       \addr Department of Electrical and Computer Engineering\\
       Carnegie Mellon University\\
       Pittsburgh, PA 15213, USA}

\editor{}

\maketitle

\begin{abstract}
This paper proposes $\mathbf{C}$ommunication efficient $\mathbf{RE}$cursive $\mathbf{D}$istributed estimati$\mathbf{O}$n algorithm, $\mathcal{CREDO}$, for networked multi-worker setups without a central master node. $\mathcal{CREDO}$ is designed for scenarios in which the worker nodes aim to collaboratively estimate a vector parameter of interest using distributed online time-series data at the individual worker nodes. The individual worker nodes iteratively update their estimate of the parameter by assimilating latest locally sensed information and estimates from neighboring worker nodes exchanged over a (possibly sparse) time-varying communication graph. The underlying inter-worker communication protocol is adaptive, making communications increasingly (probabilistically) sparse as time progresses. Under minimal conditions on the inter-worker information exchange network and the sensing models, almost sure convergence of the estimate sequences at the worker nodes to the true parameter is established.
Further, the paper characterizes the performance of $\mathcal{CREDO}$ in terms of asymptotic covariance of the estimate sequences and specifically establishes the achievability of optimal asymptotic covariance. The analysis reveals an interesting interplay between the algorithm's communication cost~$\mathcal{C}_{t}$ (over $t$ time-steps) and the asymptotic covariance. Most notably, it is shown that $\mathcal{CREDO}$ may be designed to achieve a $\Theta\left(\mathcal{C}_{t}^{-2+\zeta}\right)$ decay of the mean square error~($\zeta>0$, arbitrarily small) at each worker node, which significantly improves over the existing $ \Theta\left(\mathcal{C}_{t}^{-1}\right)$ rates.
Simulation examples on both synthetic and real data sets demonstrate $\mathcal{CREDO}$'s communication efficiency.
\end{abstract}

\begin{keywords}
Distributed Estimation, Statistical Inference, Stochastic Approximation, Networks, Optimization
\end{keywords}

\section{Introduction}
\label{sec:intro}
Distributed data processing techniques have been increasingly employed to solve problems pertaining to optimization and statistical inference. With massive computing resources that are available at scale, and ever growing sizes of data sets, it becomes highly desirable, if not necessary, to distribute the task among multiple machines or multiple cores. The benefits of splitting the task into smaller subtasks are multi-pronged, namely, it makes the problem at hand, scalable, parallelized and fast. In the context of distributed stochastic optimization, several methods~(see, for example \citet{zhang2013divide,zhang2013information,heinze2016dual,ma2015adding,recht2011hogwild}) have been proposed which exhibit impressive performance in platforms such as Mapreduce and Spark. The aforementioned methods, though highly scalable, are designed for master-worker or similar types of architectures. That is, they require the presence of a master node, i.e., a central coordinator which is tasked with splitting the dataset by data points~(batches) or by features among worker nodes and enabling the read/write operations of the iterates of the worker nodes so as to ensure information fusion across the worker nodes. However, with several emerging applications, master-worker type architectures may not be feasible or desirable due to physical constraints. Specifically, we are interested in systems and applications where the entire data is not available at a central/master node, is sensed in a streaming fashion and is intrinsically distributed across the worker nodes. Such scenarios arise, e.g., in systems which involve Internet of Things~(IoT). For example, a smart campus with sensors of various kinds, a smart building or monitoring a large scale industrial plant. Therein, a network of large number of heterogeneous entities~(usually, geographically spread) connected in a arbitrary network structure individually perform sensing for data arriving in a streaming fashion. The sensing devices have limited communication capabilities owing to on board power constraints and harsh environments.
A typical IoT framework is characterized by a heterogeneous network of entities without a central coordinator, where entities have localized knowledge and can exchange information among each other through an arbitrary pre-specified communication graph. Furthermore, the data samples arrive in a streaming fashion. The ad-hoc nature of the IoT framework necessitates the information exchange in a crafted manner, rather than just a single or few rounds of communication at the end as in \citet{zhang2013divide,zhang2013information,heinze2016dual,ma2015adding}.

Distributed algorithms for statistical inference and optimization in the aforementioned frameworks are characterized by central coordinator-less recursive procedures, where each entity in the network maintains its own estimate or optimizer for the problem at hand. Also, due to heterogeneity of the entities and lack of global model information, the information exchange is limited to the iterates and not the raw data. This additionally enhances privacy as far as individual worker raw data is concerned. In particular, the diffusion and consensus+innovations schemes have been extensively used for various distributed inference problems in the aforementioned frameworks, which include distributed parameter estimation, distributed detection and multi-task learning, to name a few~(see, for example, \citet{kar2008distributed,Sayed-LMS,cattivelli2010diffusion,kar2011convergence,bajovic2015distributed,sahu2015distributed,jakoveticxaviermoura-11,chen2014multitask}). Other variants of distributed recursive algorithms of such kinds have generated a lot of interest of late~(see, for example \citet{nedic2014nonasymptotic,ram2010distributed,braca2008enforcing,Nedic-parameter,Ram-Nedich-Siam,Nedic-opt,jadbabaie2012non}).\\
An entity or node in an IoT setup is usually equipped with on board communication and computation units. However, finite battery power calls for frugal communication protocols as the power used in communication tends to beat the power required for on board computation. Thus, \emph{communication efficiency} is highly relevant and sought for in such scenarios. As far as distributed parameter estimation is concerned, the previously studied distributed algorithms mentioned above, have the mean square error (MSE) of estimation decay as $\Theta(\mathcal{C}_{t}^{-1})$, in terms of the communication cost $\mathcal{C}_{t}$, measured as the total number of communications between neighboring worker nodes over $t$ (discrete) time-steps and assuming each worker node obtains an independent measurement sample at each time. In this paper, we present a distributed recursive algorithm, {\bf C}ommunication Efficient {\bf RE}cursive {\bf D}istributed Estimati{\bf O}n~($\mathcal{CREDO}$) characterized by a frugal communication protocol while guaranteeing provably reasonable performance, which improves the dependence between MSE decay and communication rate to $\Theta\left(\mathcal{C}_{t}^{-2+\zeta}\right)$, for arbitrarily small $\zeta>0$. Specifically, this paper focuses on the above described class of \emph{distributed, recursive} algorithms for estimation of an unknown
vector parameter $\btheta$, where each worker sequentially, in time, observes noisy measurements of low-dimensional linear transformations of $\btheta$.
For this problem,we improve the communication efficiency
of existing distributed recursive estimation methods primarily in the \emph{consensus}+\emph{innovations} and the diffusion frameworks ~\citet{kar2011convergence,cattivelli2010diffusion,bajovic2015distributed,Sayed-LMS,sahu2015distributed,jakoveticxaviermoura-11,chen2014multitask}, which in turn may be adapted to improve the communication efficiency of variants such as~ \citet{nedic2014nonasymptotic,ram2010distributed,braca2008enforcing,Nedic-parameter,Ram-Nedich-Siam,Nedic-opt}.
Our contributions are as follows:\\
\noindent We propose a scheme, namely $\mathcal{CREDO}$, where each node at time~$t$
communicates only with a certain probability
that decays sub linearly to zero in~$t$. That is,
communications are increasingly
sparse,
so that communication cost
scales as $\Theta(t^{\delta})$, where
the rate $\delta\in (1/2, 1)$ is a
tunable parameter. \par
\noindent We show that, despite significantly
lower communication cost,
the proposed method achieves
the best possible $\Theta(1/t)$
rate of MSE decay
over~$t$ time-steps ($t$ also equals to per-worker number of data samples obtained over the $t$ time-steps).
Importantly, this result translates
into significant improvements in
the rate at which MSE decays with
communication cost~$\mathcal{C}_t$ --
namely from~$\Theta(1/\mathcal{C}_t)$
with existing methods to
$\Theta(1/\mathcal{C}_t^{2-\zeta})$ with
the proposed method, where
$\zeta>0$ is arbitrarily small. \par
\noindent We further study
asymptotic normality
and the corresponding asymptotic variance
of the proposed method (that in a sense
relates to the constant in the $\Theta(1/t)$
MSE decay rate).
We characterize and quantify interesting
trade-offs between
the communication cost and the asymptotic
variance of the method.
In particular, we
explicitly quantify
the regime (the range of the communication rate parameter~$\delta$) where
the asymptotic variance
is independent of the network topology
and, at the same time,
communication cost
is strictly sub linear~($\delta<1$).
Numerical examples both on synthetic and real data sets
confirm the significantly improved communication efficiency
of the proposed method.\par
\noindent A key insight behind $\mathcal{CREDO}$ is that it recognizes that inter-node communications can be made (probabilistically) increasingly sparse without sacrificing estimation performance. It can be shown using ideas from stochastic approximation that the weights that each node assigns to its neighboring nodes can be made to decrease with time while keeping the estimator strongly consistent. $\mathcal{CREDO}$ replaces such a deterministic weight $w(t)$ ($t$ being time) with a Bernoulli random variable that equals one with probability $w(t)<1$. Thus, $\mathcal{CREDO}$ is much cheaper to implement as communication takes place only with probability $w(t)$, with $w(t)$ decaying to zero. Despite the adaptive weighting being very different, existence of broad regimes of algorithm parameters are shown where $\mathcal{CREDO}$'s estimation performance matches closely the benchmarks iteration-wise. However, as $\mathcal{CREDO}$ has much fewer communications per iteration, it becomes more communication efficient.\par
\noindent Several new technical tools are developed in this paper to achieve the above results that could be of independent interest. Specifically,
the studied setup requires analysis of \emph{mixed time-scale stochastic approximation} algorithms with \emph{three different time scales}. This setup stands in contrast with the classical single time-scale stochastic approximation, the properties of which are well known. It is also very different from the more commonly studied two time-scale stochastic approximation (see, for instance \citet{Borkar-stochapp}) in which a fast process is coupled with a slower dynamical system. We develop here new technical tools that allow us to handle the case of number of operating time-scales to be three instead of two as in \citet{kar2013distributed} for mixed time-scale stochastic approximation~(described in details later).

%
%
%

The rest of the paper is organized as follows. Section~2
describes the problem that we consider, gives the needed preliminaries on
conventional (centralized) and distributed recursive estimation, and reviews related work.
Section~3 presents the novel $\mathcal{CREDO}$ algorithm
that we propose, while Section~4 states our main results on the
algorithm's performance. Finally, we conclude in Section~5.
Proofs of the main and auxiliary results are relegated to the Appendix.

\section{Problem Setup: Motivation and Preliminaries}
\label{sec:prel}
There are $N$ workers deployed in the network. Every worker $n$ at (discrete) time index $t$ makes a noisy observation $\mathbf{y}_{n}(t)$, a noisy linear function of the parameter $\btheta$, where $\btheta\in\mathbb{R}^{M}$. Formally the observation model for the $n$-th worker is given by,
\begin{align}
\label{eq:sens_ob}
\mathbf{y}_{n}(t)=\mathbf{H}_{n}\btheta+\mathbf{\gamma}_{n}(t),
\end{align}
\noindent where $\mathbf{H}_{n}\in\mathbb{R}^{M_{n}\times M}$ is the sensing matrix, where $M_{n}<M$, $\{\mathbf{y}_{n}(t)\} \in \mathbb{R}^{M_{n}}$ is the observation sequence for the $n$-th worker and $\{\mathbf{\gamma}_{n}(t)\}$ is a zero mean temporally independent and identically distributed~(i.i.d.) noise sequence at the $n$-th worker with nonsingular covariance $\mathbf{\Sigma}_{n}$, where $\mathbf{\Sigma}_{n}\in\mathbb{R}^{M_{n}\times M_{n}}$. The noise processes are independent across different workers. We state an assumption on the noise processes before proceeding further. 
\begin{myassump}{M1}
	\label{m:1}
	\emph{There exists $\epsilon_{1}>0$, such that, for all $n$, $\mathbb{E}_{\btheta}\left[\left\|\gamma_{n}(t)\right\|^{2+\epsilon_{1}}\right]<\infty$.}
\end{myassump}
The above assumption encompasses a general class of noise distributions in the setup. The heterogeneity of the setup is exhibited in terms of the sensing matrix and the noise covariances at the worker nodes. Each worker node is interested in reconstructing the true underlying parameter $\btheta$. We assume a worker node is aware only of its local observation model and hence does not know about the observation matrix and noise processes of other worker nodes.
In this paper, we are interested in \emph{recursive distributed estimators}. By recursive, we mean estimators that, at each node $n$, continuously produce (update) estimates of $\btheta$ at each time $t$, i.e., after each new sample $\mathbf{y}_{n}(t)$ is acquired. By distributed, we restrict attention to those estimators in which each node $n$ in the network, at each time $t$, exchanges its current local estimate of $\btheta$ with its immediate neighbors conforming with a pre-specified communication graph, and assimilates its newly acquired observation $\mathbf{y}_n(t)$.
\subsection{Motivation and Related Work}
\label{subsec:motivation}
\noindent We now briefly review the literature on distributed inference and motivate our algorithm $\mathcal{CREDO}$.  Distributed inference algorithms can be broadly divided into two classes. The first class of distributed inference algorithms proposed in \citet{liu2014distributed,ma2015adding,ma2015partitioning,heinze2016dual,zhang2013divide} require a central master node so as to coordinate as far as assigning sub-tasks to the worker nodes is concerned. There are two reasons as to why such methods do not apply in our setting. Firstly, these setups, in order for the central node to be able to assign sub-tasks, require the central node to have access to the entire dataset. However, in the setup considered in this paper, where the data samples are intrinsically distributed among the worker nodes and rather ad-hoc, the presence of a central master node is highly impractical. Even in the case when the data is distributed among nodes to start with, the local data samples collected via \eqref{eq:sens_ob} are not sufficient to uniquely reconstruct the global parameter of interest. In particular, the sensing matrix $\mathbf{H}_{n}$ at an agent $n$ is rank deficient, i.e., $rank(\mathbf{H}_{n}) = M_{n}< M$, in general.  We refer to this phenomenon as \emph{local unobservability}. With communication being the most power hungry aspect for an ad-hoc sensing entity, communicating raw data back to a central node so as to re-assign the data among worker nodes is prohibitive. Thus in such an ad-hoc and distributed setup, a communication protocol should involve information fusion via exchange of the latest estimates among worker nodes thus enabling each worker node to aggregate information about all the entries of the parameter. Secondly, in general, they are not applicable to the heterogeneous sensing model \eqref{eq:sens_ob} being considered here. For example, if $\mathbf{H}_{n} = h\mathbf{I}$, it reduces to the case, where each worker can work independently to obtain a reasonably good estimate of $\btheta$ and algorithms such as $CoCoA^{+}$~(\citet{ma2015adding}) and $Dual-LOCO$~(\citet{heinze2016dual}) may then address the problem efficiently through data splitting across samples and features respectively. However, if $\mathbf{H}_{n} = \mathbf{e}_{n}^{\top}$, where $ \mathbf{e}_{n}$ is the $n$-th canonical basis vector of $\mathbb{R}^{M}$, a random splitting across samples would lead to estimates with a high mean square error, while a feature wise splitting is still possible. But, in the case when, $\mathbf{H}_{n} = \left(\mathbf{e}_{n}+\mathbf{e}_{n-1}\right)^{\top}$, neither sample splitting nor feature splitting is possible and such a setup necessitates more rounds of communication as opposed to just one round of communication at the end as in the case of $CoCoA^{+}$~(\citet{ma2015adding}) and $Dual-LOCO$~(\citet{heinze2016dual}). \\
\noindent The second class of distributed inference algorithms involve setups, which are characterized by the absence of a master node. Communication efficient distributed recursive algorithms in the context of distributed optimization with no central node, where data is available apriori and is not collected in a streaming fashion has been addressed in
\citet{tsianos2012communication,tsianos2013networked,jakovetic2016distributed} through increasingly sparse communication, adaptive communication scheme and selective activation of nodes respectively. However, the explicit characterization of the performance metric, for instance MSE, in terms of the communication cost has not been addressed in the aforementioned references.\\
\noindent The well studied class of distributed estimation algorithms in the \emph{consensus}+\emph{innovations} framework~\citet{kar2011convergence,kar2013distributed} characterize the algorithm parameters, under which estimate sequences optimal in the sense of asymptotic covariance can be obtained. However, the inter-agent message passing and the associated communication cost is not taken into account in the aforementioned algorithms. The lack of exploration into the dimension of communication cost in the context of distributed estimation algorithms in the \emph{consensus}+\emph{innovations} framework motivated us to develop a stochastic communication protocol in this paper, that exploits the redundancy in inter-agent message passing while not compromising on the optimality aspect of the estimate sequence. Hence, in order to test the efficacy of our stochastic message-passing protocol, we take the distributed estimation algorithm proposed in ~\citet{kar2011convergence,kar2013distributed} as the primary benchmark.
\noindent

\subsection{Preliminaries: Oracle and Distributed Estimation}
\label{subsec:prel}
In this section we go over the preliminaries of oracle and distributed estimation.\\
\textbf{Oracle Estimation:}\\
In the setup described above in \eqref{eq:sens_ob}, if a hypothetical oracle node having access to the data samples at all the nodes at all times were to conduct the parameter estimation in an iterative manner, it would do so in the following way:
\begin{align*}
&\mathbf{x}_{c}(t+1)=\mathbf{x}_{c}(t)\nonumber\\&+\underbrace{\frac{a}{t+1}\sum_{n=1}^{N}\mathbf{H}_{n}^{\top}\mathbf{\Sigma}_{n}^{-1}\left(\mathbf{y}_{n}(t)-\mathbf{H}_{n}\mathbf{x}_{c}(t)\right)}_{\text{Global Innovation}},
\end{align*}
where $a$ is a positive constant. It is well known from standard stochastic approximation results (see, for example,~\citet{Nevelson}) that the sequence $\{\mathbf{x}_{c}(t)\}$ generated from the update above converges almost surely to the true parameter $\btheta$. Moreover, the sequence $\{\mathbf{x}_{c}(t)\}$ is asymptotically normal, i.e,
\begin{align*}
\sqrt{t+1}\left(\mathbf{x}_{c}(t)-\btheta\right)\overset{\mathcal{D}}{\Longrightarrow}\mathcal{N}\left(0,\left(N\mathbf{\Gamma}\right)^{-1}\right),
\end{align*}
where $\mathbf{\Gamma}=\frac{1}{N}\sum_{n=1}^{N}\mathbf{H}_{n}^{\top}\mathbf{\Sigma}_{n}^{-1}\mathbf{H}_{n}$. The above established asymptotic normality also points to the conclusion that the mean square error~(MSE) decays as $\Theta(1/t)$.\\
However, such an oracle based scheme may not be implementable in our distributed multi-worker setting with time-varying sparse inter-worker interaction primarily due to the fact that the desired global innovation computation requires instantaneous access to the entire set of network sensed data at all times at the oracle.\\
\textbf{Distributed Estimation:}\\
Distributed estimation scenarios where the global model information is not available at each worker, makes it necessary to communicate at a properly crafted rate. An aptly chosen communication rate would then ensure information flow among the worker nodes so that every worker is able to estimate the parameter of interest. If in the case of a distributed setup, a worker $n$ in the network were to replicate the centralized update by replacing the global innovation in accordance with its local innovation (i.e., based on its local sensed data only), the updates for the parameter estimate becomes
\begin{align*}
&\widehat{\mathbf{x}}_{n}(t+1)=\widehat{\mathbf{x}}_{n}(t)\nonumber\\&+\underbrace{\frac{a}{t+1}\mathbf{H}_{n}^{\top}\left(\widehat{\mathbf{x}}_{n}(t)\right)\mathbf{\Sigma}_{n}^{-1}\left(\mathbf{y}_{n}(t)-\mathbf{H}_{n}\widehat{\mathbf{x}}_{n}(t)\right)}_{\text{Local Innovation}},
\end{align*}
where $\left\{\widehat{\mathbf{x}}_{n}(t)\right\}$ represents the estimate sequence at worker $n$. The above update involves purely decentralized and independent local processing with no collaboration among the workers whatsoever. However, note that in the case when the data samples obtained at each worker lacks information about all the features, the parameter estimates would be erroneous and sub-optimal.
Hence, as a surrogate to the global innovation in the centralized recursions, the local estimators compute a local innovation based on the locally sensed data as a worker has access to the information in its neighborhood. The information loss at a node is compensated by incorporating an agreement or consensus potential into their updates which is then incorporated~(see, for example \citet{kar2008distributed,kar2011convergence,nedic2014nonasymptotic,ram2010distributed,braca2008enforcing,cattivelli2010diffusion,Sayed-LMS,Nedic-parameter,jakoveticxaviermoura-11,Ram-Nedich-Siam,Nedic-opt}) as follows:
\begin{align}
\label{eq:benchmark_ci}
&\mathbf{x}_{n}(t+1) = \mathbf{x}_{n}(t)-\underbrace{\frac{b}{(t+1)^{\delta_{1}}}\sum_{l\in\Omega_{n}(t)}\left(\mathbf{x}_{n}(t)-\mathbf{x}_{l}(t)\right)}_{\text{Neighborhood Consensus}}\nonumber\\&+\underbrace{\frac{a}{t+1}\mathbf{H}_{n}^{\top}\mathbf{\Sigma}_{n}^{-1}\left(\mathbf{y}_{n}(t)-\mathbf{H}_{n}\mathbf{x}_{n}(t)\right)}_{\text{Local Innovation}},
\end{align}
where $0 <\delta_{1}<1$, $\Omega_{n}(t)$ represents the neighborhood of worker $n$ at time $t$ and $a,b$ are appropriately chosen positive constants. In the above scheme, the information exchange among worker nodes is limited to the parameter estimates. It has been shown in previous work that under appropriate conditions~(see, for example \citet{kar2013distributed}), the estimate sequence  $\{\mathbf{x}_{n}(t)\}$ converges to $\btheta$ and is asymptotically normal, i.e.,
\begin{align*}
\sqrt{t+1}\left(\mathbf{x}_{n}(t)-\btheta\right)\overset{\mathcal{D}}{\Longrightarrow}\mathcal{N}\left(0,\left(N\mathbf{\Gamma}\right)^{-1}\right),
\end{align*}
where $\mathbf{\Gamma}=\frac{1}{N}\sum_{n=1}^{N}\mathbf{H}_{n}^{\top}\mathbf{\Sigma}_{n}^{-1}\mathbf{H}_{n}$. The above established asymptotic normality also points to the conclusion that the MSE decays as $\Theta(1/t)$.\\
\textbf{Communication Efficiency}\\
Define the communication cost $\mathcal{C}_{t}$ to be the expected per-node number of transmissions up to iteration $t$, i.e.,
\begin{align}
\label{eq:comm_cost_1}
\mathcal{C}_{t}= \mathbb{E}\left[\sum_{s=0}^{t-1}\mathbb{I}_{\{\textit{node}~C~\textit{transmits~at}~s\}}\right],
\end{align}
where $\mathbb{I}_{A}$ represents the indicator of event $A$. The communication cost $\mathcal{C}_{t}$ for both the oracle estimator and the distributed estimators in \citet{kar2011convergence,cattivelli2010diffusion,Sayed-LMS,chen2014multitask} comes out to be $\mathcal{C}_{t} = \Theta\left(t\right)$, where we note that the time index $t$ also matches the number of per node samples collected till time $t$.  Other close variants of the above mentioned recursive distributed estimation schemes such as the ones in  \citet{nedic2014nonasymptotic,ram2010distributed,braca2008enforcing,Nedic-parameter,jakoveticxaviermoura-11,Ram-Nedich-Siam,Nedic-opt} have a $\mathcal{C}_{t} = \Theta\left(t\right)$ communication cost as well. In other words, we have MSE decaying as $\Theta\left(\frac{1}{\mathcal{C}_{t}}\right)$. Both the paradigms achieve an order-optimal MSE decay rate~$\Theta(1/t)$ in terms of the number
of observations~$t$. Hence, in the setting that we consider, the $\Theta(1/t)$ MSE decay rate with respect to the number of observations cannot be improved upon.\\
In this paper, we ask the highly nontrivial question whether the rate $\Theta(1/\mathcal{C}_t)$ can be improved within the class of recursive distributed estimators. To be particular, we consider recursive distributed estimators with randomized communication protocols, in general. For such estimators, we denote $\mathcal{C}_t$ to be the expected per-node communication cost up to time~$t$. Define the \emph{MSE sensing rate} as the rate at which the MSE decays with the number of per-node samples~$t$. (For example, for  estimator~\eqref{eq:benchmark_ci}, we have
that $\mathrm{MSE}=\Theta(1/t)$.) Similarly, define the \emph{MSE communication rate} as the rate at which the MSE decays with the expected number of per-node communications~$C_t$.
(For example, with estimator~\eqref{eq:benchmark_ci}, we have that $\mathrm{MSE}=\Theta(1/\mathcal{C}_t)$). We are then interested in the achievable pairs (sensing rate, communication rate)
with distributed recursive estimators. A specific question is the following: Given that the $\Theta(1/t)$ sensing rate  cannot be improved in the setting we consider (in fact, limited by the law of large numbers assuming non-degenerate noise covariances), can we improve the communication rate without compromising the sensing rate? If the answer to the above question is affirmative, what specific communication rates are achievable? Subsequent sections provide a detailed study to respond to these questions.\\
\section{$\mathcal{CREDO}$: A communication efficient distributed recursive estimator}
\noindent We now present the proposed $\mathcal{CREDO}$ estimator. $\mathcal{CREDO}$ is based on a specifically handcrafted time decaying communication rate protocol. Intuitively, we basically exploit the idea that, once the information flow starts in the graph and a worker node is able to accumulate sufficient information about the parameter of interest, the need to communicate with its neighboring nodes goes down. Technically speaking, for each node $n$, at every time $t$, we introduce a binary random variable $\psi_{n,t}$, where
\begin{align}
\label{eq:dec_rule_lin_1}
\psi_{n,t}=
\begin{cases}
\rho_{t} &~~\textit{with~probability}~\zeta_{t}\\
0 & ~~\textit{else},
\end{cases}
\end{align}
where $\psi_{i,t}$'s are independent both across time and the nodes, i.e., across $t$ and $n$ respectively. The random variable $\psi_{n,t}$ abstracts out the decision of the node $n$ at time $t$ whether to participate in the neighborhood information exchange or not. We specifically take $\rho_{t}$ and $\zeta_{t}$ of the form
\begin{align}
\label{eq:time_decay}
\rho_{t} = \frac{\rho_{0}}{(t+1)^{\epsilon/2}},~~ \zeta_{t} = \frac{\zeta_{0}}{(t+1)^{(\tau_{1}/2-\epsilon/2)}},
\end{align}
where $0<\epsilon<\tau_{1}$ and $0<\tau_{1}\leq 1$. Furthermore, define $\beta_{t}$ to be
\begin{align}
\label{eq:beta}
\beta_{t}=\left(\rho_{t}\zeta_{t}\right)^{2} = \frac{\beta_{0}}{(t+1)^{\tau_{1}}}.
\end{align}
The pres-specified (possibly sparse) inter-node communication network to which the information exchange between nodes conforms to is modeled as an \emph{undirected} simple connected graph $G=(V,E)$, with $V=\left[1\cdots N\right]$ and~$E$ denoting the set of nodes and communication links. The neighborhood of node~$n$ is given by $\Omega_{n}=\left\{l\in V\,|\,(n,l)\in E\right\}$. The node~$n$ has degree $d_{n}=|\Omega_{n}|$. The structure of the graph is described by the  $N\times N$ adjacency matrix, $\mathbf{A}=\mathbf{A}^\top=\left[\mathbf{A}_{nl}\right]$, $\mathbf{A}_{nl}=1$, if $(n,l)\in E$, $\mathbf{A}_{nl}=0$, otherwise. The graph Laplacian $\mathbf{L}=\mathbf{D}-\mathbf{A}$ is positive definite, with eigenvalues ordered as $0=\lambda_{1}(\mathbf{L}) \leq\lambda_{2}(\mathbf{L}) \leq\cdots \leq \lambda_{N}(\mathbf{L})$, where $\mathbf{D}$ is given by $\mathbf{D}=\mbox{diag}\left(d_{1}\cdots d_{N}\right)$. Moreover, for a connected graph, $\lambda_{2}(\mathbf{L})>0$.
\noindent With the above development in place, we define the random time-varying Laplacian $\mathbf{L}(t)$, where $\mathbf{L}(t)\in\mathbb{R}^{N\times N}$ which abstracts the inter-node information exchange as follows:
\begin{align}
\label{eq:laplacian}
\mathbf{L}_{i,j}(t)=
\begin{cases}
-\psi_{i,t}\psi_{j,t} & \{i,j\}\in E, i\neq j\\
0 & i\neq j, \{i,j\}\notin E\\
-\sum_{l\neq i}\psi_{i,t}\psi_{l,t}& i=j.
\end{cases}
\end{align}
The above communication protocol allows two nodes to communicate only when the link is established in a bi-directional fashion and hence avoids directed graphs. The design of the communication protocol as depicted in \eqref{eq:dec_rule_lin_1}-\eqref{eq:laplacian} not only decays the weight assigned to the links over time but also decays the probability of the existence of a link. Such a design is consistent with frameworks where the working nodes have finite power and hence not only the number of communications, but also, the quality of the communication decays over time.
\noindent We have, for $\{i,j\}\in E$:
\begin{align}
\label{eq:expectation}
&\mathbb{E}\left[\mathbf{L}_{i,j}(t)\right]= -\left(\rho_{t}\zeta_{t}\right)^{2} = -\beta_{t} = -\frac{c_{3}}{(t+1)^{\tau_{1}}}\nonumber\\
&\mathbb{E}\left[\mathbf{L}_{i,j}^{2}(t)\right] = \left(\rho_{t}^{2}\zeta_{t}\right)^{2} = \frac{c_{4}}{(t+1)^{\tau_{1}+\epsilon}}.
\end{align}
\noindent Thus, we have that, the variance of $\mathbf{L}_{i,j}(t)$ is given by,
\begin{align}
\label{eq:variance}
\textit{Var}\left(\mathbf{L}_{i,j}(t)\right) = \frac{\beta_{0}\rho_{0}^{2}}{(t+1)^{\tau_{1}+\epsilon}} - \frac{a^{2}}{(t+1)^{2\tau_{1}}}.
\end{align}
\noindent Define, the mean of the random time-varying Laplacian sequence $\{\mathbf{L}(t)\}$ as $\overline{\mathbf{L}}(t) = \mathbb{E}\left[\mathbf{L}(t)\right]$ and $\widetilde{\mathbf{L}}(t) = \mathbf{L}(t)-\overline{\mathbf{L}}(t)$.
\noindent Note that, $\mathbb{E}\left[\widetilde{\mathbf{L}}(t)\right] = \mathbf{0}$, and
\begin{align}
\label{eq:laplace_res}
\mathbb{E}\left[\left\|\widetilde{\mathbf{L}}(t)\right\|^{2}\right] \leq N^{2}\mathbb{E}\left[\widetilde{\mathbf{L}}_{i,j}^{2}(t)\right] = \frac{N^{2}\beta_{0}\rho_{0}^{2}}{(t+1)^{\tau_{1}+\epsilon}} - \frac{N^{2}a^{2}}{(t+1)^{2\tau_{1}}},
\end{align}
where $\left\|\cdot\right\|$ denotes the $L_{2}$ norm. The above equation follows from the relationship between the $L_{2}$ and Frobenius norms.\\
\noindent We also have that, $\overline{\mathbf{L}}(t)=\beta_{t}\overline{\mathbf{L}}$, where
\begin{align}
	\label{eq:laplacian1}
	\overline{\mathbf{L}}_{i,j}=
	\begin{cases}
	-1 & \{i,j\}\in E, i\neq j\\
	0 & i\neq j, \{i,j\}\notin E\\
	-\sum_{l\neq i}L_{i,l}& i=j.
	\end{cases}
	\end{align}
\noindent We formalize the assumptions on the inter-worker communication graph and global observability.
\begin{myassump}{M2}
	\label{m:2}
	\emph{We require the following global observability condition. The matrix $\mathbf{G}=\sum_{n=1}^{N}\mathbf{H}_{n}^{\top}\mathbf{\Sigma}_{n}^{-1}\mathbf{H}_{n}$ is full rank}.
\end{myassump}
Assumption \ref{m:2} is crucial for our distributed setup. This notion of rendering the parameter locally unobservable while it being globally observable in the context of distributed inference was introduced in \citet{kar2011convergence}, and has been subsequently used in \citet{lalitha2014social,sahu2017recursive}. It is to be noted that such an assumption is needed for even a setup with a centralized node which has access to all the data samples at each of the worker nodes at each time. Assumption \ref{m:2} ensures that if a node could stack all the data samples together at any time $t$, it would have sufficient information about the parameter of interest so as to be able to estimate the parameter of interest without any communication. Hence, the requirement for this assumption naturally extends to our distributed setup.
\begin{myassump}{M3}
	\label{m:3}
	\emph{The inter-worker communication graph is connected on average, i.e., $\lambda_{2}(\overline{\mathbf{L}}) > 0$, which implies $\lambda_{2}(\overline{\mathbf{L}}(t))>0$, where $\overline{\mathbf{L}}(t)$ denotes the mean of the Laplacian matrix $\mathbf{L}(t)$ and $\lambda_{2}\left(\cdot\right)$ denotes the second smallest eigenvalue}.
\end{myassump}
Assumption \ref{m:3} ensures consistent information flow among the worker nodes. Technically speaking, the communication graph modeled here as a random undirected graph need not be connected at all times. Hence, at any given time, only a few of the possible links could be active. The connectedness in average basically ensures that over time, the information from each worker node in the graph reaches other worker nodes over time in a symmetric fashion and thus ensuring information flow. It is to be noted that assumption \ref{m:3} ensures that $\overline{\mathbf{L}}(t)$ is connected at all times as $\overline{\mathbf{L}}(t)=\beta_{t}\overline{\mathbf{L}}$.
With the communication protocol established, we propose an update, where every node $n$ generates an estimate sequence $\{\mathbf{x}_{n}(t)\}$, with $\mathbf{x}_{n}(t)\in\mathbb{R}^{M}$, in the following way:
\begin{align}
\label{eq:ci_update_worker}
&\mathbf{x}_{n}(t+1) = \mathbf{x}_{n}(t)-\underbrace{\sum_{l\in\Omega_{n}}\psi_{n,t}\psi_{l,t}\left(\mathbf{x}_{n}(t)-\mathbf{x}_{l}(t)\right)}_{\text{Neighborhood Consensus}}\nonumber\\&+\underbrace{\alpha_{t}\mathbf{H}_{n}^{\top}\mathbf{\Sigma}_{n}^{-1}\left(\mathbf{y}_{n}(t)-\mathbf{H}_{n}\mathbf{x}_{n}(t)\right)}_{\text{Local Innovation}},
\end{align}
where $\Omega_{n}$ denotes the neighborhood of node $n$ with respect to the network induced by $\overline{\mathbf{L}}$ and $\alpha_{t}$ is the innovation gain sequence which is given by $\alpha_t = a/(t+1)$. It is to be noted that a node $n$ can send and receive information in its neighborhood at time $t$, when $\psi_{n,t}\neq 0$. At the same time, when $\psi_{n,t}= 0$, node $n$ neither transmits nor receives information. The link between node $n$ and node $l$ gets assigned a weight of $\rho_{t}^{2}$ if and only if $\psi_{n,t}\neq 0$ and $\psi_{l,t}\neq 0$.
\begin{Remark}
\label{rm:1}
The stochastic update procedure~\eqref{eq:ci_update_worker}, employed here may be viewed as a mixed time-scale stochastic approximation procedure as opposed to the classical single time-scale stochastic approximation, the properties of which are well known. Note, the above notion of mixed time-scale is very different from the more commonly studied two time-scale stochastic approximation (see, for instance \citet{Borkar-stochapp}) in which a fast process is coupled with a slower dynamical system. More relevant to our study are the mixed time-scale dynamics encountered in \citet{Gelfand-Mitter} and \citet{kar2013distributed} in which a single update procedure is influenced by multiple potentials with different time-decaying weights.
However, as opposed to the innovations term being a martingale difference sequence in the context of mixed time-scale stochastic approximation as proposed in \citet{Gelfand-Mitter}, the mixed time-scale stochastic approximation employed in this paper does not have an innovation term which is a martingale difference sequence and hence is of sufficient technical interest. The addition of the residual Laplacian $\widetilde{L}(t)$ sequence in the update further complicates the update in the context of this paper, by making the number of operating time-scales to be three instead of two as in \citet{kar2013distributed} for which we had to develop new technical machinery.
\end{Remark}
The above update can be written in a compact form as follows:
\begin{align}
\label{eq:ci_update}
&\mathbf{x}(t+1)=\left(\mathbf{I}_{NM}-\mathbf{L}(t)\otimes\mathbf{I}_{M}\right)\mathbf{x}(t)\nonumber\\&+\alpha_{t}\mathbf{G}_{H}\mathbf{\Sigma}^{-1}\left(\mathbf{y}(t)-\mathbf{G}_{H}^{\top}\mathbf{x}(t)\right),
\end{align}
where $\alpha_{t}=\frac{a}{t+1}$, $\mathbf{x}(t)=[\mathbf{x}_{1}^{\top}(t) ~\mathbf{x}_{2}^{\top}(t) \cdots \mathbf{x}_{N}^{\top}(t)]^{\top}$, $\mathbf{G}_{H}=diag[\mathbf{H}_{1}^{\top}, \mathbf{H}_{2}^{\top},\cdots, \mathbf{H}_{N}^{\top}]$, $\mathbf{y}(t)=[\mathbf{y}_{1}^{\top}(t) ~\mathbf{y}_{2}^{\top}(t) \cdots \mathbf{y}_{N}^{\top}(t)]^{\top}$ and $\mathbf{\Sigma}=diag\left[\mathbf{\Sigma}_{1},\cdots,\mathbf{\Sigma}_{N}\right]$.
\begin{Remark}
\label{rm:2}
The Laplacian sequence that plays a role in the analysis in this paper, takes the form $L(t)=\beta_{t}\overline{L}+\widetilde{L}(t)$, where $\widetilde{L}(t)$, the residual Laplacian sequence, does not scale with $\beta_{t}$ owing to the fact that the communication rate is chosen adaptively making the analysis significantly different from \citet{kar2013distributed}. Thus, unlike \citet{kar2013distributed}, the Laplacian matrix sequence is not identically distributed; the sequence of effective Laplacians have a decaying mean, thus adding another time-scale in the already mixed time-scale dynamics which necessitates the development of new technical tools to establish the order optimal convergence of the estimate sequence.
\end{Remark}
We formalize an assumption on the innovation gain sequence $\{\alpha_{t}\}$ before proceeding further.
\begin{myassump}{M4}
	\label{m:4}
	\emph{Let $\lambda_{min}\left(\cdot\right)$ denote the smallest eigenvalue. We require that $a$ satisfies\footnote{Note that, as will be shown later, $\mathbf{\Gamma}$ and $\overline{\mathbf{L}}\otimes\mathbf{I}_{M}+\mathbf{G}_{H}\mathbf{\Sigma}^{-1}\mathbf{G}_{H}^{\top}$ are positive definite matrices under the stated assumptions.}, $a\min\{\lambda_{min}\left(\mathbf{\Gamma}\right),\lambda_{min}\left(\overline{\mathbf{L}}\otimes\mathbf{I}_{M}+\mathbf{G}_{H}\mathbf{\Sigma}^{-1}\mathbf{G}_{H}^{\top}\right),\beta_{0}^{-1}\}\ge 1$, where $\otimes$ denotes the Kronecker product.}
\end{myassump}
The communication cost per node for the proposed algorithm is given by $\mathcal{C}_{t} = \sum_{s=0}^{t-1}\zeta_{s} = \Theta\left(t^{1+(\epsilon-\tau_{1})/2}\right)$,
which in turn is strictly sub-linear as $\epsilon < \tau_{1}$.
\section{Main Results}
\label{sec:main_res}
In this section, we present the main results of the proposed algorithm $\mathcal{CREDO}$, while the proof of the main results are relegated to the Appendix. The first result concerns with the consistency of the estimate sequence $\{\mathbf{x}_{n}(t)\}$.
\begin{Theorem}
	\label{th:cons} Let assumptions~\ref{m:1}-\ref{m:4} hold and let $\tau_{1}$ in the consensus potential in \eqref{eq:beta} be such that $0<\tau_{1}\leq 1$. Consider the sequence $\{\mathbf{x}_{n}(t)\}$ generated by \eqref{eq:ci_update_worker} at each worker $n$. Then, for each $n$, we have
	\begin{equation}
	\label{eq:th_cons1}
	\mathbb{P}_{\btheta}\left(\lim_{t\rightarrow\infty}\mathbf{x}_{n}(t)=\btheta\right)=1.
	\end{equation}
	In particular, if $\tau_{1}$ satisfies $0<\tau_{1}\leq 0.5-(2+\epsilon_{1})^{-1}$, we have that for all $\tau\in [0, 1/2)$, $\mathbb{P}_{\btheta}\left(\lim_{t\rightarrow\infty}(t+1)^{\tau}\|\mathbf{x}_{n}(t)-\btheta\|=0\right)=1$. 
	%
\end{Theorem}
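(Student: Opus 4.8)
I would begin by defining the global error $\mathbf{z}(t) = \mathbf{x}(t) - \mathbf{1}_{N}\otimes\btheta$ and the natural filtration $\mathcal{F}_{t} = \sigma(\mathbf{x}(0), \{\gamma(s), \mathbf{L}(s)\}_{s<t})$, under which $\gamma(t)$ and $\widetilde{\mathbf{L}}(t)$ are zero-mean and independent of $\mathcal{F}_{t}$. Substituting $\mathbf{y}_{n}(t)=\mathbf{H}_{n}\btheta+\gamma_{n}(t)$ into \eqref{eq:ci_update} and using $(\mathbf{L}(t)\otimes\mathbf{I}_{M})(\mathbf{1}_{N}\otimes\btheta)=\mathbf{0}$ (zero row sums of the Laplacian), the error obeys
\begin{align}
\mathbf{z}(t+1) = \left(\mathbf{I}_{NM} - \beta_{t}\overline{\mathbf{L}}\otimes\mathbf{I}_{M} - \alpha_{t}\mathbf{G}_{\Sigma}\right)\mathbf{z}(t) - \left(\widetilde{\mathbf{L}}(t)\otimes\mathbf{I}_{M}\right)\mathbf{z}(t) + \alpha_{t}\mathbf{G}_{H}\mathbf{\Sigma}^{-1}\gamma(t), \nonumber
\end{align}
where $\mathbf{G}_{\Sigma}=\mathbf{G}_{H}\mathbf{\Sigma}^{-1}\mathbf{G}_{H}^{\top}$ is block diagonal with $n$-th block $\mathbf{H}_{n}^{\top}\mathbf{\Sigma}_{n}^{-1}\mathbf{H}_{n}$. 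This recursion displays the announced three scales: the mean consensus weight $\beta_{t}=\Theta(t^{-\tau_{1}})$, the innovation gain $\alpha_{t}=\Theta(t^{-1})$, and the residual-Laplacian noise with $\mathbb{E}[\|\widetilde{\mathbf{L}}(t)\|^{2}]=\Theta(t^{-(\tau_{1}+\epsilon)})$, which crucially carries no factor $\beta_{t}$.

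\textbf{Orthogonal decomposition.} The plan is to split $\mathbf{z}(t)$ into its projection $\mathbf{z}_{\parallel}(t)=\mathbf{1}_{N}\otimes\bar{\mathbf{z}}(t)$ onto the consensus subspace $\mathcal{C}$, with $\bar{\mathbf{z}}(t)=\frac{1}{N}\sum_{n}\mathbf{z}_{n}(t)$, and the disagreement $\mathbf{z}_{\perp}(t)\in\mathcal{C}^{\perp}$, and to analyse each on its own scale. Since every Laplacian here is symmetric with zero row sums, both $\overline{\mathbf{L}}$ and $\widetilde{\mathbf{L}}(t)$ annihilate $\mathcal{C}$ and satisfy $\mathbf{1}_{N}^{\top}\overline{\mathbf{L}}=\mathbf{1}_{N}^{\top}\widetilde{\mathbf{L}}(t)=\mathbf{0}$, so the Laplacian terms drop out of the averaged recursion and act purely on $\mathbf{z}_{\perp}$. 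First I would treat the disagreement: on $\mathcal{C}^{\perp}$ the deterministic part contracts at a rate governed by $\beta_{t}\lambda_{2}(\overline{\mathbf{L}})>0$ (Assumption~\ref{m:3}), the innovation $\alpha_{t}\mathbf{G}_{\Sigma}$ inducing only lower-order $O(\alpha_{t})$ cross-coupling with $\mathbf{z}_{\parallel}(t)$ because $\alpha_{t}=o(\beta_{t})$, while the residual Laplacian enters as a zero-mean, state-dependent perturbation with conditional second moment $O(t^{-(\tau_{1}+\epsilon)})\|\mathbf{z}(t)\|^{2}$. As the noise-to-contraction ratio is $\Theta(t^{-\epsilon})\to 0$, a supermartingale/Lyapunov analysis of $\|\mathbf{z}_{\perp}(t)\|^{2}$ yields $\mathbf{z}_{\perp}(t)\to\mathbf{0}$ almost surely with a quantitative decay rate.

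\textbf{Averaged dynamics, consistency, and rate.} Projecting onto $\mathcal{C}$ eliminates both Laplacian terms, so $\bar{\mathbf{z}}(t)$ satisfies a perturbed centralized recursion
\begin{align}
\bar{\mathbf{z}}(t+1) = (\mathbf{I}_{M}-\alpha_{t}\mathbf{\Gamma})\bar{\mathbf{z}}(t) - \tfrac{\alpha_{t}}{N}\sum_{n}\mathbf{H}_{n}^{\top}\mathbf{\Sigma}_{n}^{-1}\mathbf{H}_{n}\,(\mathbf{z}_{n}(t)-\bar{\mathbf{z}}(t)) + \tfrac{\alpha_{t}}{N}\sum_{n}\mathbf{H}_{n}^{\top}\mathbf{\Sigma}_{n}^{-1}\gamma_{n}(t), \nonumber
\end{align}
with $\mathbf{\Gamma}\succ\mathbf{0}$ by Assumption~\ref{m:2}. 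The middle term is controlled by the established decay of $\mathbf{z}_{\perp}(t)$ and the last term is martingale-difference noise scaled by $\alpha_{t}$, so standard single-time-scale stochastic approximation (Robbins--Monro, \citet{Nevelson}, together with a Robbins--Siegmund almost-supermartingale lemma) gives $\bar{\mathbf{z}}(t)\to\mathbf{0}$ almost surely; combined with the disagreement estimate this yields \eqref{eq:th_cons1} for every $0<\tau_{1}\leq 1$. For the sharper pathwise rate $(t+1)^{\tau}\|\mathbf{z}(t)\|\to 0$, $\tau\in[0,1/2)$, I would upgrade to uniform moment bounds $\sup_{t}\mathbb{E}[\|\mathbf{z}(t)\|^{2+\epsilon_{1}}]<\infty$ using Assumption~\ref{m:1}, derive a moment decay $\mathbb{E}[\|\mathbf{z}(t)\|^{2+\epsilon_{1}}]=O(t^{-(2+\epsilon_{1})\tau'})$ with $\tau'$ arbitrarily close to $1/2$, and conclude by Markov's inequality plus Borel--Cantelli; the restriction $\tau_{1}\leq 0.5-(2+\epsilon_{1})^{-1}$ is exactly what forces the disagreement to decay fast enough in this higher moment so as not to degrade the $t^{-\tau'}$ rate inherited from the averaged recursion.

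\textbf{Main obstacle.} The crux is the residual Laplacian $\widetilde{\mathbf{L}}(t)$: unlike in \citet{kar2013distributed}, it is a state-dependent noise that does \emph{not} scale with the vanishing weight $\beta_{t}$, so it cannot be folded into the consensus potential and must be tracked as an independent third time scale alongside $\alpha_{t}$ and $\beta_{t}$. Propagating uniform $(2+\epsilon_{1})$-th moment bounds through this coupled, non-identically-distributed recursion and proving that the disagreement decays strictly faster than the averaged error in that moment is the technically demanding step, and is where new machinery beyond the two-time-scale analysis is required.
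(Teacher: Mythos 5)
Your decomposition into the network average and the disagreement on $\mathcal{C}^{\perp}$ is exactly the paper's strategy (Lemmas \ref{le:l2} and \ref{le:l1} analyze $\mathbf{x}(t)-\mathbf{1}_{N}\otimes\mathbf{x}_{\mbox{avg}}(t)$ and $\mathbf{x}_{\mbox{avg}}(t)-\btheta$ separately), and your identification of the residual Laplacian $\widetilde{\mathbf{L}}(t)$ as the extra time scale is the right diagnosis. However, there are two concrete gaps. First, your route to the rate statement --- uniform $(2+\epsilon_{1})$-th moment bounds, $\mathbb{E}[\|\mathbf{z}(t)\|^{2+\epsilon_{1}}]=O(t^{-(2+\epsilon_{1})\tau'})$, then Markov plus Borel--Cantelli --- cannot deliver the full range $\tau\in[0,1/2)$. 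Summability of $\mathbb{P}\bigl((t+1)^{\tau}\|\mathbf{z}(t)\|>\varepsilon\bigr)\leq \varepsilon^{-(2+\epsilon_{1})}(t+1)^{(2+\epsilon_{1})\tau}\mathbb{E}[\|\mathbf{z}(t)\|^{2+\epsilon_{1}}]$ requires $(2+\epsilon_{1})(\tau'-\tau)>1$, i.e.\ $\tau<\tau'-(2+\epsilon_{1})^{-1}<1/2-(2+\epsilon_{1})^{-1}$, so you lose exactly the $(2+\epsilon_{1})^{-1}$ that the theorem does not lose. The paper instead argues pathwise: Egorov's theorem replaces the coupling term $\mathbf{U}_{t}$ by a uniformly small version on sets of probability exceeding $1-\delta$, the martingale cross term $\alpha_{t}(\mathbf{z}_{t}^{\delta})^{\top}(\mathbf{I}-\alpha_{t}\mathbf{\Gamma})\mathbf{J}_{t}$ is summed via the Dubins--Freedman criterion (Lemma \ref{l34}), and Fabian's deterministic rate lemma (Lemma \ref{l33}) is applied to $\|\mathbf{z}_{t}^{\delta}\|^{2}$, followed by an induction that boosts any achieved exponent $\overline{\tau}<1/2$ to a strictly larger one; this exhausts $[0,1/2)$. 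The condition $\tau_{1}\leq 1/2-(2+\epsilon_{1})^{-1}$ enters only through Lemma \ref{le:l1.1}, which gives the disagreement decay $(t+1)^{\delta_{0}}\|\mathbf{x}(t)-\mathbf{1}_{N}\otimes\mathbf{x}_{\mbox{avg}}(t)\|\to 0$ for $\delta_{0}<1-\tau_{1}-(2+\epsilon_{1})^{-1}$, and one needs $\delta_{0}>1/2$ there; so your intuition about where the restriction comes from is broadly right, but the mechanism is a pathwise rate for the disagreement, not a moment propagation.

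Second, your argument for plain consistency over the whole range $0<\tau_{1}\leq 1$ leans on $\alpha_{t}=o(\beta_{t})$, which fails at the endpoint $\tau_{1}=1$ where $\alpha_{t}$ and $\beta_{t}$ are of the same order; there the disagreement recursion is perturbed by a term of the same order as its contraction, and a contraction argument on $\mathcal{C}^{\perp}$ alone only yields boundedness rather than decay (cf.\ the $\delta_{1}=\delta_{2}$ case of Lemma \ref{int_res_0}). The paper does not decompose for this part: Lemma \ref{le:conv} runs a single Lyapunov argument on $V(t)=\|\mathbf{x}(t)-\mathbf{1}_{N}\otimes\btheta\|^{2}$, using that $\beta_{t}/\alpha_{t}$ is bounded below and that $\overline{\mathbf{L}}\otimes\mathbf{I}_{M}+\mathbf{G}_{H}\mathbf{\Sigma}^{-1}\mathbf{G}_{H}^{\top}$ is positive definite under Assumptions \ref{m:2}--\ref{m:3}, so the combined consensus-plus-innovation term contracts at rate $c\,\alpha_{t}$ on all of $\mathbb{R}^{NM}$; consistency for every $\tau_{1}\in(0,1]$ then follows from Lemma \ref{int_res_0} and Fatou's lemma. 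You would need either this joint Lyapunov argument or a separate treatment of the $\tau_{1}=1$ case to close your version.
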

\noindent At this point, the estimate sequence generated by $\mathcal{CREDO}$ at any worker $n$ is strongly consistent, i.e., $\mathbf{x}_{n}(t)\rightarrow\btheta$ almost surely~(a.s.) as $t\rightarrow\infty$. Furthermore, the above characterization for $0<\tau_{1}\leq 0.5-(2+\epsilon_{1})^{-1}$ yields order-optimal convergence, i.e., from results in classical estimation theory, it is known that there exists no $\tau\geq 1/2$ such that a estimator $\{\btheta_{c}(t)\}$ satisfies $(t+1)^{\tau}\|\btheta_{c}(t)-\btheta\|\rightarrow 0$ a.s. as $t\rightarrow\infty$.
We now state a main result of this paper which establishes the MSE communication rate for the proposed algorithm $\mathcal{CREDO}$.
\begin{Theorem}
	\label{th:credo}
	Let the hypothesis of Theorem \ref{th:cons} hold. Then, we have,
	\begin{align}
		\label{eq:credo_1}
		\mathbb{E}_{\btheta}\left[\left\|\mathbf{x}_{n}(t)-\btheta\right\|^{2}\right] = \Theta\left(\mathcal{C}_{t}^{-\frac{2}{\epsilon-\tau_{1}+2}}\right),
		\end{align}
	where $\epsilon < \tau_{1}$ and is as defined in \eqref{eq:time_decay}.
\end{Theorem}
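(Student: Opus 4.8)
The plan is to reduce \eqref{eq:credo_1} to the sharper, sample-indexed statement $\mathbb{E}_{\btheta}[\|\mathbf{x}_{n}(t)-\btheta\|^{2}]=\Theta(1/t)$ and then substitute the communication-cost scaling $\mathcal{C}_{t}=\Theta(t^{1+(\epsilon-\tau_{1})/2})$ derived at the end of Section~3. Writing $p=1+(\epsilon-\tau_{1})/2=(2+\epsilon-\tau_{1})/2$, so that $\mathcal{C}_{t}=\Theta(t^{p})$ and hence $t=\Theta(\mathcal{C}_{t}^{1/p})$, the sensing-rate bound converts directly into $\mathbb{E}_{\btheta}[\|\mathbf{x}_{n}(t)-\btheta\|^{2}]=\Theta(\mathcal{C}_{t}^{-1/p})=\Theta(\mathcal{C}_{t}^{-2/(2+\epsilon-\tau_{1})})$, which is exactly the claim. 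Note that $\epsilon<\tau_{1}$ forces $p<1$, so the exponent exceeds one, reflecting the improvement over the $\Theta(1/\mathcal{C}_{t})$ benchmark. The entire burden therefore falls on the two-sided bound $\mathbb{E}_{\btheta}[\|\mathbf{x}_{n}(t)-\btheta\|^{2}]=\Theta(1/t)$, which is \emph{not} delivered by Theorem~\ref{th:cons}: its pathwise rate $(t+1)^{\tau}\|\mathbf{x}_{n}(t)-\btheta\|\to 0$ holds only for $\tau<1/2$ and thus falls just short of a second-moment $1/t$ bound.

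\textbf{Upper bound $O(1/t)$.} I would work with the stacked error $\mathbf{z}(t)=\mathbf{x}(t)-\mathbf{1}_{N}\otimes\btheta$, which from \eqref{eq:ci_update} and $\mathbf{y}(t)=\mathbf{G}_{H}^{\top}(\mathbf{1}_{N}\otimes\btheta)+\boldsymbol{\gamma}(t)$ satisfies
\[ \mathbf{z}(t+1)=\left(\mathbf{I}_{NM}-\mathbf{L}(t)\otimes\mathbf{I}_{M}-\alpha_{t}\mathbf{G}_{H}\mathbf{\Sigma}^{-1}\mathbf{G}_{H}^{\top}\right)\mathbf{z}(t)+\alpha_{t}\mathbf{G}_{H}\mathbf{\Sigma}^{-1}\boldsymbol{\gamma}(t), \]
where $\boldsymbol{\gamma}(t)$ is the stacked noise. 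I would split $\mathbf{z}(t)$ into its projection onto the consensus subspace $\mathcal{S}=\{\mathbf{1}_{N}\otimes\mathbf{a}\}$, equivalently the averaged error $\overline{\mathbf{z}}(t)=\tfrac1N(\mathbf{1}_{N}^{\top}\otimes\mathbf{I}_{M})\mathbf{z}(t)$, and the orthogonal disagreement $\mathbf{z}_{\perp}(t)$. On $\mathcal{S}$ the Laplacian term annihilates (since $\mathbf{1}_{N}^{\top}\mathbf{L}(t)=\mathbf{0}$), and the averaged recursion reduces to $\overline{\mathbf{z}}(t+1)\approx(\mathbf{I}_{M}-\alpha_{t}\mathbf{\Gamma})\overline{\mathbf{z}}(t)+\alpha_{t}(\text{noise})$, i.e.\ the centralized recursion driven by the global Gramian $\mathbf{\Gamma}$ with gain $\alpha_{t}=a/(t+1)$. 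The condition $a\lambda_{\min}(\mathbf{\Gamma})\ge1$ from Assumption~\ref{m:4} is precisely the contraction threshold that lets the standard $1/t$-recursion lemma yield $\mathbb{E}\|\overline{\mathbf{z}}(t)\|^{2}=O(1/t)$, provided the disagreement and the noise cross-terms are controlled.

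\textbf{The main obstacle: three-time-scale control of the disagreement.} The crux is bounding $\mathbb{E}\|\mathbf{z}_{\perp}(t)\|^{2}$ despite three coexisting time-scales: the innovation gain $\alpha_{t}\sim t^{-1}$, the decaying mean consensus weight $\beta_{t}\sim t^{-\tau_{1}}$, and the residual Laplacian $\widetilde{\mathbf{L}}(t)$, a multiplicative noise with $\mathbb{E}\|\widetilde{\mathbf{L}}(t)\|^{2}=O(t^{-(\tau_{1}+\epsilon)})$ by \eqref{eq:laplace_res}. On $\mathcal{S}^{\perp}$ the contraction is governed by $\beta_{t}\lambda_{2}(\overline{\mathbf{L}})$, whose increments sum to $\sum_{s}\beta_{s}=\Theta(t^{1-\tau_{1}})\to\infty$, so agreement is enforced; the delicate point is that this contraction \emph{weakens} over time while $\widetilde{\mathbf{L}}(t)$ keeps injecting fluctuations, so a naive Gronwall bound does not close. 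I would derive a coupled pair of recursions for $\mathbb{E}\|\mathbf{z}_{\perp}(t)\|^{2}$ and $\mathbb{E}\|\overline{\mathbf{z}}(t)\|^{2}$, absorb the $\widetilde{\mathbf{L}}(t)$-contributions as higher-order using its decaying variance together with independence across time (so its cross-terms with the past-measurable $\mathbf{z}(t)$ vanish in expectation), and invoke the generalized time-varying-decay recursion tool alluded to in Remark~\ref{rm:2} to conclude $\mathbb{E}\|\mathbf{z}_{\perp}(t)\|^{2}=o(1/t)$. Feeding this back shows the disagreement contributes only lower-order terms, giving $\mathbb{E}_{\btheta}[\|\mathbf{x}_{n}(t)-\btheta\|^{2}]=O(1/t)$ for every node $n$. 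I expect this step to be the principal technical hurdle.

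\textbf{Lower bound and conclusion.} For the matching lower bound I would argue that $\mathbf{x}_{n}(t)$ is a measurable function of the observations collected up to time $t$, whose total Fisher information grows linearly as $t\,\mathbf{G}$; since the estimator is asymptotically unbiased by Theorem~\ref{th:cons}, a Cram\'er--Rao argument bounds its covariance below by $(1-o(1))(t\mathbf{G})^{-1}$, i.e.\ the oracle covariance $(N\mathbf{\Gamma})^{-1}/t=\mathbf{G}^{-1}/t$ (which the excerpt already exhibits as the oracle's asymptotic covariance) cannot be beaten, so $\mathbb{E}_{\btheta}[\|\mathbf{x}_{n}(t)-\btheta\|^{2}]=\Omega(1/t)$. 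Combined with the $O(1/t)$ upper bound this yields $\mathbb{E}_{\btheta}[\|\mathbf{x}_{n}(t)-\btheta\|^{2}]=\Theta(1/t)$, and the conversion of the first paragraph then delivers \eqref{eq:credo_1}. (Alternatively, once the asymptotic normality of $\mathcal{CREDO}$ is in hand, the $O(1/t)$ upper bound furnishes uniform integrability of $t\|\mathbf{x}_{n}(t)-\btheta\|^{2}$, whence $t\,\mathbb{E}_{\btheta}[\|\mathbf{x}_{n}(t)-\btheta\|^{2}]\to\mathrm{tr}(\text{asymptotic covariance})>0$, giving both bounds at once.) The conversion and the lower bound are comparatively routine relative to the three-time-scale disagreement estimate.
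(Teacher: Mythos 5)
Your overall strategy---prove $\mathbb{E}_{\btheta}\left[\left\|\mathbf{x}_{n}(t)-\btheta\right\|^{2}\right]=\Theta(1/t)$ in the sample index and then substitute $\mathcal{C}_{t}=\Theta\left(t^{1+(\epsilon-\tau_{1})/2}\right)$---is exactly the paper's, and your conversion arithmetic is correct. Where you diverge is the route to the $O(1/t)$ upper bound. You propose the consensus/disagreement decomposition with coupled recursions for $\mathbb{E}\|\overline{\mathbf{z}}(t)\|^{2}$ and $\mathbb{E}\|\mathbf{z}_{\perp}(t)\|^{2}$, treating the three time scales separately; this is workable (the residual Laplacian's variance $O(t^{-\tau_{1}-\epsilon})$ is indeed dominated by the mean contraction $\beta_{t}\sim t^{-\tau_{1}}$, and the paper itself uses this decomposition for Theorems \ref{th:cons} and \ref{th:2}), but it is heavier than necessary here. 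The paper instead runs a single Lyapunov recursion on the full stacked error $V(t)=\|\mathbf{z}(t)\|^{2}$, exploiting the observation that once $\beta_{t}\geq\alpha_{t}$ one has $\beta_{t}\left(\overline{\mathbf{L}}\otimes\mathbf{I}_{M}\right)+\alpha_{t}\mathbf{G}_{H}\mathbf{\Sigma}^{-1}\mathbf{G}_{H}^{\top}\succeq\alpha_{t}\left(\overline{\mathbf{L}}\otimes\mathbf{I}_{M}+\mathbf{G}_{H}\mathbf{\Sigma}^{-1}\mathbf{G}_{H}^{\top}\right)\succeq c_{4}\alpha_{t}\mathbf{I}$, with $c_{4}>0$ by global observability plus connectivity; this collapses the problem to the scalar recursion $\mathbb{E}[V(t+1)]\le(1-c_{4}\alpha_{t})\mathbb{E}[V(t)]+c_{10}\alpha_{t}(t+1)^{-1}$, and Lemma \ref{l33} (with Assumption \ref{m:4} supplying $c_{4}a\geq1$) gives $O(1/t)$ in two lines, with no subspace splitting and no separate treatment of $\widetilde{\mathbf{L}}(t)$. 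Your approach buys a sharper statement ($\mathbb{E}\|\mathbf{z}_{\perp}(t)\|^{2}=o(1/t)$, hence the disagreement is asymptotically negligible), the paper's buys brevity. One further point in your favor: the paper's written proof establishes only the $O(1/t)$ upper bound and then asserts the $\Theta$; you correctly flag that a matching lower bound is needed. Of your two suggestions, prefer the parenthetical one (asymptotic normality from Theorem \ref{th:2} plus uniform integrability of $t\|\mathbf{x}_{n}(t)-\btheta\|^{2}$, which the $O(1/t)$ moment bound nearly supplies); the Cram\'er--Rao sketch as stated applies to unbiased estimators and would need extra care since $\mathbf{x}_{n}(t)$ is biased at finite $t$.
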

The version of the $\mathcal{CREDO}$ algorithm, with $\beta_{t} = a(t+1)^{-1}$, achieves a communication cost of $\mathcal{C}_{t} = \Theta\left(t^{0.5(1+\epsilon)}\right)$. Hence, the MSE as a function of $\mathcal{C}_{t}$ in the case of $\tau_{1}=1$ is given by $\mbox{MSE} = \Theta(\mathcal{C}_{t}^{-2/(1+\epsilon)})$. However, it can be shown from standard arguments in stochastic approximation that updates with $\beta_{t} = a(t+1)^{-1-\delta}$ with $\delta>0$, though results in a communication cost of $\mathcal{C}_{t} = \Theta(t^{0.5(1+\epsilon-\delta)})$, it does not generate estimate sequences which converge to $\btheta$.\\
\noindent With the above development in place, we state a result which allows us to benchmark the asymptotic efficiency of the proposed algorithm and the instantiations of it in terms of $\tau_{1}$. To be specific, the next result establishes the asymptotic normality of the parameter estimate sequence $\{\mathbf{x}_{n}(t)\}$ and characterizes the asymptotic covariance of the estimate sequence.
\begin{Theorem}
	\label{th:2}
	Let the hypotheses of Theorem \ref{th:cons} hold and in addition let $0<\tau_{1}\leq 0.5-(2+\epsilon_{1})^{-1}$. Then, we have,
	\begin{align}
		\label{eq:l3_1}
		\sqrt{t+1}\left(\mathbf{x}_{n}(t)-\btheta\right)\overset{\mathcal{D}}{\Longrightarrow}\mathcal{N}\left(0,\frac{a\mathbf{I}}{2N}+\frac{\left(\mathbf{\Gamma}-\frac{\mathbf{I}}{2a}\right)^{-1}}{4N}\right),
		\end{align}
	where $\mathbf{\Gamma}=\frac{1}{N}\sum_{n=1}^{N}\mathbf{H}_{n}^{\top}\mathbf{\Sigma}_{n}^{-1}\mathbf{H}_{n}$.
\end{Theorem}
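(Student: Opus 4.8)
The plan is to reduce the asymptotic normality of the per-worker sequence to a centralized Robbins--Monro recursion driven by the network-averaged error, and to show that the inter-worker disagreement is asymptotically negligible at the $\sqrt{t+1}$ scale. First I would decompose the error as $\mathbf{x}_{n}(t)-\btheta = (\overline{\mathbf{x}}(t)-\btheta) + \mathbf{d}_{n}(t)$, where $\overline{\mathbf{x}}(t)=\frac{1}{N}\sum_{m=1}^{N}\mathbf{x}_{m}(t)$ is the network average and $\mathbf{d}_{n}(t)=\mathbf{x}_{n}(t)-\overline{\mathbf{x}}(t)$ is the block of the disagreement $\mathbf{d}(t)=(\mathbf{P}\otimes\mathbf{I}_{M})\mathbf{x}(t)$, $\mathbf{P}=\mathbf{I}_{N}-\frac{1}{N}\mathbf{1}_{N}\mathbf{1}_{N}^{\top}$. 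Starting from the compact recursion \eqref{eq:ci_update} and using $(\mathbf{1}_{N}^{\top}\otimes\mathbf{I}_{M})(\mathbf{L}(t)\otimes\mathbf{I}_{M})=\mathbf{0}$ (every Laplacian has zero row sums), the consensus term drops out of the average; substituting $\mathbf{y}_{n}(t)=\mathbf{H}_{n}\btheta+\gamma_{n}(t)$ then gives
\begin{align*}
\overline{\mathbf{x}}(t+1)-\btheta = \left(\mathbf{I}-\alpha_{t}\mathbf{\Gamma}\right)\left(\overline{\mathbf{x}}(t)-\btheta\right) - \alpha_{t}\mathbf{R}(t) + \alpha_{t}\mathbf{U}(t),
\end{align*}
where $\mathbf{U}(t)=\frac{1}{N}\sum_{n}\mathbf{H}_{n}^{\top}\mathbf{\Sigma}_{n}^{-1}\gamma_{n}(t)$ is a martingale-difference sequence with conditional covariance $\frac{1}{N}\mathbf{\Gamma}$, and $\mathbf{R}(t)=\frac{1}{N}\sum_{n}\mathbf{H}_{n}^{\top}\mathbf{\Sigma}_{n}^{-1}\mathbf{H}_{n}\,\mathbf{d}_{n}(t)$ collects the coupling to the disagreement. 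This is the centralized oracle recursion perturbed only by $\mathbf{R}(t)$.

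The next step is to control $\mathbf{d}(t)$ on $\PC$. On this subspace the mean consensus operator contracts at rate $\beta_{t}\lambda_{2}(\overline{\mathbf{L}})\sim t^{-\tau_{1}}$, while the heterogeneous innovations inject a forcing of order $\alpha_{t}\sim t^{-1}$; balancing the two suggests $\mathbb{E}[\|\mathbf{d}(t)\|^{2}]=O\!\left((\alpha_{t}/\beta_{t})^{2}\right)=O\!\left(t^{-2(1-\tau_{1})}\right)$, which for $\tau_{1}<1/2$ (guaranteed by the hypothesis $\tau_{1}\le 0.5-(2+\epsilon_{1})^{-1}$) is $o(t^{-1})$. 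I would establish this moment bound --- in effect the disagreement already controlled in the proofs of Theorems~\ref{th:cons} and~\ref{th:credo} --- and conclude $\sqrt{t+1}\,\mathbf{d}(t)\to\mathbf{0}$ in probability, and that the accumulated contribution of the coupling term $\mathbf{R}(s)$ through the averaged recursion is likewise $o(t^{-1/2})$; hence neither the disagreement nor $\mathbf{R}(t)$ affects the limiting law.

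With the disagreement negligible, the averaged recursion is a standard stochastic approximation of Robbins--Monro type with gain $\alpha_{t}=a/(t+1)$, constant drift $a\mathbf{\Gamma}$, and martingale-difference noise $\mathbf{U}(t)$, so I would invoke a classical stochastic-approximation central limit theorem (see, e.g., \citet{Nevelson}). Assumption~\ref{m:4} gives $a\,\lambda_{min}(\mathbf{\Gamma})\ge 1>1/2$, so $a\mathbf{\Gamma}-\frac{1}{2}\mathbf{I}$ is positive definite (the stability condition); Assumption~\ref{m:1} supplies the $(2+\epsilon_{1})$-moment bound needed for the Lindeberg condition on $\mathbf{U}(t)$; and the conditional covariance of $\mathbf{U}(t)$ converges to $\frac{1}{N}\mathbf{\Gamma}$. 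The theorem then yields $\sqrt{t+1}(\overline{\mathbf{x}}(t)-\btheta)\overset{\mathcal{D}}{\Longrightarrow}\mathcal{N}(\mathbf{0},\mathbf{S}_{0})$, where $\mathbf{S}_{0}$ solves the Lyapunov equation
\begin{align*}
\left(a\mathbf{\Gamma}-\tfrac{1}{2}\mathbf{I}\right)\mathbf{S}_{0}+\mathbf{S}_{0}\left(a\mathbf{\Gamma}-\tfrac{1}{2}\mathbf{I}\right)=\frac{a^{2}}{N}\mathbf{\Gamma}.
\end{align*}
Diagonalizing in the eigenbasis of the symmetric matrix $\mathbf{\Gamma}$ and solving scalarwise gives $\mathbf{S}_{0}=\frac{a\mathbf{I}}{2N}+\frac{1}{4N}\left(\mathbf{\Gamma}-\frac{\mathbf{I}}{2a}\right)^{-1}$, matching \eqref{eq:l3_1}. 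Finally, Slutsky's theorem combined with $\sqrt{t+1}\,\mathbf{d}_{n}(t)\to\mathbf{0}$ transfers the limit from $\overline{\mathbf{x}}(t)$ to each $\mathbf{x}_{n}(t)$.

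I expect the main obstacle to be the disagreement bound of the second step, precisely because of the three time-scale structure highlighted in Remarks~\ref{rm:1}--\ref{rm:2}: the residual Laplacian $\widetilde{\mathbf{L}}(t)$ enters multiplicatively and, crucially, does not scale with $\beta_{t}$, so the contraction of $\mathbf{d}(t)$ toward $\PC$ must be extracted from a time-varying random operator whose fluctuation decays only at rate $t^{-(\tau_{1}+\epsilon)}$. Although $\widetilde{\mathbf{L}}(t)\mathbf{1}_{N}=\mathbf{0}$ keeps it from injecting the consensus component into the disagreement, showing that this multiplicative noise does not degrade the $O(t^{-2(1-\tau_{1})})$ rate --- and in particular verifying that the disagreement is $o(t^{-1})$ in the moment sense needed for the $\sqrt{t+1}$ rescaling --- is the technically delicate part where the paper's new machinery is required; the averaged-recursion CLT and the Lyapunov computation are then comparatively routine.
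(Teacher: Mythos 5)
Your proposal is correct and follows essentially the same route as the paper: decompose the error into the network average plus the per-node disagreement, show the disagreement vanishes at the $\sqrt{t+1}$ scale (the paper's Lemma~\ref{le:l2}), apply a stochastic-approximation CLT to the averaged recursion, and transfer the limit to each node by Slutsky. The only cosmetic difference is that the paper absorbs the vanishing coupling term as the $(t+1)^{-3/2}\mathbf{T}_{t}$ perturbation inside Fabian's theorem (Lemma~\ref{main_res_l0}) rather than invoking a classical Robbins--Monro CLT with a separate perturbation estimate, and your Lyapunov-equation computation of the covariance agrees with the paper's eigen-decomposition calculation.
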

\noindent The asymptotic covariance as established in \eqref{eq:l3_1} is independent of the network. Technically speaking, as long as the averaged Laplacian $\overline{\mathbf{L}}$ is connected, and the consensus and the innovation potentials, i.e., $\beta_{t}$ and $\alpha_{t}$ respectively are chosen appropriately, the asymptotic covariance is independent of the network connectivity, i.e., it is independent of the network instantiations across all times and is just a function of the sensing model parameters and the noise covariance. It is to be noted that the optimal asymptotic covariance achieved by the oracle estimator is given by $N\mathbf{\Gamma}$. Such an asymptotic covariance can be achieved by a distributed setup where every worker node is aware of every other worker node's sensing model. To be particular, if a gain matrix $G = \sum_{n=1}^{N}N^{-1}\mathbf{H}_{n}^{\top}\mathbf{\Sigma}_{n}^{-1}\mathbf{H}_{n}$ is multiplied to the innovation term of the update in \eqref{eq:ci_update_worker}, the optimal asymptotic covariance is achievable~(see, for example \citet{kar2013distributed}). However, such an update would need global model information available at each worker node.\\
\noindent We now discuss the interesting trade-offs between the communication cost and the asymptotic covariance that follow from Theorem \ref{th:2} and some existing results \citet{kar2012distributed,kar2013distributed}~(see Table \ref{tab:1}). At this juncture, we consider the setup, where the $\tau_{1}$ in the consensus potential $\beta_{t}$ in \eqref{eq:beta} is taken to be $1/2-(2+\epsilon_{1})^{-1}\le \tau_{1} \leq 1$. We specifically consider the case where $\tau_{1}=1$. It has been established in prior work~(see, for example \citet{kar2012distributed}) that in this case the asymptotic covariance depends on the network instantiation. To be specific, the averaged Laplacian $\overline{\mathbf{L}}$ which abstracts out the time-averaged information flow among the worker nodes has a key role in the asymptotic covariance in such a case. However, such a scheme, i.e., a single time scale variant\footnote{Single time-scale in the sense that the weights sequences $\beta_{t}$ and $\alpha_{t}$ have the same asymptotic decay rate.} of the proposed algorithm~(in general for $1/2-(2+\epsilon_{1})^{-1}\le \tau_{1} \leq 1$) enjoys a lower communication rate. Technically speaking, for the case when $\tau_{1}=1$, the communication rate is given by $\mathcal{C}_{t}=\Theta\left(t^{0.5(1+\epsilon)}\right)$. Hence, there is an intrinsic trade-off between the communication rate and the achievable asymptotic variance.\\
Intuitively, the algorithm exhibits a threshold behavior in terms of the consensus potential $\tau_{1}$. The threshold behavior is summarized in table \ref{tab:1}.
\begin{table}[h]
	\centering
	\caption{Trade-off between Communication cost and Asymptotic Covariance}\label{tab:1}
	\begin{tabular}{ l  l  l  l }
	{\small\textbf{Trade-Off}} & {\small\textbf{Convergence}}&{\small\textbf{Asymptotic Covariance}}&{\small\textbf{Comm. Cost.}}\\
		\hline \\
		${\bf0<\tau_{1}<\frac{1}{2}-\frac{1}{2+\epsilon_{1}}}$ & \textbf{Consistent}& \textbf{Network Independent} & $\Theta\left(t^{\frac{3}{4}+\frac{\epsilon}{2}}\right)$ \\
		${\bf\frac{1}{2}-\frac{1}{2+\epsilon_{1}}\leq \tau_{1}\leq 1}$ & \textbf{Consistent} & Network Dependent & $\Theta\left(t^{\frac{1+\epsilon}{2}}\right)$ \\
		${\bf\tau_{1}>1}$ & Does not converge & Diverges & ${\bf\Theta\left(1\right)}$ \\
		\hline
	\end{tabular}
\end{table}In the case when, $\tau_{1} < 1/2-\frac{1}{2+\epsilon_{1}}$, the algorithm achieves a network independent asymptotic covariance while ensuring the communication rate to be strictly sub linear. However, in the case when $1/2-\frac{1}{2+\epsilon_{1}}\le \tau_{1} \leq 1$, the algorithm has a communication rate which is lower than the previous regime, but then achieves asymptotic covariance which depends on the network explicitly. Finally, in the case when $\tau_{1}>1$, the algorithm does not even converge to the true underlying parameter.

\section{Simulation Experiments}
This section corroborates our theoretical findings through simulation examples
and demonstrates  on both synthetic and real data sets communication efficiency of $\mathcal{CREDO}$.
Subsection~{5.1} considers synthetic data, while Subsection~{5.2} presents simulation results on real data sets.

\subsection{Synthetic Data}
Specifically, we compare the proposed communication-efficient distributed estimator, $\mathcal{CREDO}$, with the benchmark distributed recursive estimator in \eqref{eq:benchmark_ci} which utilizes all inter-neighbor communications at all times, i.e., has a linear communication cost. The example demonstrates that the proposed communication-efficient estimator matches the MSE rate of the benchmark estimator. The simulation also shows that the proposed estimator improves the MSE \emph{communication rate} with respect to the benchmark.
The simulation setup is as follows. We consider three instances of undirected graphs with $N=20$ nodes, with relative degrees\footnote{Relative degree is the ratio of the number of links in the graph to the number of possible links in the graph.} of nodes slated at $0.3736$, $0.5157$ and $0.6578$. The graphs were generated as connected graph instances of the random geometric graph model with radius $r=\sqrt{\mathrm{ln}(N)/N}$.
We set $M=10$ and $M_n=1$, for all $n=1,...,N$; i.e., the unknown parameter $\btheta\in\mathbb{R}^{10}$, while each node makes a scalar observation at each time~$t$. The noises $\gamma_n(t)$ are Gaussian and are i.i.d. both in time and across nodes and
have the covariance matrix equal to~$0.25 \times I$. The sampling matrices $\mathbf{H}_n$'s are chosen to be $2$-sparse, i.e., every nodes observes a linear combination of two arbitrary entries of the vector parameter. The non-zero entries of the $\mathbf{H}_{n}$'s are sampled from a standard normal distribution. The sampling matrices $\mathbf{H}_n$'s at the same time satisfy Assumption \ref{m:2}.
The parameters of the benchmark and the proposed estimator are as follows. The benchmark estimator's consensus weight is set to $0.1(t+1)^{-0.49}$. With the proposed estimator, we study the first two regimes as illustrated in Table \ref{tab:1}, i.e., $0<\tau_{1}<\frac{1}{2}$ and $\frac{1}{2}\leq \tau_{1}\leq 1$. For the second regime, we study two different cases. We set $\rho_t = 0.1(t+1)^{-0.01}$ for both the regimes. We set $\zeta_t = (t+1)^{-0.235}$, $\zeta_t = (t+1)^{-0.315}$  and $\zeta_t = (t+1)^{-0.49}$ for the above mentioned first and two cases of the second regime respectively; that is, with the proposed estimator, we set $\epsilon=0.01$, $\tau_1 = 0.49$, $\epsilon=0.01$, $\tau_1 = 0.65$ and $\epsilon=0.01$, $\tau_1 = 1$ for the first and two cases of the second regime respectively. Note that the Laplacian matrix associated with the benchmark estimator and the expected Laplacian matrix associated with the proposed estimator, $\mathcal{CREDO}$  are equal in each of the three generated networks, i.e., $\mathbf{\overline{L}} =\mathbf{L}$.
With all the three estimators, the innovation weight is set to $\alpha_t = (3.68(t+20))^{-1}$. Note that all the theoretical results in the paper hold unchanged for the ``time-shifted'' $\alpha_t$ used here.
The purpose of the shift in the innovation weight is to avoid large innovation weights in the initial iterations. As a performance metric, we use
the relative MSE estimate averaged across nodes:
\begin{align*}
\frac{1}{N}\sum_{n=1}^N \frac{ \|\mathbf{x}_n(t)-\btheta \|^2}{ \|\mathbf{x}_n(0)-\btheta\|^2},
\end{align*}
further averaged across $50$ independent runs of the three estimators. Here, $\mathbf{x}_n(0)$ is node $n$'s initial estimate. With both estimators, at each run, at all nodes, we set $\mathbf{x}_n(0)=0$. Figure~\ref{Figure_1} plots the estimated relative MSE versus time~$t$ in log-log scale for the three networks.
From figure \ref{Figure_1}, we can see that the MSE decay of the proposed estimator coincides with that of the benchmark estimator, especially in the $\tau_{1}=0.49$ regime across all the three networks, inspite of having lower communication costs. $\mathcal{CREDO}$ with $\tau_{1}=0.65$ and $\tau_{1}=1$, has higher convergence constants\footnote{It basically points to the fact that, though the MSE in all the cases have a $t^{-1}$ scaling, the variance decays of the $\tau_{1}=0.65$ and $\tau_{1}=1$ cases involve bigger constants and thus larger variances.} with respect to the MSE decay rates as compared to the benchmark estimator, though with far lower communication costs. We can also see that, for network $1$ and network $2$, with relative degree slated at $0.3136$ and $0.5157$ respectively, the MSE in the case of $\tau_{1}=0.65$ and $\tau_{1}=1$ shifts further away from the MSE curve of network $3$ and thus illustrating the network dependent convergence constant in the regime $1/2\le\tau_{1}\leq 1$. At the same time, from Figure \ref{Figure_1} it can be seen that with $\tau_{1}=0.49$, the convergence is practically independent of the network similar to the convergence of the benchmark estimator, as predicted by Theorem \ref{th:2}. Figure \ref{Figure_2} plots the estimated relative MSE versus average per-node communication cost~$C_t$. We can see that the proposed scheme has an improved communication rate with respect to the benchmark, as predicted by the theory. In spite of higher convergence constants with respect to the MSE decay rates, in the case of $\tau_{1}=0.65$ and $\tau_{1}=1$, the MSE decay rate in terms of the communication cost is still faster than the benchmark estimator. Also, in the case of $\tau_{1}=0.49$, there is a close to $10\times$ reduction in the communication cost for the same achievable relative MSE of $0.005$  as compared to the benchmark estimator.
Figure \ref{Figure_2}, illustrates the trade-off between the MSE decay rate and the communication cost, there in, the lowest communication cost enjoyed by $\mathcal{CREDO}$ results in higher convergence constant with respect to the MSE decay, while the lowest convergence constant with respect to the MSE decay rate enjoyed by the benchmark estimator results in the highest communication cost.
\begin{figure}[H]
	\centering
	\includegraphics[angle=-90,origin=c,height=4.0 in,width=4.0 in]{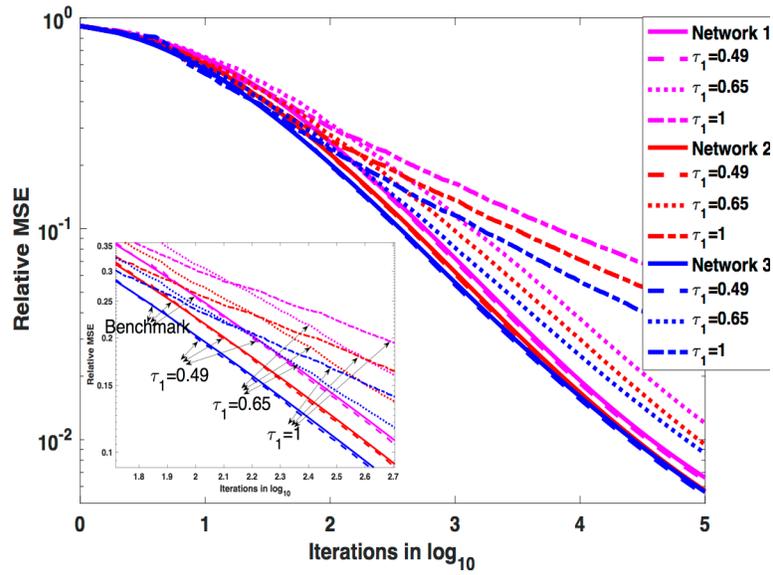}
	\caption{\emph{Comparison of the proposed and benchmark estimators in terms of relative MSE: Number of Iterations}. The solid lines represent the benchmark, the three different colors indicate the three different networks, while the three regimes are represented by the dotted lines.}
	\label{Figure_1}
\end{figure}
\begin{figure}[H]
	\centering
		\includegraphics[angle=-90,origin=c,height=4.0 in,width=4.0 in]{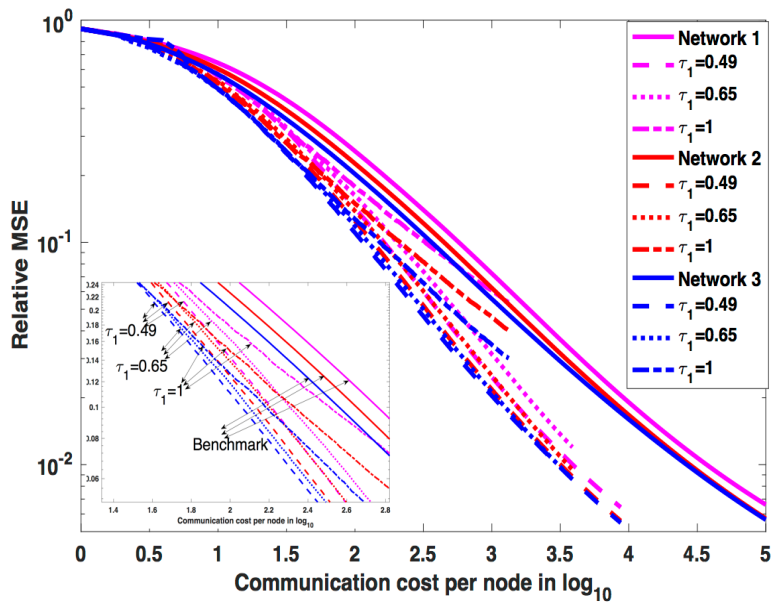}
		\caption{\emph{Comparison of
			the proposed and benchmark estimators in terms of relative MSE: Communication cost per node}. The solid lines represent the benchmark, the three different colors indicate the three different networks, while the three regimes are represented by the dotted lines.}
			\label{Figure_2}
	\end{figure}

\subsection{Real Datasets}
In order to evaluate the performance of $\mathcal{CREDO}$, we ran experiments on three real-world datasets, namely cadata~(\citet{LibSVM}), Abalone~(\citet{Lichman:2013}) and bank~(\citet{Delve}).\\
For the cadata dataset~(20640 data points, 8 features), we divided the samples into $20$ equal parts of $900$ data points each, after keeping $2640$ data points as the test set. For the 20 node network, we constructed a random geometric graph. For the Abalone dataset~(4177 data points, 8 features), we divided the samples into $10$ equal parts of $360$ points each, after keeping $577$ data points as the test set. For the 10 node network, we constructed a random geometric graph. For the bank dataset~(8192 data points, 9 features), we divided the samples into $20$ equal parts equal parts of $350$ points each, after keeping $1192$ data points as the test set. For the 20 node network, we constructed a random geometric graph. We added Gaussian noise to the dependent variables, i.e., housing price, the age of Abalone and fraction of rejecting customers respectively.
  The training datasets, with respect to the sensing model~\eqref{eq:sens_ob}, have dynamic regressors (a regressor here corresponds to a feature vector of one data point), i.e, time-varying $\mathbf{H}_{n}$'s for each agent $n$.
  Thus, we perform a pre-processing step where we average the training data points' regressors at each node to obtain an averaged $\overline{\mathbf{H}}_{n}$, which is then subsequently used at every iteration~$t$ in the update~\eqref{eq:ci_update_worker}. For each experiment (each dataset), a consistency check is done by ensuring that $\sum_{n=1}\overline{\mathbf{H}}_{n}^{\top}\mathbf{\Sigma}_{n}^{-1}\overline{\mathbf{H}}_{n}$ is invertible and thus global observability holds. As the number of data points at each node are the same, we sample
  along iterations~$t$ data points at each node without replacement, and thus the total number of iterations~$t$ we run the algorithms equals the number of data points at each node. In other words, the algorithm passes through each data point exactly once.
   We summarize the comparison of the number of communications needed by $\mathcal{CREDO}$ and the benchmark algorithm at the test error obtained after the total number of iterations in Table \ref{tab:2}. In particular, the test errors obtained in the cadata, abalone and the bank dataset are $0.015$, $0.03$ and $0.007$ of the initial test error, respectively. In figures \ref{fig:cahousing}, \ref{fig:abalone} and \ref{fig:bank}, we plot the evolution of the test error for each of the datasets as a function of the number of iterations and the communication cost. It can be seen that while $\mathcal{CREDO}$ matches the final test error of that of the benchmark algorithm, it requires on average thrice as less number of communications.

Note that the theoretical setup in this paper rigorously establishes results pertaining to observation models with static regressors, i.e., static sensing matrices. However, the simulations on the real world datasets show that in spite of the time-varying regressors, 
 the algorithm continues to demonstrate its improved communication efficiency over the benchmark. Moreover, as the sampling at each node is without replacement, the transients as far the performance is concerned can be improved by making the weight sequences decay after a few iterations instead of every iteration. Such a decay, while ensuring that the algorithm requirements are satisfied, would ensure faster assimilation of new data points in the transient phase.\\
\begin{figure}
	\centering
	\begin{subfigure}{.5\textwidth}
		\centering
		\includegraphics[width=.9\linewidth]{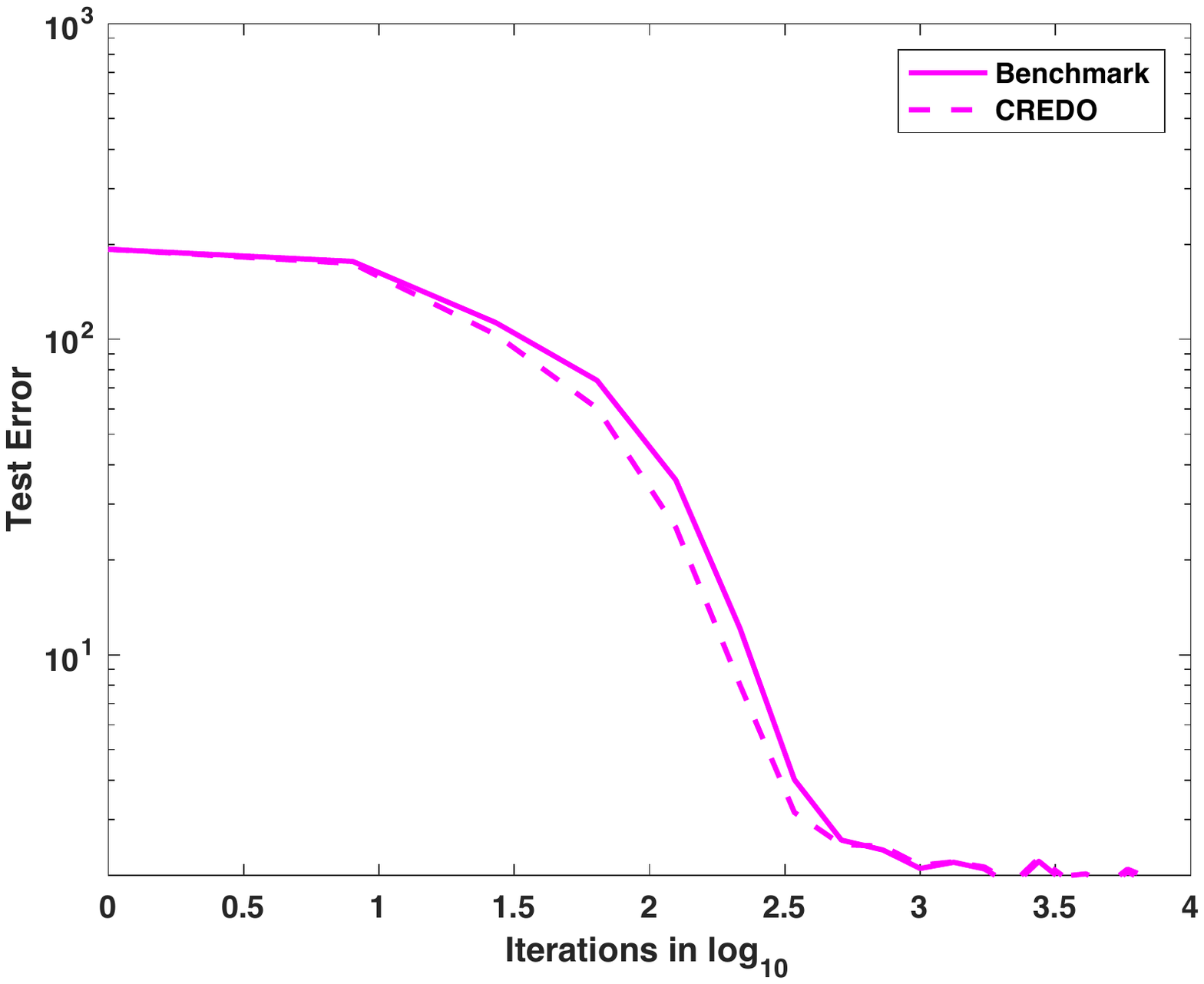}
		\caption{Comparison of Test Error: Number of Iterations}
		\label{fig:cahousing_mse}
	\end{subfigure}%
	\begin{subfigure}{.5\textwidth}
		\centering
		\includegraphics[width=.9\linewidth]{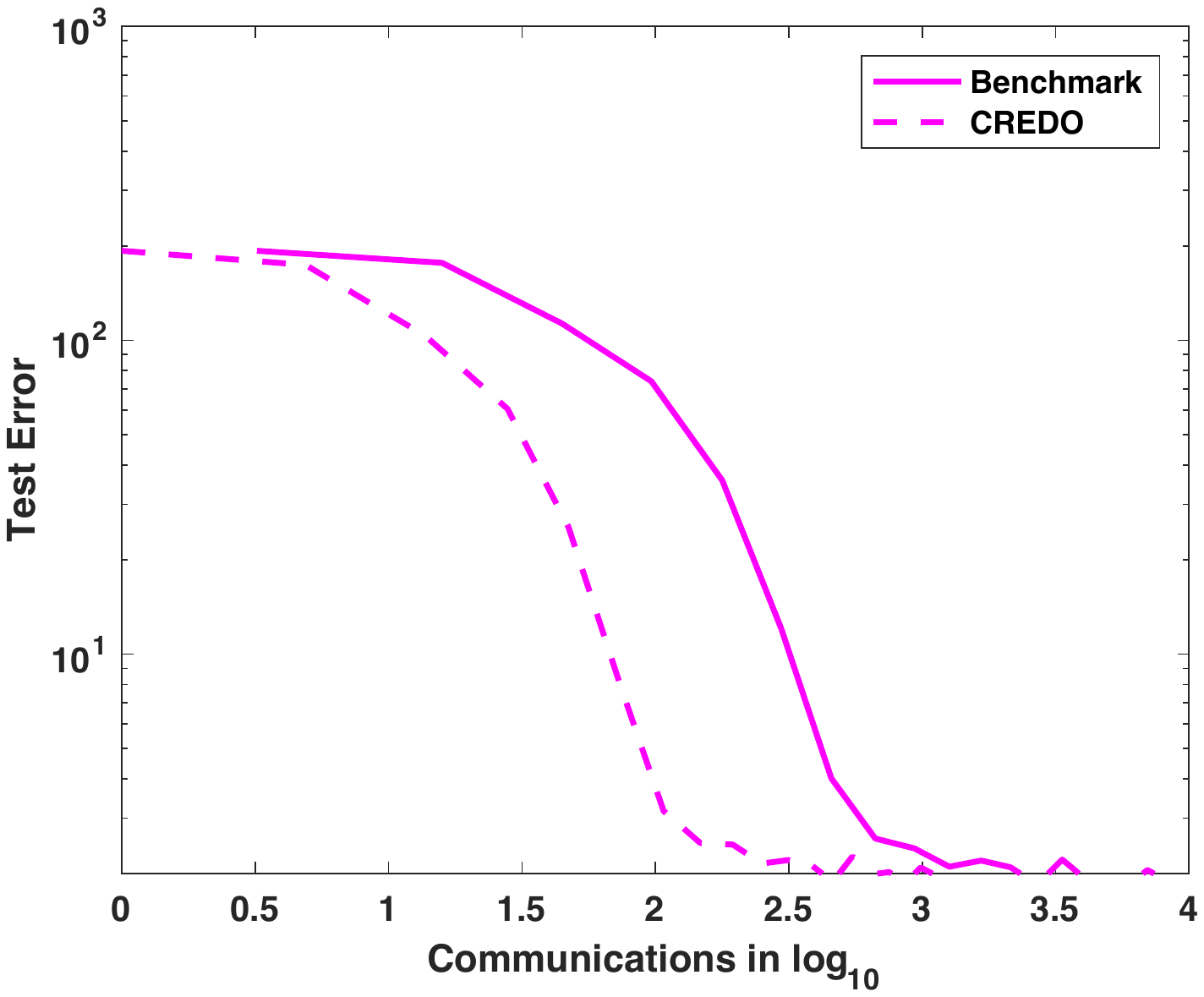}
		\caption{Comparison of Test Error: Communication cost per node}
		\label{fig:cahousing_comm_cost}
	\end{subfigure}
	\caption{CADATA Dataset: Comparison of
		the $\mathcal{CREDO}$ and benchmark estimators}
	\label{fig:cahousing}
\end{figure}
\begin{figure}
	\centering
	\begin{subfigure}{.5\textwidth}
		\centering
		\includegraphics[width=.9\linewidth]{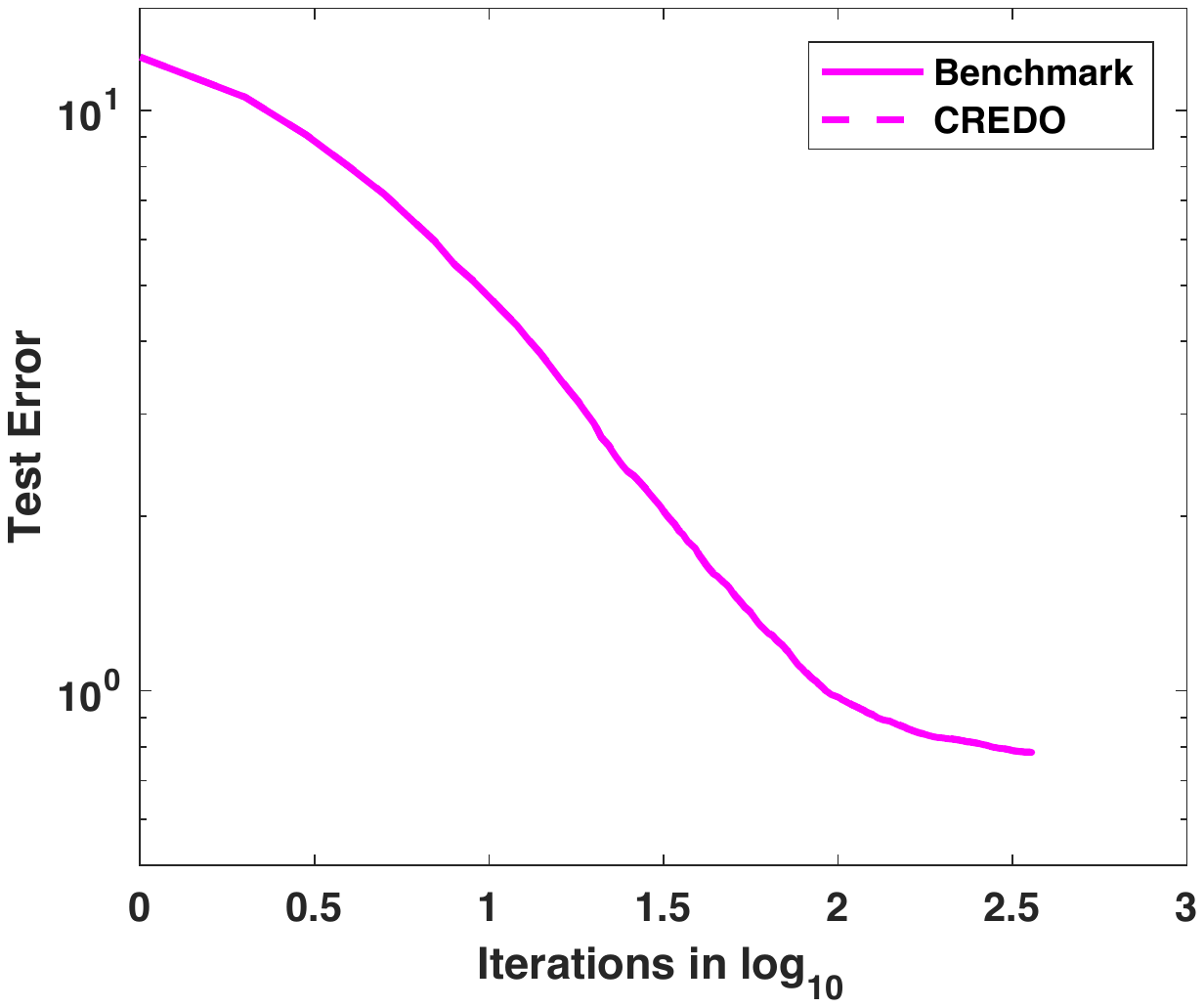}
		\caption{Comparison of Test Error: Number of Iterations}
		\label{fig:abalone_mse}
	\end{subfigure}%
	\begin{subfigure}{.5\textwidth}
		\centering
		\includegraphics[width=.9\linewidth]{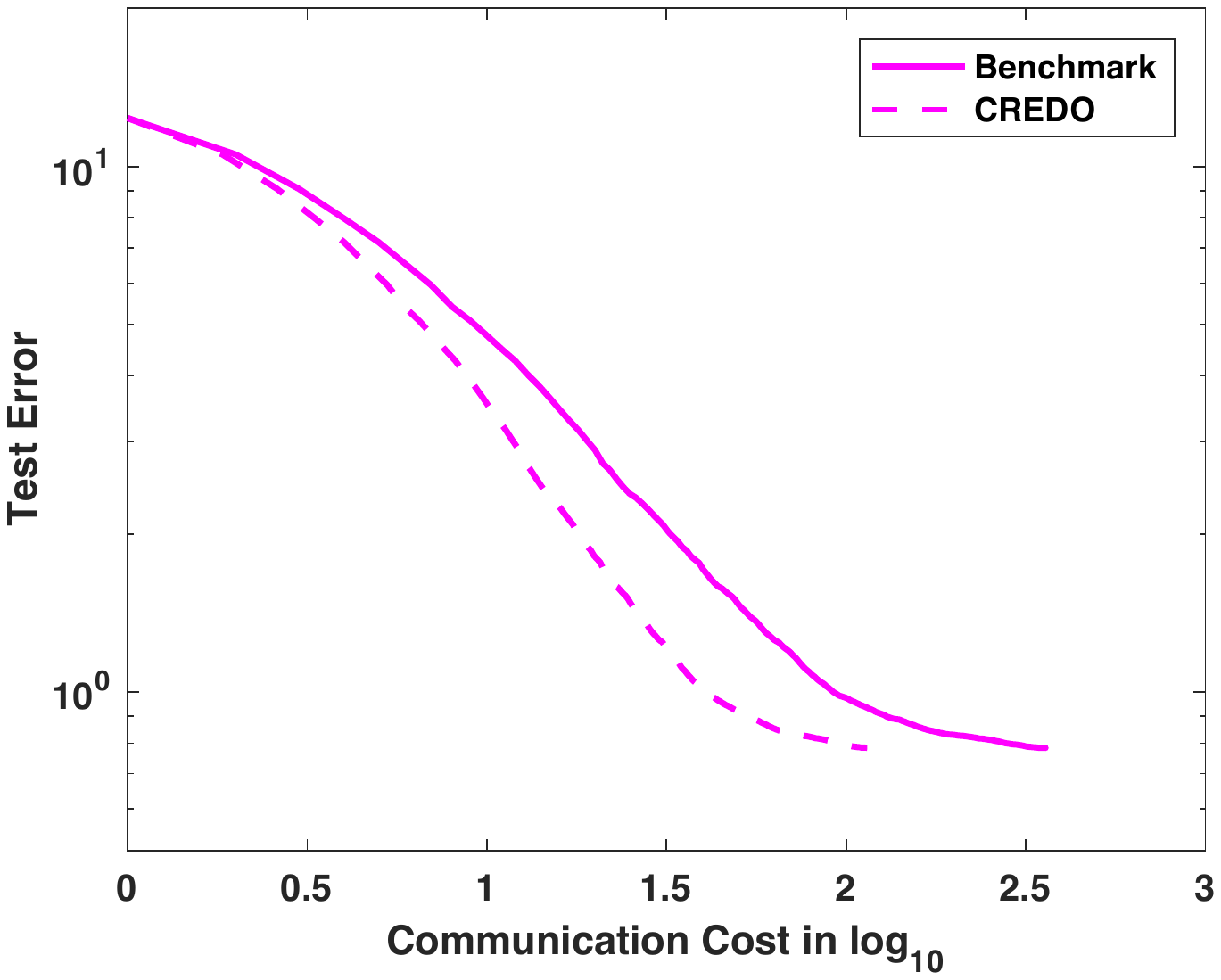}
		\caption{Comparison of Test Error: Communication cost per node}
		\label{fig:abalone_comm_cost}
	\end{subfigure}
	\caption{Abalone Dataset: Comparison of
		the $\mathcal{CREDO}$ and benchmark estimators}
	\label{fig:abalone}
\end{figure}
\begin{figure}
		\centering
		\begin{subfigure}{.5\textwidth}
			\centering
			\includegraphics[width=.9\linewidth]{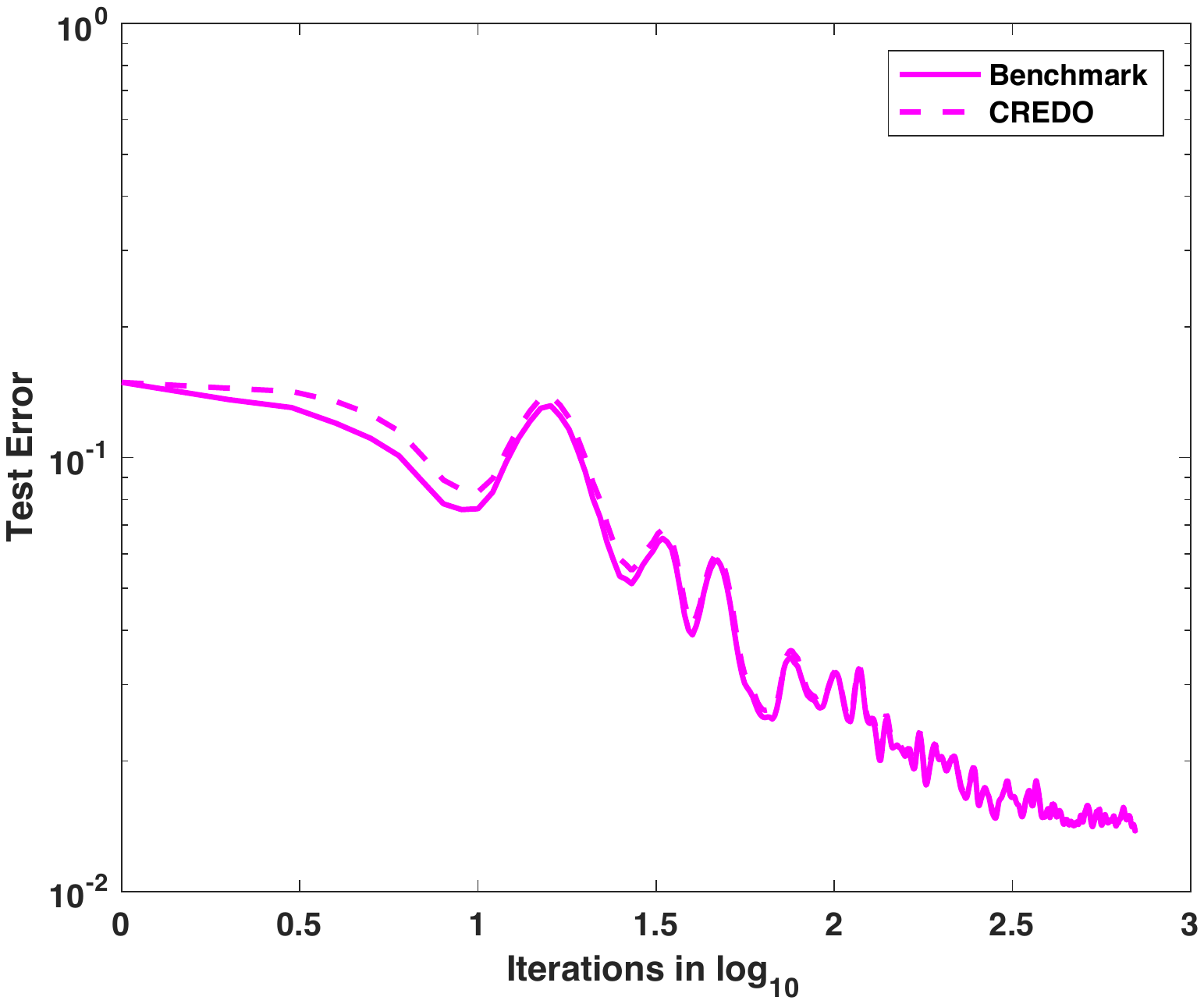}
			\caption{Comparison of Test Error: Number of Iterations}
			\label{fig:bank_mse}
		\end{subfigure}%
		\begin{subfigure}{.5\textwidth}
			\centering
			\includegraphics[width=.9\linewidth]{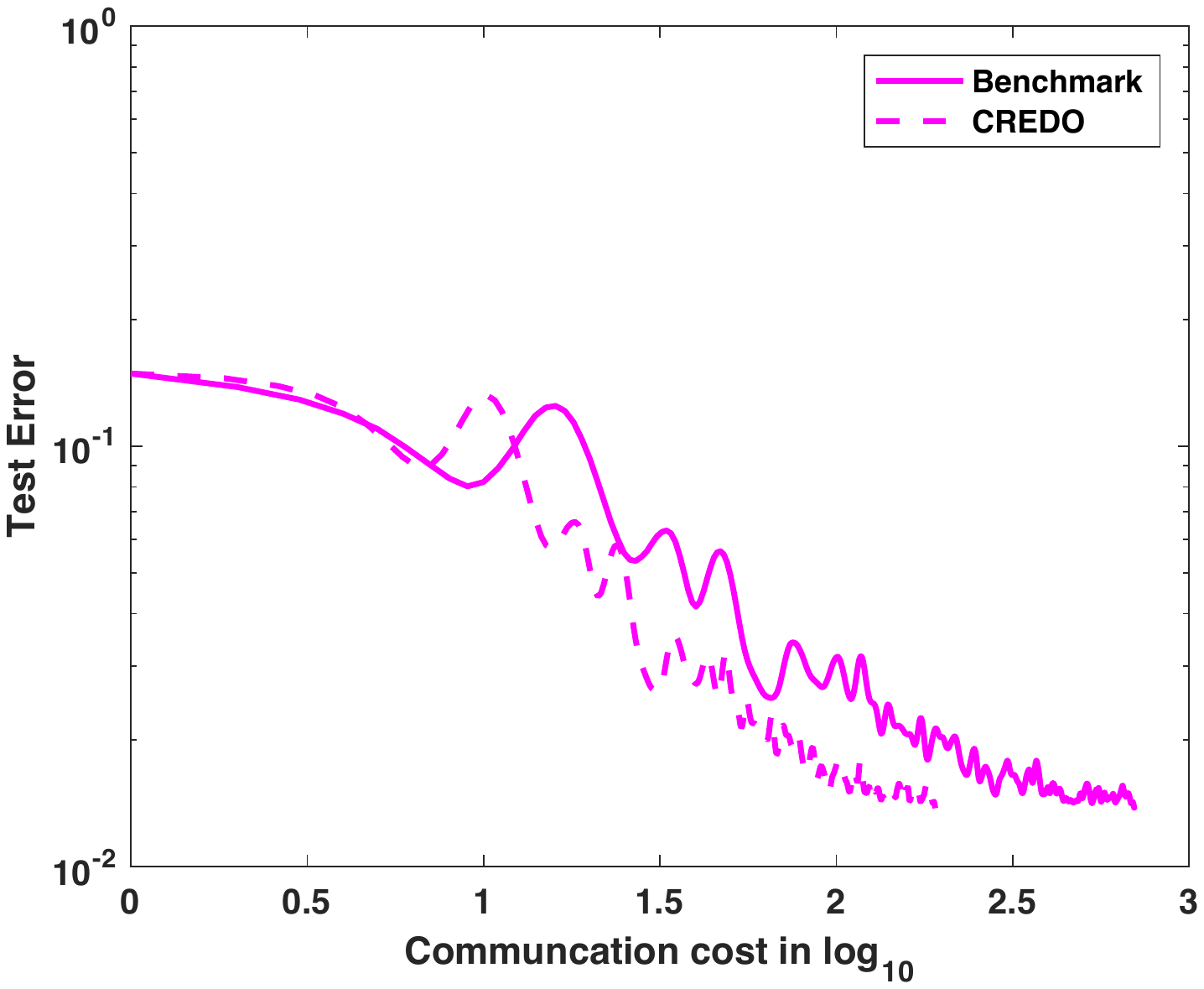}
			\caption{Comparison of Test Error: Communication cost per node}
			\label{fig:bank_comm_cost}
		\end{subfigure}
		\caption{Bank Dataset: Comparison of
			the $\mathcal{CREDO}$ and benchmark estimators}
		\label{fig:bank}
	\end{figure}
\begin{table}[h]
	\centering
	\caption{$\mathcal{CREDO}$: Communication cost across three datasets}\label{tab:2}
	\begin{tabular}{ l  l  l  l  l l}
		{\small\textbf{Dataset}} & {\small\textbf{Test Error}} & {\small\textbf{Network size}}&{\small\textbf{Avg. degree}}&{\small \textbf{$\mathcal{CREDO}$}}&{\small \textbf{Benchmark}}\\
		\hline \\
		$\textit{CADATA}$ & $2.15$&$20$&$4.8$&$\textbf{894}$ & $1810$ \\
		$\textit{ABALONE}$& $0.95$ &$10$&$5.2$&$\textbf{564}$ & $1558$ \\
		$\textit{BANK}$ & $0.015$ &$20$&$7.9$&$\textbf{1994}$ & $6962$ \\
		\hline
	\end{tabular}
\end{table}
\section{Conclusion}
\label{sec:conc}
In this paper, we have proposed a communication efficient distributed recursive estimation scheme~$\mathcal{CREDO}$, for which we have established strong consistency of the estimation sequence and characterized the asymptotic covariance of the estimate sequence in terms of the sensing model and the noise covariance. The communication efficiency of the proposed estimator has been characterized in terms of the dependence of the MSE decay on the communication cost. Specifically, we have established that the MSE decay rate of $\mathcal{CREDO}$ with respect to the number of communications can be as good as $\Theta\left(\mathcal{C}_{t}^{-2+\zeta}\right)$, where $\zeta>0$ and $\zeta$ is arbitrarily small. Future research directions include the development of communication schemes, that are adaptive in terms of the connectivity of a node, and local decision making in terms of whether to communicate or not based on neighborhood information.	The algorithm presented in this paper can be thought of as a distributed method to solve a stochastic optimization problem with a stochastic least squares-type cost function. A natural direction is to extend the proposed ideas to general stochastic distributed optimization.
\vspace{10pt}
\acks{The work of AKS and SK was supported in part by the National Science Foundation under grant CCF-1513936. The work of DJ was supported by the Ministry of Education, Science and
Technological Development, Republic of Serbia, grant no. 174030.}
\newpage
\appendix
\section*{Appendix A.}
\label{sec:conv}
We present the proofs of main results in this section.\\
\textbf{Proof sketch of Theorem \ref{th:cons}}\\
The proof of almost sure convergence of the estimate sequence to $\btheta$ involves establishing the boundedness of the estimate sequence. With the boundedness of the estimate sequence in place, we show the convergence of the estimate sequence to its averaged estimate sequence $\{\mathbf{x}_{\mbox{avg}}(t)\}$, where $\mathbf{x}_{\mbox{avg}}(t) = \frac{1}{N}\sum_{n=1}^{N}\mathbf{x}_{n}(t)$ at a rate faster $t^{1/2}$ and finally show that the averaged estimate sequence converges to $\btheta$ with a rate $\{(t+1)^{\tau}\}$~$\tau\in [0, 1/2)$. The final result follows by noting that, the averaged estimate sequence and the estimate sequence are indistinguishable in the $\{(t+1)^{\tau}\}$ time scale, where $\tau\in [0, 1/2)$.\\
\textbf{Proof sketch of Theorem \ref{th:2}}\\
The proof of the asymptotic normality of the estimate sequence proceeds in the following procedure. The first step involves establishing the asymptotic normality of the averaged estimate sequence $\mathbf{x}_{\mbox{avg}}(t)$. Moreover, an intermediate result ensures that the averaged estimate sequence and the estimate sequence are indistinguishable in the $\{(t+1)^{\frac{1}{2}}\}$ time scale. With the above development in place, it follows that the asymptotic normality of the averaged estimate sequence $\mathbf{x}_{\mbox{avg}}(t)$ can be extended to that of the estimate sequence $\{\mathbf{x}_{n}(t)\}$.\\
\begin{Lemma}
	\label{le:l0}
	For each $n$, the process $\{\mathbf{x}_{n}(t)\}$ satisfies
	\label{l2}
\begin{align}
		\label{eq:7.1}
		\mathbb{P}_{\theta}\left(\sup_{t\ge 0} \left\|\mathbf{x}(t)\right\| < \infty\right) =1.
		\end{align}
\end{Lemma}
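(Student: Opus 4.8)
The plan is to center the recursion at the true parameter and then run a stochastic Lyapunov (almost--supermartingale) argument. Write $\mathbf{z}(t)=\mathbf{x}(t)-\mathbf{1}_{N}\otimes\btheta$, and let $\mathcal{C}=\{\mathbf{1}_{N}\otimes\mathbf{v}:\mathbf{v}\in\mathbb{R}^{M}\}$ denote the consensus subspace. Since $\mathbf{L}(t)\mathbf{1}_{N}=\mathbf{0}$ for \emph{every} realization, the consensus term annihilates $\mathbf{1}_{N}\otimes\btheta$; substituting $\mathbf{y}(t)=\mathbf{G}_{H}^{\top}(\mathbf{1}_{N}\otimes\btheta)+\gamma(t)$ into \eqref{eq:ci_update} and using $\mathbf{L}(t)=\beta_{t}\overline{\mathbf{L}}+\widetilde{\mathbf{L}}(t)$ yields the linear error dynamics $\mathbf{z}(t+1)=\left(\mathbf{M}(t)-\widetilde{\mathbf{L}}(t)\otimes\mathbf{I}_{M}\right)\mathbf{z}(t)+\alpha_{t}\mathbf{G}_{H}\mathbf{\Sigma}^{-1}\gamma(t)$, where $\mathbf{M}(t)=\mathbf{I}_{NM}-\beta_{t}(\overline{\mathbf{L}}\otimes\mathbf{I}_{M})-\alpha_{t}\mathbf{G}_{H}\mathbf{\Sigma}^{-1}\mathbf{G}_{H}^{\top}$ is the deterministic (mean) part. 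I would take $V(t)=\|\mathbf{z}(t)\|^{2}$ as the Lyapunov function and let $\mathcal{F}_{t}$ be the natural filtration generated by the observations and communication indicators up to time $t$.

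Because $\gamma(t)$ and $\widetilde{\mathbf{L}}(t)$ are zero-mean and independent of $\mathcal{F}_{t}$ (and of each other), all cross terms vanish in the conditional second moment, leaving
\[ \mathbb{E}\left[V(t+1)\mid\mathcal{F}_{t}\right]=\left\|\mathbf{M}(t)\mathbf{z}(t)\right\|^{2}+\mathbb{E}\left[\left\|(\widetilde{\mathbf{L}}(t)\otimes\mathbf{I}_{M})\mathbf{z}(t)\right\|^{2}\mid\mathcal{F}_{t}\right]+\alpha_{t}^{2}\,\mathbb{E}\left[\left\|\mathbf{G}_{H}\mathbf{\Sigma}^{-1}\gamma(t)\right\|^{2}\right]. \]
The third term is bounded by $c\,\alpha_{t}^{2}$ using Assumption \ref{m:1} (finite second moment and finite covariances $\mathbf{\Sigma}_{n}$). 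The crucial observation for the middle term is that $\overline{\mathbf{L}}\mathbf{1}_{N}=\mathbf{L}(t)\mathbf{1}_{N}=\mathbf{0}$ forces $\widetilde{\mathbf{L}}(t)\mathbf{1}_{N}=\mathbf{0}$, so $\widetilde{\mathbf{L}}(t)\otimes\mathbf{I}_{M}$ annihilates $\mathcal{C}$ and acts only on the disagreement component $\mathbf{z}_{\mathcal{C}^{\perp}}(t)$; by \eqref{eq:laplace_res} this term is at most $\mathbb{E}[\|\widetilde{\mathbf{L}}(t)\|^{2}]\,\|\mathbf{z}_{\mathcal{C}^{\perp}}(t)\|^{2}\le c_{2}(t+1)^{-(\tau_{1}+\epsilon)}\|\mathbf{z}_{\mathcal{C}^{\perp}}(t)\|^{2}$.

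For the first term, abbreviating the time index and setting $P:=\overline{\mathbf{L}}\otimes\mathbf{I}_{M}$ and $Q:=\mathbf{G}_{H}\mathbf{\Sigma}^{-1}\mathbf{G}_{H}^{\top}$ (both positive semidefinite), I would expand
\[ \left\|\mathbf{M}(t)\mathbf{z}\right\|^{2}=V(t)-2\,\mathbf{z}^{\top}(\beta_{t}P+\alpha_{t}Q)\mathbf{z}+\mathbf{z}^{\top}(\beta_{t}P+\alpha_{t}Q)^{2}\mathbf{z}. \]
Since $\|\beta_{t}P+\alpha_{t}Q\|\to0$, for $t$ beyond some $t_{0}$ the quadratic remainder is dominated by half the linear term, giving $\|\mathbf{M}(t)\mathbf{z}\|^{2}\le V(t)-\beta_{t}\,\mathbf{z}^{\top}P\mathbf{z}-\alpha_{t}\,\mathbf{z}^{\top}Q\mathbf{z}\le V(t)-\beta_{t}\lambda_{2}(\overline{\mathbf{L}})\|\mathbf{z}_{\mathcal{C}^{\perp}}\|^{2}$, where Assumption \ref{m:3} supplies $\lambda_{2}(\overline{\mathbf{L}})>0$ and $Q\succeq0$ lets me drop the innovation term. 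Combining the two disagreement contributions produces the coefficient $-\big(\beta_{t}\lambda_{2}(\overline{\mathbf{L}})-c_{2}(t+1)^{-(\tau_{1}+\epsilon)}\big)\|\mathbf{z}_{\mathcal{C}^{\perp}}\|^{2}$; since $\epsilon>0$ we have $(t+1)^{-(\tau_{1}+\epsilon)}=o(\beta_{t})$, so for all $t$ past some $t_{1}$ this coefficient is nonnegative and the entire disagreement contribution is $\le0$. Hence $\mathbb{E}[V(t+1)\mid\mathcal{F}_{t}]\le V(t)+c\,\alpha_{t}^{2}$ for $t\ge t_{1}$.

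Finally, as $\alpha_{t}=a/(t+1)$ gives $\sum_{t}\alpha_{t}^{2}<\infty$, the Robbins--Siegmund almost-supermartingale convergence theorem implies that $V(t)$ converges almost surely to a finite limit, whence $\sup_{t\ge0}\|\mathbf{z}(t)\|<\infty$ and therefore $\sup_{t\ge0}\|\mathbf{x}(t)\|<\infty$ almost surely, establishing \eqref{eq:7.1}. I expect the main obstacle to be exactly the residual Laplacian term: as emphasized in Remark \ref{rm:2}, $\widetilde{\mathbf{L}}(t)$ does \emph{not} scale with $\beta_{t}$, so its magnitude $(t+1)^{-(\tau_{1}+\epsilon)}$ is non-summable and a crude bound would leave a divergent multiplicative factor $\prod(1+c_{2}(t+1)^{-(\tau_{1}+\epsilon)})$, defeating Robbins--Siegmund. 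The decisive points are (i) that $\widetilde{\mathbf{L}}(t)$ acts only on $\mathcal{C}^{\perp}$, and (ii) that on $\mathcal{C}^{\perp}$ it is dominated by the mean consensus contraction of order $\beta_{t}$ precisely because the link weight decay index $\epsilon$ is strictly positive — this is the extra, third time scale that must be controlled relative to the two-time-scale analysis of \citet{kar2013distributed}.
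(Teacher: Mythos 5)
Your proposal is correct and follows essentially the same route as the paper's proof: the quadratic Lyapunov function $V(t)=\|\mathbf{x}(t)-\mathbf{1}_{N}\otimes\btheta\|^{2}$, the decomposition $\mathbf{L}(t)=\beta_{t}\overline{\mathbf{L}}+\widetilde{\mathbf{L}}(t)$, the key observation that the residual Laplacian contribution of order $(t+1)^{-(\tau_{1}+\epsilon)}$ acts only on the disagreement component and is absorbed by the mean consensus contraction of order $\beta_{t}$ precisely because $\epsilon>0$, and a supermartingale convergence argument to conclude. The only cosmetic difference is that you invoke Robbins--Siegmund where the paper explicitly constructs the auxiliary supermartingale $W(t)=\bigl(\prod_{s\ge t}(1+\alpha_{s}^{2})\bigr)V(t)+\sum_{s\ge t}\widehat{\alpha}_{s}^{2}$; these are the same device.
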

\begin{proof}
	We first note that,
	\begin{align}
	\label{eq:la_decom}
	\mathbf{L}(t)=\beta_{t}\overline{\mathbf{L}}+\widetilde{\mathbf{L}}(t),
	\end{align}
	where $\mathbb{E}\left[\widetilde{\mathbf{L}}(t)\right] = \mathbf{0}$ and $\mathbb{E}\left[\widetilde{\mathbf{L}}_{i,j}^{2}(t)\right] = \frac{c_{4}}{(t+1)^{\tau_{1}+\epsilon}} - \frac{c_{3}^{2}}{(t+1)^{2\tau_{1}}}$.\\
	Define, $\mathbf{z}(t) = \mathbf{x}(t)-\mathbf{1}_{N}\otimes\boldsymbol{\theta}^{\ast}$ and $V(t) = \left\|\mathbf{z}(t)\right\|^{2}$.
	By conditional independence, we have that,
	\begin{align}
	\label{eq:lyap1}
	&\mathbb{E}\left[V(t+1)|\mathcal{F}_{t}\right] = V(t)\nonumber\\&+ \mathbf{z}^{\top}(t)\left(\mathbf{I}_{NM}-\beta_{t}\left(\overline{\mathbf{L}}\otimes\mathbf{I}_{M}\right)-\alpha_{t}\mathbf{G}_{H}\mathbf{\Sigma}^{-1}\mathbf{G}_{H}^{\top}\right)^{2}\mathbf{z}(t)\nonumber\\&+\mathbf{z}^{\top}(t)\mathbb{E}_{\boldsymbol{\theta}^{\ast}}\left[\left(\widetilde{\mathbf{L}}(t)\otimes\mathbf{I}_{M}\right)^{2}\right]\mathbf{z}(t)\nonumber\\
	&+\alpha^{2}(t)\mathbb{E}_{\boldsymbol{\theta}^{\ast}}\left[\left\|\mathbf{G}_{H}\mathbf{\Sigma}^{-1}\left(\mathbf{y}(t)-\mathbf{G}_{H}^{\top}\mathbf{1}_{N}\otimes\boldsymbol{\theta}^{\ast}\right)\right\|^{2}\right]\nonumber\\&-2\mathbf{z}^{\top}(t)\left(\beta_{t}\left(\overline{\mathbf{L}}\otimes\mathbf{I}_{M}\right)+\alpha_{t}\mathbf{G}_{H}\mathbf{\Sigma}^{-1}\mathbf{G}_{H}^{\top}\right)\mathbf{z}(t),
	\end{align}
	where the filtration $\{\mathcal{F}_{t}\}$ may be taken to be the natural filtration generated by the random observations, the random Laplacians~i.e.,
	\begin{align}
	\label{eq:filt_1}
	\mathcal{F}_{t}=\mathbf{\sigma}\left(\left\{\left\{\mathbf{y}_{n}(s)\right\}_{n=1}^{N}, \left\{\mathbf{L}(s)\right\}\right\}_{s=0}^{t-1}\right),
	\end{align}
	\noindent which is the $\sigma$-algebra induced by the observation processes.
	\noindent For $t\geq t_{1}$, it can be shown that,
	\begin{align}
	\label{eq:ineq1}
	&\mathbf{z}^{\top}(t)\left(\mathbf{I}_{NM}-\beta_{t}\left(\overline{\mathbf{L}}\otimes\mathbf{I}_{M}\right)-\alpha_{t}\mathbf{G}_{H}\mathbf{\Sigma}^{-1}\mathbf{G}_{H}^{\top}\right)^{2}\mathbf{z}(t) \nonumber\\&\leq \left(1-c_{4}\alpha_{t}\right)^{2}\left\|\mathbf{z}(t)\right\|^{2}.
	\end{align}
	We use the following inequalities so as to analyze the recursion in \eqref{eq:lyap1}.
	\begin{align}
	\label{eq:ineq2}
	&\mathbf{z}^{\top}(t)\mathbb{E}_{\boldsymbol{\theta}^{\ast}}\left[\left(\widetilde{\mathbf{L}}(t)\otimes\mathbf{I}_{M}\right)^{2}\right]\mathbf{z}(t) \le \frac{c_{5}\left\|\mathbf{z}_{\mathcal{C}^{\perp}}\right\|^{2}}{(t+1)^{\tau_{1}+\epsilon}} \nonumber\\
	&\mathbb{E}_{\boldsymbol{\theta}^{\ast}}\left[\left\|\mathbf{G}_{H}\mathbf{\Sigma}^{-1}\left(\mathbf{y}(t)-\mathbf{G}_{H}^{\top}\mathbf{1}_{N}\otimes\boldsymbol{\theta}^{\ast}\right)\right\|^{2}\right] \le c_{6}\nonumber\\
	&\mathbf{z}^{\top}(t)\left(\beta_{t}\left(\overline{\mathbf{L}}\otimes\mathbf{I}_{M}\right)+\alpha_{t}\mathbf{G}_{H}\mathbf{\Sigma}^{-1}\mathbf{G}_{H}^{\top}\right)\mathbf{z}(t)\nonumber\\&\geq \beta_{t}\lambda_{2}\left(\overline{\mathbf{L}}\right)\left\|\mathbf{z}_{\mathcal{C}^{\perp}}\right\|^{2}+c_{7}\alpha_{t}\left\|\mathbf{z}(t)\right\|^{2}.
	\end{align}
	Using the inequalities derived in \eqref{eq:ineq2}, we have,
	\begin{align}
	\label{eq:ineq3}
	&\mathbb{E}\left[V(t+1)|\mathcal{F}_{t}\right]\le (1+c_{8}\alpha^{2}(t))V(t)\nonumber\\&-c_{9}\left(\beta_{t}-\frac{c_{5}}{(t+1)^{\tau_{1}+\epsilon}}\right)\left\|\mathbf{z}_{\mathcal{C}^{\perp}}\right\|^{2}+c_{6}\alpha^{2}(t).
	\end{align}
	\noindent As $\frac{c_{5}}{(t+1)^{\tau_{1}+\epsilon}}$ goes to zero faster than $\beta_{t}$, $\exists t_{2}$ such that $\forall t\ge t_{2}$, $\beta_{t} \ge \frac{c_{5}}{(t+1)^{\tau_{1}+\epsilon}}$.
	\noindent By the above construction we obtain $\forall t \geq t_{2}$,
	\begin{align}
	\label{eq:l2_pr_17}
	\mathbb{E}_{\theta^{*}}[V(t+1) | \mathcal{F}_{t}] \le (1+\alpha^{2}(t))V(t)+\widehat\alpha_{t}^{2},
	\end{align}
	where $\widehat{\alpha}(t) = \sqrt{c_{6}}\alpha_{t}$.
	\noindent The product $\prod_{s=t}^{\infty}(1+\alpha_{s}^{2})$ exists for all $t$. Now let $\{W(t)\}$ be such that
	\begin{align}
	\label{eq:l2_pr_19}
	W(t)=\left(\prod_{s=t}^{\infty}(1+\alpha_{s}^{2})\right)V_{2}(t)+\sum_{s=t}^{\infty}\widehat{\alpha}_{s}^{2},~\forall t\geq t_{2}.
	\end{align}
	\noindent By \eqref{eq:l2_pr_19}, it can be shown that $\{W(t)\}$ satisfies,
	\begin{align}
	\label{eq:l2_pr_20}
	\mathbb{E}_{\theta^{*}}[W(t+1) | \mathcal{F}_{t}] \le W(t).
	\end{align}
	\noindent Hence,  $\{W(t)\}$ is a non-negative super martingale and converges a.s. to a bounded random variable $W^{*}$ as $t\to\infty$. It then follows from \eqref{eq:l2_pr_19} that $V(t)\to W^{*}$ as $t\to\infty$. Thus, we conclude that the sequences $\{\mathbf{x}_{n}(t)\}$ are bounded for all $n$.
\end{proof}
We now prove the almost sure convergence of the estimate sequence to the true parameter. In the sequel, we establish the order optimal convergence of the estimate sequence in the regime of $0<\tau_{1}<\frac{1}{2}-\frac{1}{2+\epsilon_{1}}$.
\begin{Lemma}
	\label{le:conv}
	Let the hypothesis of Theorem \ref{th:cons} hold. Then, we have,
	\begin{align}
	\label{eq:conv}
	\mathbb{P}_{\btheta}\left(\lim_{t\rightarrow\infty}\mathbf{x}_{n}(t)=\btheta\right)=1.
	\end{align}
	\begin{proof}[Proof of Lemma \ref{le:conv}]
		Following as in the proof of Lemma \ref{le:l0}, for $t$ large enough
		\begin{align}
		\label{eq:l1_pr_13}
		&\mathbb{E}_{\btheta}[V(t+1)|\mathcal{F}_{t}]\le\left(1-2c_{4}\alpha_{t}+c_{7}\alpha^{2}_{t}\right)V(t)+c_{6}\alpha_{t}^{2}\nonumber\\
		&\le V(t)+c_{6}\alpha_{t}^{2},
		\end{align}
		\noindent as for $t$ large enough, $-2c_{4}\alpha_{t}+c_{7}\alpha^{2}_{t}<0$.
		\noindent Now, consider the $\{\mathcal{F}_{t}\}$-adapted process $\{V_{1}(t)\}$ defined as follows
		\begin{align}
		\label{eq:l1_pr_14}
		&V_{1}(t)=V(t)+c_{6}\sum_{s=t}^{\infty}\alpha_{s}^{2}\nonumber\\
		&=V(t)+c_{8}\sum_{s=t}^{\infty}(t+1)^{-2},
		\end{align}		
		\noindent for appropriately chosen positive constant $c_{8}$.Since, $\{(t+1)^{-2}\}$ is summable, the process $\{V_{1}(t)\}$ is bounded from above. Moreover, it also follows that $\{V_{1}(t)\}_{t\geq t_{1}}$ is a supermartingale and hence converges a.s. to a finite random variable. By definition from \eqref{eq:l1_pr_14}, we also have that $\{V(t)\}$ converges to a non-negative finite random variable $V^{*}$. Finally, from \eqref{eq:l1_pr_13}, we have that,
		\begin{align}
		\label{eq:l1_pr_15}
		\mathbb{E}_{\btheta}[V(t+1)]\le \left(1-c_{7}\alpha_{t}\right)\mathbb{E}_{\btheta}[V(t)]+c_{9}(t+1)^{-2},
		\end{align}
		\noindent for $t\geq t_{1}$. The sequence $\{V(t)\}$ then falls under the purview of Lemma \ref{int_res_0}, and we have $\mathbb{E}_{\btheta}[V(t)]\to 0$ as $t\to\infty$. Finally, by Fatou's Lemma, where we use the non-negativity of the sequence $\{V(t)\}$, we conclude that
		\begin{align}
		\label{eq:l1_pr_16}
		0\leq \mathbb{E}_{\btheta}[V^{*}]\le\liminf_{t\to\infty}\mathbb{E}_{\btheta}[V(t)]=0,
		\end{align}
		\noindent which thus implies that $V^{*}=0$ a.s. Hence, $\left\|\mathbf{z}(t)\right\|\to 0$ as $t\to\infty$ and the desired assertion follows.
	\end{proof}
\end{Lemma}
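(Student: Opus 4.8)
The plan is to run a stochastic Lyapunov argument on the squared error $V(t)=\|\mathbf{z}(t)\|^{2}$ with $\mathbf{z}(t)=\mathbf{x}(t)-\mathbf{1}_{N}\otimes\btheta$, bootstrapping off the almost sure boundedness already secured in Lemma~\ref{le:l0}. First I would insert the observation model $\mathbf{y}(t)=\mathbf{G}_{H}^{\top}(\mathbf{1}_{N}\otimes\btheta)+\gamma(t)$ into the compact update~\eqref{eq:ci_update} and compute $\mathbb{E}_{\btheta}\!\left[V(t+1)\mid\mathcal{F}_{t}\right]$. By the conditional independence of the innovation noise $\gamma(t)$ and of the Laplacian randomness from the past, the odd cross terms drop and I am left with the same decomposition used for boundedness: a deterministic near-contraction $\bigl(\mathbf{I}-\beta_{t}(\overline{\mathbf{L}}\otimes\mathbf{I}_{M})-\alpha_{t}\mathbf{G}_{H}\mathbf{\Sigma}^{-1}\mathbf{G}_{H}^{\top}\bigr)^{2}$, a residual-Laplacian variance term, the $O(\alpha_{t}^{2})$ innovation noise floor, and a strictly negative drift $-2\mathbf{z}^{\top}(t)\bigl(\beta_{t}(\overline{\mathbf{L}}\otimes\mathbf{I}_{M})+\alpha_{t}\mathbf{G}_{H}\mathbf{\Sigma}^{-1}\mathbf{G}_{H}^{\top}\bigr)\mathbf{z}(t)$.

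The heart of the bound is to show that, for all large $t$, the net drift is non-positive up to the noise floor. Here I would invoke global observability (Assumption~\ref{m:2}) and connectedness in the mean (Assumption~\ref{m:3}): on the consensus subspace the innovation operator $\mathbf{G}_{H}\mathbf{\Sigma}^{-1}\mathbf{G}_{H}^{\top}$ is coercive, giving an $\alpha_{t}$-order contraction of the whole error, while on the disagreement subspace $\overline{\mathbf{L}}$ is coercive with constant $\lambda_{2}(\overline{\mathbf{L}})$, giving a $\beta_{t}$-order contraction of $\|\mathbf{z}_{\mathcal{C}^{\perp}}\|^{2}$. The three time scales enter precisely here: the residual Laplacian injects a positive perturbation of order $(t+1)^{-(\tau_{1}+\epsilon)}$ onto $\|\mathbf{z}_{\mathcal{C}^{\perp}}\|^{2}$ (cf.~\eqref{eq:laplace_res}), which must be dominated by the consensus potential $\beta_{t}\sim(t+1)^{-\tau_{1}}$ from~\eqref{eq:beta}. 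Since $\epsilon>0$, there is a threshold $t_{2}$ past which $\beta_{t}$ wins, the disagreement term can be dropped, and the recursion collapses to $\mathbb{E}_{\btheta}\!\left[V(t+1)\mid\mathcal{F}_{t}\right]\le\bigl(1-c\,\alpha_{t}+c'\alpha_{t}^{2}\bigr)V(t)+c''\alpha_{t}^{2}\le V(t)+c''\alpha_{t}^{2}$ for $t$ large.

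With this one-line recursion in hand I would finish in two complementary ways. For the almost sure claim, define $V_{1}(t)=V(t)+c''\sum_{s\ge t}\alpha_{s}^{2}$; since $\alpha_{s}^{2}=a^{2}/(s+1)^{2}$ is summable, $V_{1}(t)$ is a well-defined nonnegative supermartingale, hence converges almost surely, forcing $V(t)\to V^{\ast}$ a.s.\ for some finite $V^{\ast}\ge0$. This alone does \emph{not} give $V^{\ast}=0$, so separately I would take unconditional expectations to obtain the deterministic recursion $\mathbb{E}_{\btheta}[V(t+1)]\le(1-c\,\alpha_{t})\mathbb{E}_{\btheta}[V(t)]+c'''(t+1)^{-2}$, which falls under the standard stochastic-approximation decay lemma (Lemma~\ref{int_res_0}) and yields $\mathbb{E}_{\btheta}[V(t)]\to0$. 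Fatou's lemma then closes the gap: $0\le\mathbb{E}_{\btheta}[V^{\ast}]\le\liminf_{t}\mathbb{E}_{\btheta}[V(t)]=0$, so $V^{\ast}=0$ a.s., i.e.\ $\mathbf{x}_{n}(t)\to\btheta$ almost surely for every $n$.

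I expect the main obstacle to be the drift-sign certification in the middle step. Unlike the identically distributed Laplacian setting of \citet{kar2013distributed}, here $\overline{\mathbf{L}}(t)=\beta_{t}\overline{\mathbf{L}}$ has a vanishing mean while the residual $\widetilde{\mathbf{L}}(t)$ does not scale with $\beta_{t}$, so the consensus and residual potentials live on genuinely distinct time scales and the effective sign of the drift on $\|\mathbf{z}_{\mathcal{C}^{\perp}}\|^{2}$ is not automatic; pinning down $t_{2}$ and checking that the discarded disagreement term is truly harmless is where the three-time-scale bookkeeping has to be done with care. A secondary subtlety, easy to overlook, is that the supermartingale step only yields convergence to some $V^{\ast}$, so the separate mean-recursion-plus-Fatou route is genuinely needed to rule out a nonzero limit.
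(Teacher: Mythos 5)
Your proposal follows essentially the same route as the paper's proof: the same conditional Lyapunov recursion $\mathbb{E}_{\btheta}[V(t+1)\mid\mathcal{F}_{t}]\le V(t)+c\,\alpha_{t}^{2}$ obtained by dominating the residual-Laplacian perturbation with the consensus potential for large $t$, the same compensated supermartingale $V_{1}(t)=V(t)+c\sum_{s\ge t}\alpha_{s}^{2}$ to get a.s.\ convergence to some $V^{\ast}$, and the same mean-recursion-plus-Lemma~\ref{int_res_0}-plus-Fatou argument to force $V^{\ast}=0$. You also correctly flag the two genuine subtleties (the sign of the drift across the three time scales, and the fact that the supermartingale step alone does not identify the limit), so the proposal is correct and matches the paper.
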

\noindent Consider the averaged estimate sequence, $\{\mathbf{x}_{\mbox{avg}}(t)\}\}$, which follows the following update:
\begin{align}
\label{eq:xavg}
&\mathbf{x}_{\mbox{avg}}(t+1)=\left(\mathbf{I}_{M}-\frac{\alpha_{t}}{N}\sum_{n=1}^{N}\mathbf{H}_{n}^{\top}\mathbf{\Sigma}_{n}^{-1}\mathbf{H}_{n}\right)\mathbf{x}_{\mbox{avg}}(t)\nonumber\\
&+\frac{\alpha_{t}}{N}\sum_{n=1}^{N}\mathbf{H}_{n}^{\top}\mathbf{\Sigma}_{n}^{-1}\left(\mathbf{x}_{n}(t)-\mathbf{x}_{\mbox{avg}}(t)\right)\nonumber\\&+\frac{\alpha_{t}}{N}\sum_{n=1}^{N}\mathbf{H}_{n}^{\top}\mathbf{\Sigma}_{n}^{-1}\mathbf{\gamma}_{n}(t).
\end{align}
\noindent The following Lemmas will be used to quantify the rate of convergence of distributed vector or matrix valued recursions to their network-averaged behavior.
\noindent\begin{Lemma}
	\label{le:l1.1}
	Let $\{z_{t}\}$ be an $\mathbb{R}^{+}$ valued $\mathcal{F}_{t}$-adapted process that satisfies
	\begin{align*}
	z_{t+1} \leq \left(1-r_{1}(t)\right)z_{t} +r_{2}(t)U_t(1+J_t),
	\end{align*}
	\noindent where $\{r_1(t)\}$ is an $\mathcal{F}_{t+1}$-adapted process, such that for all $t$, $r_1(t)$ satisfies $0\leq r_1(t)\leq 1$ and
	\begin{align*}
	a_{1}\leq \mathbb{E}\left[r_1(t)|\mathcal{F}_{t}\right] \leq \frac{1}{(t+1)^{\delta_{1}}}
	\end{align*}
	\noindent with $a_{1} > 0$ and $0\leq \delta_{1} < 1$. The sequence $\{r_2(t)\}$ is deterministic and $\mathbb{R}^{+}$ valued and satisfies $r_2(t)\leq \frac{a_{2}}{(t+1)^{\delta_{2}}}$ with $a_2 > 0$ and $\delta_{2}>0$. Further, let $\{U_t\}$ and $\{J_t\}$ be $\mathbb{R}^{+}$ valued $\mathcal{F}_{t}$ and $\mathcal{F}_{t+1}$ adapted processes, respectively, with $\sup_{t\geq 0} \left\|U_t\right\|<\infty$ a.s. The process $\left\{J_{t}\right\}$ is i.i.d. with $J_{t}$ independent of $\mathcal{F}_{t}$ for each $t$ and satisfies the moment condition $\mathbb{E}\left[\left\|J_{t}\right\|^{2+\epsilon_{1}} \right]<\kappa<\infty$ for some $\epsilon_{1}>0$ and a constant $\kappa > 0$. Then, for every $\delta_{0}$ such that
	$0\leq \delta_0 < \delta_{2}-\delta_{1}-\frac{1}{2+\epsilon_{1}}$, we have $(t+1)^{\delta_{0}}z_{t}\to 0$ a.s. as $t\to\infty$.
\end{Lemma}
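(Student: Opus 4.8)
\textbf{Proof proposal for Lemma \ref{le:l1.1}.}

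The plan is to establish the a.s.\ decay rate \emph{pathwise}, in three stages: reduce the multiplicative forcing $r_2(t)U_t(1+J_t)$ to a purely deterministic decaying term by truncating the heavy-tailed innovation $J_t$; then control the homogeneous part generated by the random contraction $r_1(t)$; and finally balance the two to read off the rate. First I would use the boundedness hypothesis on $U_t$. Since $\overline{U}:=\sup_{t\ge 0}\|U_t\|<\infty$ a.s.\ but need not be integrable, the whole argument must be carried out pathwise; fixing a sample point in the full-measure event $\{\overline{U}<\infty\}$, I may work with $z_{t+1}\le (1-r_1(t))z_t + \frac{a_2\overline{U}}{(t+1)^{\delta_2}}(1+J_t)$.

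Second, the heart of the matter is $J_t$, which carries only $2+\epsilon_1$ moments. Because $\delta_0<\delta_2-\delta_1-\frac{1}{2+\epsilon_1}$, I can fix a truncation exponent $\eta$ with $\frac{1}{2+\epsilon_1}<\eta<\delta_2-\delta_1-\delta_0$. By Markov's inequality and the moment bound $\mathbb{E}[\|J_t\|^{2+\epsilon_1}]<\kappa$, one has $\mathbb{P}(J_t>(t+1)^{\eta})\le \kappa\,(t+1)^{-\eta(2+\epsilon_1)}$, which is summable since $\eta(2+\epsilon_1)>1$; Borel--Cantelli then yields a random, a.s.-finite index $T$ beyond which $J_t\le (t+1)^{\eta}$. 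Hence a.s., for $t\ge T$,
\[
z_{t+1}\le (1-r_1(t))\,z_t + \frac{C}{(t+1)^{\delta_2-\eta}},
\]
where $C=C(\omega)<\infty$ absorbs $a_2\overline{U}$ and a constant factor. This step is exactly where the penalty $\frac{1}{2+\epsilon_1}$ is born: summability forces $\eta>\frac{1}{2+\epsilon_1}$, and this degrades the forcing exponent from $\delta_2$ to $\delta_2-\eta$.

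Third, I would analyze this reduced recursion with deterministic forcing but random contraction. Unrolling gives $z_t\le P(T,t)\,z_T + C\sum_{s=T}^{t-1}P(s+1,t)\,(s+1)^{\eta-\delta_2}$, with $P(s,t)=\prod_{u=s}^{t-1}(1-r_1(u))\le \exp\!\big(-\sum_{u=s}^{t-1}r_1(u)\big)$. The contraction sum is handled by martingale concentration: $N_t=\sum_{u<t}\big(r_1(u)-\mathbb{E}[r_1(u)\mid\mathcal{F}_u]\big)$ is a martingale with increments bounded by $1$ and, since $r_1\in[0,1]$, predictable quadratic variation $\langle N\rangle_t\le\sum_{u<t}\mathbb{E}[r_1(u)\mid\mathcal{F}_u]=\Theta\big((t+1)^{1-\delta_1}\big)$, so a martingale law of the iterated logarithm gives $N_t=o\big((t+1)^{1-\delta_1}\big)$ a.s.; as the compensator is of order $(t+1)^{1-\delta_1}$ (the conditional mean of $r_1(u)$ being of order $(u+1)^{-\delta_1}$), this yields $\sum_{u=s}^{t-1}r_1(u)\ge c\big((t+1)^{1-\delta_1}-(s+1)^{1-\delta_1}\big)$ eventually, a.s. Inserting this into the products and evaluating the resulting sum by a Laplace/Watson-type estimate (the effective summation window near the upper limit has width $\Theta((t+1)^{\delta_1})$) gives $z_t=O\big((t+1)^{-(\delta_2-\eta-\delta_1)}\big)$ a.s. Since $\delta_2-\eta-\delta_1>\delta_0$ by the choice of $\eta$, I conclude $(t+1)^{\delta_0}z_t\to 0$ a.s.

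The main obstacle is the interaction of a random contraction coefficient $r_1(t)$ --- for which only a conditional-mean lower bound is available, so it may vanish along the sample path --- with a multiplicative noise whose envelope $U_t$ is merely a.s.\ finite and possibly non-integrable. This combination rules out the simplest route of taking expectations and invoking a Robbins--Siegmund or deterministic recursion lemma (as in the proof of Lemma~\ref{le:conv}), because $\mathbb{E}[z_t]$ need not even be finite; it forces the pathwise product-concentration argument above. Calibrating the truncation exponent $\eta$ just above $\frac{1}{2+\epsilon_1}$ so as to preserve the target rate is the delicate quantitative step, and it is precisely what produces the $\frac{1}{2+\epsilon_1}$ loss in the final exponent.
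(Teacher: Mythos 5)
The paper does not actually prove Lemma~\ref{le:l1.1}; it is stated without proof and is, in effect, imported from the consensus+innovations literature (cf.\ \citet{kar2013distributed}), so your proposal can only be judged on its own merits. Your first stage is correct and is exactly the standard one: working pathwise on $\{\sup_t U_t<\infty\}$, choosing $\eta\in\left(\tfrac{1}{2+\epsilon_1},\,\delta_2-\delta_1-\delta_0\right)$, and using Markov plus Borel--Cantelli to get $J_t\le (t+1)^{\eta}$ for all large $t$ a.s.\ correctly reduces the forcing to $C(\omega)(t+1)^{\eta-\delta_2}$ and correctly identifies the origin of the $\tfrac{1}{2+\epsilon_1}$ loss. (You also silently repair the hypothesis $a_1\le \mathbb{E}[r_1(t)|\mathcal{F}_t]\le (t+1)^{-\delta_1}$, which as written is vacuous for $\delta_1>0$; the intended condition is $\mathbb{E}[r_1(t)|\mathcal{F}_t]\ge a_1(t+1)^{-\delta_1}$, and that is the reading your argument needs.)

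The gap is in the treatment of the random contraction. The claim that, eventually a.s., $\sum_{u=s}^{t-1}r_1(u)\ge c\left((t+1)^{1-\delta_1}-(s+1)^{1-\delta_1}\right)$ cannot be deduced from the global bound $N_t=o\left((t+1)^{1-\delta_1}\right)$, and is in fact false uniformly in $s$: for $s=t-1$ the left side is $r_1(t-1)$, which may be $0$ since only its conditional mean is bounded below, while the right side is of order $(t+1)^{-\delta_1}>0$. Differencing the global LIL bound only shows the exponential decay of $P(s+1,t)$ is effective once $t-s\gtrsim (t+1)^{(1+\delta_1)/2}$ (or worse), so bounding $P\le 1$ on the remaining window gives a contribution of order $(t+1)^{(1+\delta_1)/2+\eta-\delta_2}$ rather than the $(t+1)^{\delta_1+\eta-\delta_2}$ your Laplace estimate assumes; for small $\delta_1$ this loses the stated range of $\delta_0$. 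To close the gap you need either (a) a concentration bound uniform over all windows --- e.g.\ Freedman's inequality for the martingale increments of $\sum r_1(u)$ plus a union bound over $s\le t$, which yields $\sum_{u=s}^{t-1}r_1(u)\ge \tfrac{a_1}{2}(t-s)(t+1)^{-\delta_1}$ whenever $t-s\ge K(t+1)^{\delta_1}\log(t+1)$, a log factor that is harmless because $\delta_0$ is strictly below $\delta_2-\eta-\delta_1$ --- or (b) the cleaner route of working with expectations of the products: since $1-r_1(u)\ge 0$, iterated conditioning gives $\mathbb{E}\left[\prod_{u=s}^{t-1}(1-r_1(u))\right]\le \prod_{u=s}^{t-1}\left(1-a_1(u+1)^{-\delta_1}\right)\le \exp\left(-c\left[(t+1)^{1-\delta_1}-(s+1)^{1-\delta_1}\right]\right)$, after which the a.s.\ statement follows from first moments and Borel--Cantelli without any uniform pathwise lower bound on $\sum r_1(u)$. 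With either repair your architecture delivers the lemma; as written, the step from the martingale LIL to the window width $\Theta((t+1)^{\delta_1})$ is unjustified.
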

\begin{Lemma}[Lemma 4.1 in \citet{kar2013distributed}]
	\label{int_res_0}
	\noindent Consider the scalar time-varying linear system
	\begin{align}
	\label{eq:int_res_0}
	u(t+1)\leq(1-r_{1}(t))u(t)+r_{2}(t),
	\end{align}
	\noindent where $\{r_{1}(t)\}$ is a sequence, such that
	\begin{align}
	\label{eq:int_res_0_1}
	\frac{a_{1}}{(t+1)^{\delta_{1}}}\leq r_{1}(t)\leq 1
	\end{align}
	\noindent with $a_{1} >0, 0\leq\delta_{1}\leq 1$, whereas the sequence $\{r_{2}(t)\}$ is given by
	\begin{align}
	\label{eq:int_res_0_2}
	r_{2}(t)\le\frac{a_{2}}{(t+1)^{\delta_{2}}}
	\end{align}
	\noindent with $a_{2}>0, \delta_{2}\geq 0$. Then, if $u(0)\geq 0$ and $\delta_{1} < \delta_{2}$, we have
	\begin{align}
	\label{eq:int_res_0_3}
	\lim_{t\to\infty}(t+1)^{\delta_0}u(t)=0,
	\end{align}
	\noindent for all $0\le\delta_{0}<\delta_{2}-\delta_{1}$. Also, if $\delta_{1}=\delta_{2}$, then the sequence $\{u(t)\}$ stays bounded, i.e. $\sup_{t\geq 0}\left\|u(t)\right\|<\infty$.
\end{Lemma}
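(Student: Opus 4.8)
The plan is to reduce the stated rate claim to a \emph{canonical} scalar recursion of the same shape and then to close with an elementary convergence principle for such recursions. First I would observe that, since $0\le r_1(t)\le 1$ and $r_2(t)\ge 0$, the map $u\mapsto (1-r_1(t))u+r_2(t)$ is monotone and preserves non-negativity; hence, starting from $u(0)\ge 0$, the dominating sequence defined by \emph{equality} in \eqref{eq:int_res_0} stays non-negative and upper bounds $\{u(t)\}$. I may therefore assume $u(t)\ge 0$ for all $t$. The substitution carrying the weight is $v(t)=(t+1)^{\delta_0}u(t)$, and the goal becomes $v(t)\to 0$.

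Next I would derive the recursion satisfied by $\{v(t)\}$. Multiplying \eqref{eq:int_res_0} by $(t+2)^{\delta_0}$ and writing $(t+2)^{\delta_0}=\big(\tfrac{t+2}{t+1}\big)^{\delta_0}(t+1)^{\delta_0}$ yields
\begin{align*}
v(t+1)\le \Big(\tfrac{t+2}{t+1}\Big)^{\delta_0}\big(1-r_1(t)\big)\,v(t)+(t+2)^{\delta_0}r_2(t).
\end{align*}
Using $\big(1+\tfrac{1}{t+1}\big)^{\delta_0}\le 1+\tfrac{c\,\delta_0}{t+1}$ for a constant $c$, together with $r_1(t)\ge a_1(t+1)^{-\delta_1}$ and $\delta_1<1$, the multiplicative factor can be bounded above by $1-\tilde p(t)$ with $\tilde p(t)\ge \tfrac{a_1}{2}(t+1)^{-\delta_1}$ for all large $t$, because the rescaling drift $\tfrac{c\,\delta_0}{t+1}$ is of strictly smaller order than the decay $(t+1)^{-\delta_1}$ and is absorbed by it. The forcing term obeys $(t+2)^{\delta_0}r_2(t)\le \tilde q(t):=c'\,(t+1)^{-(\delta_2-\delta_0)}$. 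The hypothesis $\delta_0<\delta_2-\delta_1$ is precisely what gives $\delta_2-\delta_0>\delta_1$, so that $\tilde q(t)/\tilde p(t)=O\big((t+1)^{\delta_1-(\delta_2-\delta_0)}\big)\to 0$, while $\sum_t\tilde p(t)=\infty$ since $\delta_1\le 1$.

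Then I would invoke the elementary fact that a non-negative sequence satisfying $v(t+1)\le(1-\tilde p(t))v(t)+\tilde q(t)$ with $0\le\tilde p(t)\le 1$, $\sum_t\tilde p(t)=\infty$, and $\tilde q(t)=o(\tilde p(t))$ must tend to $0$. I would prove this by fixing $\epsilon>0$, choosing $T$ with $\tilde q(t)\le\epsilon\,\tilde p(t)$ for $t\ge T$, and noting that $w(t):=v(t)-\epsilon$ then satisfies $w(t+1)\le(1-\tilde p(t))w(t)$: once $w$ becomes non-positive it stays so (as $1-\tilde p(t)\ge 0$), and while positive it is driven to $0$ by $\prod(1-\tilde p(s))\to 0$; hence $\limsup_t v(t)\le\epsilon$ for every $\epsilon$, giving $v(t)\to 0$. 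For the boundary case $\delta_1=\delta_2$ I would take $\delta_0=0$, so $v=u$ and $\tilde q(t)/\tilde p(t)\to a_2/a_1$, a positive constant; the same comparison applied to $w(t)=v(t)-c''$ with $c''$ slightly above $a_2/a_1$ shows $\sup_t u(t)<\infty$. The main obstacle is the rescaling step, namely controlling the multiplicative drift $\big(\tfrac{t+2}{t+1}\big)^{\delta_0}$ so that it is absorbed into $r_1(t)$ without destroying $\sum\tilde p=\infty$; this is clean when $\delta_1<1$, whereas the borderline $\delta_1=1$ is delicate, since the drift $\delta_0/(t+1)$ is then of the same order as the decay and one needs $a_1>\delta_0$ for $\tilde p$ to remain positive with divergent sum—consistent with the fact that the intended applications (e.g.\ \eqref{eq:l1_pr_15}) invoke the lemma with $\delta_0=0$, where the drift vanishes.
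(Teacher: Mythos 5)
The paper does not actually prove this lemma --- it is imported verbatim from Lemma~4.1 of \citet{kar2013distributed} --- so there is no in-paper argument to compare yours against, and I can only assess your proof on its own terms. Your route (pass to the dominating non-negative sequence, rescale to $v(t)=(t+1)^{\delta_0}u(t)$, absorb the multiplicative drift $\bigl(1+\tfrac{1}{t+1}\bigr)^{\delta_0}$ into $r_1(t)$, and close with the elementary comparison principle for $v(t+1)\le(1-\tilde p(t))v(t)+\tilde q(t)$ under $\sum_t\tilde p(t)=\infty$ and $\tilde q(t)=o(\tilde p(t))$) is correct and complete for $\delta_1<1$, and your handling of the boundedness claim when $\delta_1=\delta_2$ is also sound. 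Two remarks. First, you implicitly use $r_2(t)\ge 0$ and reduce to $u(t)\ge 0$; this is consistent with every application in the paper (the lemma is applied to $\mathbb{E}_{\btheta}[V(t)]$ with $V(t)$ a squared norm), but it deserves an explicit mention since the hypotheses as transcribed only bound $r_2$ from above. Second, the caveat you flag at $\delta_1=1$ is not a weakness of your method but a genuine defect of the statement as transcribed: taking $r_1(t)=a_1/(t+1)$ with $a_1<\delta_0$ and $r_2\equiv 0$ yields $u(t)\asymp (t+1)^{-a_1}$, so $(t+1)^{\delta_0}u(t)\to\infty$ even though $\delta_0<\delta_2-1$ is available for large $\delta_2$; the correct borderline version requires $a_1>\delta_0$, exactly the condition $c>\tau$ carried by the companion Fabian result (Lemma~\ref{l33}). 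Since the paper only ever invokes the present lemma with target exponent $\delta_0=0$ (e.g., in \eqref{eq:l1_pr_15} and \eqref{eq:l1_13}), where there is no drift to absorb and your comparison argument applies directly, your proof covers every use actually made of the result.
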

\begin{Lemma}
	\label{le:l2}
	Let the Assumptions \ref{m:1}-\ref{m:3} hold. Consider the averaged estimate sequence as in \eqref{eq:xavg}. Then, we have,
	\begin{align}
	\label{eq:l2_1}
	\mathbb{P}\left(\lim_{t\to\infty}(t+1)^{\frac{1}{2}+\delta}\left(\mathbf{x}(t)-\mathbf{1}_{N}\otimes\mathbf{x}_{avg}(t)\right)=0\right)=1
	\end{align}
\end{Lemma}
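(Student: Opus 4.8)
We want to show that the per-node estimates $\mathbf{x}_n(t)$ converge to the network-average $\mathbf{x}_{\mathrm{avg}}(t)$ at a rate faster than $t^{-1/2}$, i.e. that the disagreement vector $\mathbf{z}_{\mathcal{C}^\perp}(t) := \mathbf{x}(t) - \mathbf{1}_N\otimes \mathbf{x}_{\mathrm{avg}}(t)$ (the projection of $\mathbf{x}(t)$ onto the consensus-orthogonal subspace $\mathcal{C}^\perp$) decays like $o(t^{-1/2-\delta})$ almost surely for some $\delta>0$.

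**Overall plan.** The natural strategy is to derive a scalar recursion for the consensus-subspace component $\|\mathbf{z}_{\mathcal{C}^\perp}(t)\|$ and then invoke Lemma~\ref{le:l1.1} with an appropriate identification of the parameters $r_1(t), r_2(t), U_t, J_t$. Concretely, let $\mathbf{P}_{\mathcal{C}^\perp}$ be the orthogonal projector onto $\mathcal{C}^\perp$. I would project the compact update \eqref{eq:ci_update} onto $\mathcal{C}^\perp$. The key structural observation is that the innovation operator $\mathbf{G}_H\mathbf{\Sigma}^{-1}\mathbf{G}_H^\top$ does \emph{not} respect the consensus decomposition, but it is bounded, and multiplied by $\alpha_t = \Theta(t^{-1})$; hence on the $\mathcal{C}^\perp$ component the dominant contracting term comes from the \emph{mean} Laplacian $\beta_t\overline{\mathbf{L}}$, which acts on $\mathcal{C}^\perp$ with spectral gap $\lambda_2(\overline{\mathbf{L}})>0$ (Assumption~\ref{m:3}). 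The plan is therefore to show a one-step inequality of the form
\begin{align}
\label{eq:plan_rec}
\|\mathbf{z}_{\mathcal{C}^\perp}(t+1)\| \le \left(1 - \beta_t\lambda_2(\overline{\mathbf{L}}) + \text{l.o.t.}\right)\|\mathbf{z}_{\mathcal{C}^\perp}(t)\| + \widetilde{\mathbf{L}}\text{-term} + \alpha_t\,\text{innovation}.
\end{align}

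**Handling the three time-scales.** The subtle point — and the reason three time-scales appear — is that the residual Laplacian $\widetilde{\mathbf{L}}(t)\otimes\mathbf{I}_M$ does \emph{not} scale like $\beta_t$, but rather carries a slower variance decay $\Theta(t^{-\tau_1-\epsilon})$ (see \eqref{eq:laplace_res}), while the contraction on $\mathcal{C}^\perp$ is $\Theta(\beta_t)=\Theta(t^{-\tau_1})$. I would absorb $\widetilde{\mathbf{L}}(t)$ into the forcing term, using the global boundedness of $\{\mathbf{x}(t)\}$ (Lemma~\ref{le:l0}) to bound $\|\widetilde{\mathbf{L}}(t)\otimes\mathbf{I}_M)\mathbf{z}(t)\|\le \|\widetilde{\mathbf{L}}(t)\|\cdot U_t$ with $\sup_t U_t<\infty$ a.s. To match Lemma~\ref{le:l1.1}, I would identify $r_1(t)\sim\beta_t=\Theta(t^{-\tau_1})$ (so $\delta_1=\tau_1$), and set $r_2(t)$ from the noise-driven innovation term, whose per-step forcing is $\Theta(\alpha_t)=\Theta(t^{-1})$ multiplying an i.i.d. martingale-difference/noise factor $J_t$ satisfying the $(2+\epsilon_1)$ moment bound from Assumption~\ref{m:1}; this gives $\delta_2 = 1$. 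The lemma then yields $(t+1)^{\delta_0}\|\mathbf{z}_{\mathcal{C}^\perp}(t)\|\to 0$ a.s.\ for all $\delta_0 < \delta_2-\delta_1-\tfrac{1}{2+\epsilon_1} = 1-\tau_1-\tfrac{1}{2+\epsilon_1}$. Under the standing hypothesis $0<\tau_1\le \tfrac12-\tfrac{1}{2+\epsilon_1}$ this upper bound exceeds $\tfrac12$, so one can pick $\delta_0 = \tfrac12+\delta$ for some $\delta>0$, which is exactly the claimed rate.

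**Main obstacle.** I expect the crux to be justifying inequality \eqref{eq:plan_rec} rigorously: specifically (i) verifying that the innovation term $\alpha_t \mathbf{G}_H\mathbf{\Sigma}^{-1}\mathbf{G}_H^\top$, after projection, contributes only lower-order perturbations that do not destroy the $\beta_t\lambda_2(\overline{\mathbf{L}})$ contraction on $\mathcal{C}^\perp$ — this needs $\alpha_t = o(\beta_t)$, which holds since $\tau_1<1$; and (ii) correctly packaging the non-martingale structure. The innovation term here is \emph{not} a martingale difference (as emphasized in Remark~\ref{rm:1}), so the standard two-time-scale arguments of \citet{kar2013distributed} do not directly apply; the residual $\widetilde{\mathbf{L}}(t)$ forcing must be split into its conditionally-mean-zero part (absorbed into the $J_t$ moment condition) and a cross term controlled by the Cauchy--Schwarz/Young inequality against the $\beta_t$ contraction. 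Once the recursion is cast in the exact form required by Lemma~\ref{le:l1.1}, the conclusion \eqref{eq:l2_1} follows directly from that lemma's pathwise rate guarantee.
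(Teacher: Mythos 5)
Your overall skeleton is the right one and matches the paper's: write the disagreement recursion, extract a contraction at rate $\Theta(\beta_{t})=\Theta(t^{-\tau_{1}})$ on $\mathcal{C}^{\perp}$, package the innovation as a forcing $\alpha_{t}U_{t}(1+J_{t})$ with $\delta_{2}=1$, and invoke Lemma~\ref{le:l1.1} to get every $\delta_{0}<1-\tau_{1}-\tfrac{1}{2+\epsilon_{1}}$, which exceeds $\tfrac12$ when $\tau_{1}<\tfrac12-\tfrac{1}{2+\epsilon_{1}}$. The gap is in your treatment of the residual Laplacian. You propose to split $\mathbf{L}(t)=\beta_{t}\overline{\mathbf{L}}+\widetilde{\mathbf{L}}(t)$, keep only the mean part as the contraction, and absorb $\widetilde{\mathbf{L}}(t)\mathbf{z}(t)$ into the forcing via $\|\widetilde{\mathbf{L}}(t)\|\cdot U_{t}$. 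But pathwise $\widetilde{\mathbf{L}}(t)$ is only $O(\rho_{t}^{2})=O(t^{-\epsilon})$ (an entry equals $-\rho_{t}^{2}+\beta_{t}$ whenever the link fires), and even its standard deviation is $\Theta(t^{-(\tau_{1}+\epsilon)/2})$ by \eqref{eq:laplace_res}; since $\epsilon<\tau_{1}$, both of these decay \emph{strictly slower} than the contraction rate $\beta_{t}=\Theta(t^{-\tau_{1}})$. So if you feed this term into Lemma~\ref{le:l1.1} as $r_{2}(t)U_{t}(1+J_{t})$ you get at best $\delta_{2}=\epsilon<\tau_{1}=\delta_{1}$ and no positive $\delta_{0}$; and the alternative you sketch --- treating the conditionally mean-zero part via the $J_{t}$ moment condition and controlling the cross term by Cauchy--Schwarz against the $\beta_{t}$ contraction --- also fails, both because $J_{t}$ in Lemma~\ref{le:l1.1} must be i.i.d.\ and must enter only through a coefficient bounded by $a_{2}(t+1)^{-\delta_{2}}$ (whereas $\widetilde{\mathbf{L}}(t)$ multiplies the $\mathcal{F}_{t}$-adapted state $\mathbf{z}(t)$, which is merely bounded, not decaying), and because a Young-type absorption needs the perturbation to be dominated by $\beta_{t}$, which is exactly what fails here.

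The paper closes this gap by \emph{not} decomposing the Laplacian in this lemma. It keeps the random operator $\mathbf{I}_{NM}-\mathbf{L}(t)\otimes\mathbf{I}_{M}$ intact and exploits the finiteness of the set $\mathcal{L}_{t}$ of possible Laplacian realizations: for each $\mathbf{z}\in\mathcal{C}^{\perp}$ some realization $\mathbf{L}_{\mathbf{z}}$ satisfies $\mathbf{z}^{\top}\mathbf{L}_{\mathbf{z}}\mathbf{z}\geq \lambda_{2}(\overline{\mathbf{L}}(t))\|\mathbf{z}\|^{2}/|\mathcal{L}_{t}|$, and each realization occurs with probability at least $\underline{p}>0$. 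This yields a \emph{random}, $\mathcal{F}_{t+1}$-adapted contraction coefficient $r_{t}$ with $\|(\mathbf{I}_{NM}-\mathbf{L}(t)\otimes\mathbf{I}_{M})\mathbf{z}_{t}\|\leq(1-r_{t})\|\mathbf{z}_{t}\|$ and $\mathbb{E}[r_{t}\,|\,\mathcal{F}_{t}]\geq \underline{p}\,\beta_{t}\lambda_{2}(\overline{\mathbf{L}})/(4|\mathcal{L}_{t}|)$ --- precisely the conditional-expectation form of $r_{1}(t)$ that Lemma~\ref{le:l1.1} is built to accept. In other words, the lemma requires the contraction to hold only in conditional mean, not pathwise, and that is the device that makes the sparsely activated (often zero) Laplacian usable without ever isolating $\widetilde{\mathbf{L}}(t)$. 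You would need to restructure your argument around such a conditional-mean contraction; as written, the step handling $\widetilde{\mathbf{L}}(t)$ does not go through.
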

\begin{proof}
	Let $\mathcal{L}_{t}$ denote the set of possible Laplacian matrices~(necessarily finite) at time $t$. Note, that the finiteness property of the cardinality of the set $\mathcal{L}_{t}$ holds for all $t$. Since the set of Laplacians is finite, we have,
	\begin{align}
	\label{eq:l2_2}
	\underline{p}=\inf_{\mathbf{L}\in\mathcal{L}_{t}}p_{\mathbf{L}} > 0,
	\end{align}
	with $p_{L}=\mathbb{P}\left(\mathbf{L}(t)=\mathbf{L}\right)$ for each $\mathbf{L}\in\mathcal{L}_{t}$ such that $\sum_{\mathbf{L}\in\mathcal{L}_{t}}p_{\mathbf{L}}=1$.
	Assumption \ref{m:3}, i.e., $\lambda_{2}\left(\overline{\mathbf{L}}(t)\right) > 0$ implies that for every $\mathbf{z}\in\mathcal{C}^{\perp}$, where,
	\begin{align}
	\label{eq:l2_3}
	\mathcal{C} = \left\{\mathbf{x}|\mathbf{x}=\mathbf{1}_{N}\otimes \mathbf{a}, \mathbf{a}\in\mathbb{R}^{M}\right\},
	\end{align}
	we have,
	\begin{align}
	\label{eq:l2_4}
	\sum_{\mathbf{L}\in\mathcal{L}_{t}}\mathbf{z}^{\top}\mathbf{L}\mathbf{z}\geq \sum_{\mathbf{L}\in\mathcal{L}_{t}}\mathbf{z}^{\top}p_{\mathbf{L}}\mathbf{L}\mathbf{z}=\mathbf{z}^{\top}\overline{\mathbf{L}}(t)\mathbf{z}\geq \lambda_{2}\left(\overline{\mathbf{L}}(t)\right)\left\|\mathbf{z}\right\|^{2}.
	\end{align}
	Owing to the finite cardinality of $\mathcal{L}_{t}$ and \eqref{eq:l2_4}, we also have that for each $\mathbf{z}\in\mathcal{C}^{\perp}$,$\exists \mathbf{L}_{\mathbf{z}}\in\mathcal{L}_{t}$ such that,
	\begin{align}
	\label{eq:l2_41}
	\mathbf{z}^{\top}\mathbf{L}_{\mathbf{z}}\mathbf{z}\ge \frac{\lambda_{2}\left(\overline{\mathbf{L}}(t)\right)}{|\mathcal{L}_{t}|}\left\|\mathbf{z}\right\|^{2}
	\end{align}
	Moreover, since $\mathcal{L}_{t}$ is finite, the mapping $L_{\mathbf{z}}:\mathcal{C}^{\perp}\mapsto\mathcal{L}_{t}$ can be realized as a measurable function. It is also to be noted that, $\mathbf{L}(t)=\rho_{t}^{2}\widehat{\mathbf{L}}$, where $\widehat{\mathbf{L}}$ is a Laplacian such that $[\widehat{\mathbf{L}}]_{ij}\in\mathbb{Z}$. For each, $\mathbf{L}\in\mathcal{L}_{t}$, the eigen values of $\mathbf{I}_{NM}-\rho_{t}^{2}\left(\widehat{\mathbf{L}}\otimes\mathbf{I}_{M}\right)$ are given by $M$ repetitions of$1$ and $1-\rho_{t}^{2}\lambda_{n}\left(\widehat{\mathbf{L}}\right)$, where $2\leq n\leq N$.
	Thus, for $t\geq t_{0}$, $\left\|\mathbf{I}_{NM}-\rho_{t}^{2}\left(\widehat{\mathbf{L}}\otimes\mathbf{I}_{M}\right)\right\| \leq 1$ and $\left\|\left(\mathbf{I}_{NM}-\rho_{t}^{2}\left(\widehat{\mathbf{L}}\otimes\mathbf{I}_{M}\right)\right)\mathbf{z}\right\| \leq \left\|\mathbf{z}\right\|$. Hence, we can define a jointly measurable function $r_{\mathbf{L},\mathbf{z}}$ given by,
	\begin{align}
	\label{l2_5}
	r_{\mathbf{L},\mathbf{z}} =
	\begin{cases}
	1 &~~\textit{if}~t<t_{0}~\textit{or}~\mathbf{z}=\mathbf{0}\\
	1-\frac{\left\|\left(\mathbf{I}_{NM}-\rho_{t}^{2}\left(\widehat{\mathbf{L}}\otimes\mathbf{I}_{M}\right)\right)\mathbf{z}\right\|}{\left\|\mathbf{z}\right\|} & ~~\textit{otherwise},
	\end{cases}
	\end{align}
	which satisfies $0\le r_{\mathbf{L},\mathbf{z}} \le 1$ for each $\left(\mathbf{L},\mathbf{z}\right)$.
	\noindent Define $\{r_{t}\}$ to be a $\mathcal{F}_{t+1}$ process given by, $r_{t} = r_{\mathbf{L}(t),\mathbf{z}_{t}}$ for each $t$ and $\left\|\left(\mathbf{I}_{NM}-\rho_{t}^{2}\left(\widehat{\mathbf{L}}\otimes\mathbf{I}_{M}\right)\right)\mathbf{z}_{t}\right\|=(1-r_{t})\left\|\mathbf{z}_{t}\right\|$ a.s. for each $t$.
	\noindent Then, we have,
	\begin{align}
	\label{eq:l2_6}
	&\left\|\left(\mathbf{I}_{NM}-\rho_{t}^{2}\left(\widehat{\mathbf{L}}_{\mathbf{z}_{t}}\otimes\mathbf{I}_{M}\right)\right)\mathbf{z}_{t}\right\|^{2}\nonumber\\&=\mathbf{z}^{\top}_{t}\left(\mathbf{I}_{NM}-2\rho_{t}^{2}\left(\widehat{\mathbf{L}}\otimes\mathbf{I}_{M}\right)\right)\mathbf{z}_{t}\nonumber\\
	&+\mathbf{z}_{t}^{\top}\rho_{t}^{4}\left(\widehat{\mathbf{L}}_{\mathbf{z}_{t}}\otimes\mathbf{I}_{M}\right)^{2}\mathbf{z}_{t}\nonumber\\
	&\leq \left(1-2\beta_{t}\frac{\lambda_{2}\left(\overline{\mathbf{L}}\right)}{|\mathcal{L}_{t}|}\right)\left\|\mathbf{z}_{t}\right\|^{2}+c_{1}\rho_{t}^{4}\left\|\mathbf{z}_{t}\right\|^{2}\nonumber\\
	&\leq \left(1-\beta_{t}\frac{\lambda_{2}\left(\overline{\mathbf{L}}\right)}{|\mathcal{L}_{t}|}\right)\left\|\mathbf{z}_{t}\right\|^{2}
	\end{align}
	where we have used the boundedness of the Laplacian matrix and the fact that $\overline{\mathbf{L}}_{t}=\beta_{t}\overline{\mathbf{L}}$.
	With the above development in place, choosing an appropriate $t_{1}$~(making $t_{0}$ larger if necessary), for all $t\geq t_{1}$, we have,
	\begin{align}
	\label{eq:l2_7}
	\left\|\left(\mathbf{I}_{NM}-\rho_{t}^{2}\left(\widehat{\mathbf{L}}_{\mathbf{z}_{t}}\otimes\mathbf{I}_{M}\right)\right)\mathbf{z}_{t}\right\|\le \left(1-\beta_{t}\frac{\lambda_{2}\left(\overline{\mathbf{L}}\right)}{4|\mathcal{L}_{t}|}\right)\left\|\mathbf{z}_{t}\right\|^{2}.
	\end{align}
	Then, from \eqref{eq:l2_7}, we have,
	\begin{align}
	\label{eq:l2_8}
	&\mathbb{E}\left.\left[\left\|\left(\mathbf{I}_{NM}-\rho_{t}^{2}\left(\widehat{\mathbf{L}}_{\mathbf{z}_{t}}\otimes\mathbf{I}_{M}\right)\right)\mathbf{z}_{t}\right\|\right|\mathcal{F}_{t}\right]\nonumber\\&=\sum_{\mathbf{L}\in\mathcal{L}_{t}}p_{\mathbf{L}}\left(1-r_{\mathbf{L},\mathbf{z}_{t}}\right)\left\|\mathbf{z}_{t}\right\|\nonumber\\
	&\le \left(1-\left(\underline{p}\beta_{t}\frac{\lambda_{2}\left(\overline{\mathbf{L}}\right)}{4|\mathcal{L}_{t}|}+\sum_{\mathbf{L}\neq\mathbf{L}_{\mathbf{z}_{t}}}\right)\right)\left\|\mathbf{z}_{t}\right\|.
	\end{align}
	Since, $\sum_{\mathbf{L}\neq\mathbf{L}_{\mathbf{z}_{t}}} p_{\mathbf{L}}r_{\mathbf{L},\mathbf{z}_{t}}\geq 0$, we have for all $t\ge t_1$,
	\begin{align}
	\label{eq:l2_9}
	&\left(1-\mathbb{E}\left[r_{t}|\mathcal{F}_{t}\right]\right)\left\|\mathbf{z}_{t}\right\| \nonumber\\&= \mathbb{E}\left.\left[\left\|\left(\mathbf{I}_{NM}-\rho_{t}^{2}\left(\widehat{\mathbf{L}}_{\mathbf{z}_{t}}\otimes\mathbf{I}_{M}\right)\right)\mathbf{z}_{t}\right\|\right|\mathcal{F}_{t}\right]\nonumber\\
	&\le \left(1-\underline{p}\beta_{t}\frac{\lambda_{2}\left(\overline{\mathbf{L}}\right)}{4|\mathcal{L}_{t}|}\right)\left\|\mathbf{z}_{t}\right\|.
	\end{align}
	\noindent As $r_{t}=1$ on the set $\{\mathbf{z}_{t}=0\}$, we have that,
	\begin{align}
	\label{eq:l2_10}
	\mathbb{E}\left[r_{t}|\mathcal{F}_{t}\right]\geq \underline{p}\beta_{t}\frac{\lambda_{2}\left(\overline{\mathbf{L}}\right)}{4|\mathcal{L}_{t}|}.
	\end{align}
	Thus, we have established that,
	\begin{align}
	\label{eq:l2_11}
	\left\|\left(\mathbf{I}_{NM}-\left(\mathbf{L}(t)\otimes\mathbf{I}_{M}\right)\right)\mathbf{z}_{t}\right\|\le (1-r_{t})\left\|\mathbf{z}_{t}\right\|,
	\end{align}
	where $\{r_t\}$ is a $\mathbb{R}^{+}$ valued $\mathcal{F}_{t+1}$ process satisfying \eqref{eq:l2_10}.
	With the above development in place, consider the residual process $\{\widetilde{\mathbf{x}}(t)\}$ given by $\widetilde{\mathbf{x}}(t) = \mathbf{x}(t)-\mathbf{x}_{\mbox{avg}}(t)$. Thus, we have that the process $\{\widetilde{\mathbf{x}}(t)\}$ satisfies the recursion,
	\begin{align}
	\label{eq:l2_12}
	\widetilde{\mathbf{x}}(t+1) = \left(\mathbf{I}_{NM}-\mathbf{L}(t)\otimes\mathbf{I}_{M}\right)\widetilde{\mathbf{x}}(t)+\alpha_{t}\widetilde{\mathbf{z}}(t),
	\end{align}
	where the process $\{\widetilde{\mathbf{z}}(t)\}$ is given by
	\begin{align}
	\label{eq:l2_13}
	&\widetilde{\mathbf{z}}(t)=\left(\mathbf{I}_{NM}-\frac{1}{N}\mathbf{1}_{N}\otimes\left(\mathbf{1}_{N}\otimes\mathbf{I}_{M}\right)^{\top}\right)\nonumber\\&\times\mathbf{G}_{H}\mathbf{\Sigma}^{-1}\left(\mathbf{y}(t)-\mathbf{G}_{H}^{\top}\mathbf{x}(t)\right).
	\end{align}
	\noindent From \eqref{eq:l2_13}, we also have,
	\begin{align}
	\label{eq:l2_14}
	\widetilde{\mathbf{z}}(t)=\overline{\mathbf{J}}_{t}+\overline{\mathbf{U}}_{t},
	\end{align}
	where,
	\begin{align}
	\label{eq:l2_141}
	&\overline{\mathbf{J}}_{t}=\left(\mathbf{I}_{NM}-\frac{1}{N}\mathbf{1}_{N}\otimes\left(\mathbf{1}_{N}\otimes\mathbf{I}_{M}\right)^{\top}\right)\nonumber\\&\times\mathbf{G}_{H}\mathbf{\Sigma}^{-1}\left(\mathbf{y}(t)-\mathbf{G}_{H}^{\top}\btheta\right)\nonumber\\
	&\overline{\mathbf{U}}_{t}=\left(\mathbf{I}_{NM}-\frac{1}{N}\mathbf{1}_{N}\otimes\left(\mathbf{1}_{N}\otimes\mathbf{I}_{M}\right)^{\top}\right)\nonumber\\&\times\mathbf{G}_{H}\mathbf{\Sigma}^{-1}\left(\mathbf{G}_{H}^{\top}\btheta-\mathbf{G}_{H}^{\top}\mathbf{x}(t)\right).
	\end{align}
	By Lemma \ref{le:l0}, we also have that, the process $\{\mathbf{x}(t)\}$ is bounded. Hence, there exists an $\mathcal{F}_{t}$-adapted process $\{\widetilde{U}_{t}\}$ such that $\left\|\overline{\mathbf{U}}_{t}\right\|\le \widetilde{U}_{t}$ and $\sup_{t\ge 0}\widetilde{U}_{t} < \infty$ a.s.. Furthermore, denote the process $U_{t}$ as follows,
	\begin{align}
	\label{eq:l2_15}
	U_{t} = \max\left\{\widetilde{U}_{t}, \left\|\mathbf{I}_{NM}-\frac{1}{N}\mathbf{1}_{N}\otimes\left(\mathbf{1}_{N}\otimes\mathbf{I}_{M}\right)^{\top}\right\|\right\}.
	\end{align}
	With the above development in place, we conclude,
	\begin{align}
	\label{eq:l2_15.1}
	\left\|\overline{\mathbf{U}}_{t}\right\|+\left\|\overline{\mathbf{J}}_{t}\right\| \le U_{t}\left(1+J_{t}\right),
	\end{align}
	where $J_{t} = \mathbf{y}(t)-\mathbf{G}_{H}^{\top}\btheta$.
	Then, from \eqref{eq:l2_11}-\eqref{eq:l2_12} and noting that $\widetilde{\mathbf{x}}(t)\in\mathcal{C}^{\perp}$, we have,
	\begin{align}
	\label{eq:l2_16}
	\left\|\widetilde{\mathbf{x}}(t+1)\right\| \leq (1-r_{t})\left\|\widetilde{\mathbf{x}}(t)\right\|+\alpha_{t}U_{t}(1+J_{t}),
	\end{align}
	which then falls under the purview of Lemma \ref{le:l1.1} and hence we have the assertion,
	\begin{align}
	\label{eq:l2_17}
	\mathbb{P}\left(\lim_{t\to\infty}(t+1)^{\delta_0}\left(\mathbf{x}(t)-\mathbf{1}_{N}\otimes\mathbf{x}_{avg}(t)\right)=0\right)=1,
	\end{align}
	where $0<\delta_{0}<1-\tau_{1}$ and hence $\delta_{0}$ can be chosen to be $1/2 + \delta$, where $\delta > 0$ and we finally have,
	\begin{align}
	\label{eq:l2_18}
	\mathbb{P}\left(\lim_{t\to\infty}(t+1)^{\frac{1}{2}+\delta}\left(\mathbf{x}(t)-\mathbf{1}_{N}\otimes\mathbf{x}_{avg}(t)\right)=0\right)=1.
	\end{align}
\end{proof}
\begin{Lemma}
	\label{le:l1}
	Let the Assumptions \ref{m:1}-\ref{m:3} hold. Consider the averaged estimate sequence as in \eqref{eq:xavg}. Then, we have,
	\begin{align}
	\label{eq:l1_1}
	\mathbb{P}_{\btheta}\left(\lim_{t\to\infty}\mathbf{x}_{\mbox{avg}}(t)=\btheta\right)=1.
	\end{align}
\end{Lemma}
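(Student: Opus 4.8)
The plan is to treat the averaged recursion~\eqref{eq:xavg} directly as a low-dimensional ($M$-variate) stochastic approximation scheme for $\btheta$. Writing $\mathbf{e}(t)=\mathbf{x}_{\mbox{avg}}(t)-\btheta$ and substituting the observation model $\mathbf{y}_n(t)=\mathbf{H}_n\btheta+\gamma_n(t)$ into the averaged update, I would first derive an error recursion of the form $\mathbf{e}(t+1)=\left(\mathbf{I}_M-\alpha_t\mathbf{\Gamma}\right)\mathbf{e}(t)-\alpha_t\phi(t)+\alpha_t\nu(t)$, where $\mathbf{\Gamma}=\frac{1}{N}\sum_{n=1}^N\mathbf{H}_n^\top\mathbf{\Sigma}_n^{-1}\mathbf{H}_n$, the consensus-coupling disturbance is $\phi(t)=\frac{1}{N}\sum_{n=1}^N\mathbf{H}_n^\top\mathbf{\Sigma}_n^{-1}\mathbf{H}_n\left(\mathbf{x}_n(t)-\mathbf{x}_{\mbox{avg}}(t)\right)$, and the innovation noise is $\nu(t)=\frac{1}{N}\sum_{n=1}^N\mathbf{H}_n^\top\mathbf{\Sigma}_n^{-1}\gamma_n(t)$. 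The key structural facts to record are: (i) $\mathbf{\Gamma}$ is positive definite by Assumption~\ref{m:2}, so that for $t$ large $\left\|\mathbf{I}_M-\alpha_t\mathbf{\Gamma}\right\|\le 1-\alpha_t\lambda_{\min}(\mathbf{\Gamma})$; (ii) $\gamma_n(t)$ is independent of $\mathcal{F}_t$ with zero mean, hence $\mathbb{E}[\nu(t)\,|\,\mathcal{F}_t]=0$ and, by Assumption~\ref{m:1}, $\mathbb{E}[\|\nu(t)\|^2\,|\,\mathcal{F}_t]$ is uniformly bounded; and (iii) $\|\phi(t)\|\le C\,\|\mathbf{x}(t)-\mathbf{1}_N\otimes\mathbf{x}_{\mbox{avg}}(t)\|$ for a constant $C$ depending only on the $\mathbf{H}_n,\mathbf{\Sigma}_n$.

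Next I would run a Lyapunov/supermartingale argument on $V(t)=\|\mathbf{e}(t)\|^2$. Conditioning on $\mathcal{F}_t$, the noise cross-term vanishes by (ii); bounding $\left\|\mathbf{I}_M-\alpha_t\mathbf{\Gamma}\right\|$ by (i), absorbing the $\mathbf{e}(t)$--$\phi(t)$ cross-term with Young's inequality, and using the variance bound of (ii), I expect to reach, for $t$ large enough, an estimate of the shape $\mathbb{E}[V(t+1)\,|\,\mathcal{F}_t]\le \left(1-\alpha_t\lambda_{\min}(\mathbf{\Gamma})+c_1\alpha_t^2\right)V(t)+c_2\alpha_t\|\phi(t)\|^2+c_3\alpha_t^2$. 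With $\alpha_t=a/(t+1)$ the terms in $\alpha_t^2$ are summable; the crucial input is Lemma~\ref{le:l2}, which gives $(t+1)^{1/2+\delta}\|\mathbf{x}(t)-\mathbf{1}_N\otimes\mathbf{x}_{\mbox{avg}}(t)\|\to 0$ a.s., whence $\alpha_t\|\phi(t)\|^2=o((t+1)^{-2-2\delta})$ is almost surely summable. The recursion is then an almost-supermartingale with summable perturbations, so the Robbins--Siegmund theorem yields that $V(t)$ converges a.s. to a finite limit $V^\ast$ while $\sum_t \alpha_t V(t)<\infty$ a.s.; since $\sum_t\alpha_t=\infty$, this forces $V^\ast=0$, i.e. $\mathbf{x}_{\mbox{avg}}(t)\to\btheta$ a.s.

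The main obstacle is the consensus-coupling term $\phi(t)$: unlike in single-agent stochastic approximation (or in the proof of Lemma~\ref{le:conv}, where the agreement-subspace contribution is sign-definite and can be dropped), here $\phi(t)$ is a genuine exogenous disturbance to the averaged dynamics, and convergence hinges on it decaying strictly faster than $t^{-1/2}$ so that $\alpha_t\|\phi(t)\|^2$ is summable. This is exactly the content of Lemma~\ref{le:l2}, so the proof is really an exercise in correctly feeding that consensus rate into the Lyapunov recursion. I would favour the almost-sure Robbins--Siegmund route over taking expectations and invoking Lemma~\ref{int_res_0}, because the latter would require an $L^2$ rate on the consensus error, whereas Lemma~\ref{le:l2} only supplies an almost-sure rate.

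Finally, I note a shortcut: since Lemma~\ref{le:conv} already establishes $\mathbf{x}_n(t)\to\btheta$ a.s. for each of the finitely many $n$, intersecting these almost-sure events immediately gives $\mathbf{x}_{\mbox{avg}}(t)=\frac{1}{N}\sum_{n=1}^N\mathbf{x}_n(t)\to\btheta$ a.s. The direct argument above is nonetheless worth recording, because it isolates the averaged $M$-dimensional dynamics on which the subsequent asymptotic-normality analysis is built.
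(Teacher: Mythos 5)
Your proposal is correct and its skeleton matches the paper's: you derive the same $M$-dimensional error recursion $\mathbf{e}(t+1)=(\mathbf{I}_M-\alpha_t\mathbf{\Gamma})\mathbf{e}(t)-\alpha_t\phi(t)+\alpha_t\nu(t)$, rely on Lemma~\ref{le:l2} to control the consensus-coupling disturbance, and run a Lyapunov argument on $\|\mathbf{e}(t)\|^2$. Where you genuinely diverge is in how the almost-sure (but not a priori $L^2$) decay of $\phi(t)$ is converted into a convergence statement. The paper resolves exactly the obstacle you identify by invoking Egorov's theorem: it replaces $\mathbf{U}_t$ (your $\phi(t)$) by a uniformly bounded surrogate $\mathbf{U}_t^{\delta}$ agreeing with it on a set of probability at least $1-\delta$, which makes the perturbation deterministically $O(\epsilon(t+1)^{-\delta_0})$; it can then take expectations, apply the deterministic recursion Lemma~\ref{int_res_0} to get $\mathbb{E}_{\btheta}[V_t^{\delta}]\to 0$, and finish with Fatou's lemma, letting $\delta\downarrow 0$ at the end. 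Your route instead keeps the recursion pathwise and applies Robbins--Siegmund: since $\phi(t)$ is $\mathcal{F}_t$-measurable and $\alpha_t\|\phi(t)\|^2=o((t+1)^{-2-2\delta})$ is a.s.\ summable by Lemma~\ref{le:l2}, the almost-supermartingale theorem gives a.s.\ convergence of $V(t)$ together with $\sum_t\alpha_tV(t)<\infty$, and $\sum_t\alpha_t=\infty$ forces the limit to be zero. Both arguments are sound; yours avoids the truncation-and-limit-in-$\delta$ bookkeeping entirely, while the paper's Egorov construction is reused downstream (the processes $\mathbf{z}_t^{\delta}$, $\mathbf{U}_t^{\delta}$ reappear in the rate analysis in the proof of Theorem~\ref{th:cons}), so the truncation is not wasted effort there. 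Your closing observation is also valid: since Lemma~\ref{le:conv} already gives $\mathbf{x}_n(t)\to\btheta$ a.s.\ for each of the finitely many $n$, the statement of Lemma~\ref{le:l1} follows by averaging; the direct analysis is retained in the paper because the averaged recursion and its disturbance structure are the objects fed into the asymptotic-normality machinery of Theorem~\ref{th:2}.
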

\begin{proof}
	\noindent Define, the residual sequence, $\{\mathbf{z}_{t}\}$, where $\mathbf{z}(t)=\mathbf{x}_{\mbox{avg}}(t)-\btheta$, which can be then shown to satisfy the recursion
	\begin{align}
	\label{eq:l1_2}
	\mathbf{z}_{t+1}=\left(\mathbf{I}_{M}-\alpha_{t}\Gamma\right)\mathbf{z}_{t}+\alpha_{t}\mathbf{U}_{t}+\alpha_{t}\mathbf{J}_{t},
	\end{align}
	where
	\begin{align}
	\label{eq:l1_3}
	&\Gamma = \frac{1}{N}\sum_{n=1}^{N}\mathbf{H}_{n}^{\top}\mathbf{\Sigma}_{n}^{-1}\mathbf{H}_{n}\nonumber\\
	&\mathbf{U}_{t} = \frac{1}{N}\sum_{n=1}^{N}\mathbf{H}_{n}^{\top}\mathbf{\Sigma}_{n}^{-1}\left(\mathbf{x}_{n}(t)-\mathbf{x}_{\mbox{avg}}(t)\right)\nonumber\\
	&\mathbf{J}_{t}=\frac{1}{N}\sum_{n=1}^{N}\mathbf{H}_{n}^{\top}\mathbf{\Sigma}_{n}^{-1}\mathbf{\gamma}_{n}(t).
	\end{align}	
	From, Lemma \ref{le:l2}, we have that,
	\begin{align}
	\label{eq:l1_4}
	\mathbb{P}\left(\lim_{t\to\infty}(t+1)^{\delta_0}\left(\mathbf{x}(t)-\mathbf{1}_{N}\otimes\mathbf{x}_{avg}(t)\right)=0\right)=1,
	\end{align}
	where $0<\delta_{0}<1-\tau_{1}$. Fix, a $\delta_{0}$ and then by convergence of $(t+1)^{\delta_0}\mathbf{U}_{t}\to 0$~a.s. as $t\to\infty$ and Egorov's theorem, the a.s. convergence may be assumed to be uniform on sets of arbitrarily large probability measure and hence for every $\delta > 0$, there exists uniformly bounded process $\{\mathbf{U}_{t}^{\delta}\}$ satisfying,
	\begin{align}
	\label{eq:l1_5}
	\mathbb{P}_{\btheta}\left(\sup_{s\geq t_{\epsilon}^{\delta}}(s+1)^{\delta_{0}}\left\|\mathbf{U}_{t}^{\delta}\right\|>\epsilon\right) = 0,
	\end{align}
	for each $\epsilon > 0$ and some $t_{\epsilon}^{\delta}$ chosen appropriately large enough such that
	\begin{align}
	\label{eq:l1_6}
	\mathbb{P}_{\btheta}\left(\sup_{t\geq 0}\left\|\mathbf{U}_{t}^{\delta}-\mathbf{U}_{t}\right\|=0\right)>1-\delta.
	\end{align}
	With the above development in the place, for each $\delta > 0$, define the $\mathcal{F}_{t}$-adapted process $\{\mathbf{z}_{t}^{\delta}\}$ which satisfies the recursion
	\begin{align}
	\label{eq:l1_7}
	\mathbf{z}^{\delta}_{t+1}=\left(\mathbf{I}_{M}-\alpha_{t}\Gamma\right)\mathbf{z}^{\delta}_{t}+\alpha_{t}\mathbf{U}^{\delta}_{t}+\alpha_{t}\mathbf{J}_{t}, \mathbf{z}^{\delta}_{0}=\mathbf{z}_{0},
	\end{align}
	and
	\begin{align}
	\label{eq:l1_8}
	\mathbb{P}_{\btheta}\left(\sup_{t\geq 0}\left\|\mathbf{z}^{\delta}_{t}-\mathbf{z}_{t}\right\|=0\right)>1-\delta.
	\end{align}
	\noindent It is to be noted that, in order to show that $\mathbf{z}_{t}\to 0$ as $t\to\infty$, it suffices to show that $\mathbf{z}_{t}^{\delta}\to 0$ for each $\delta > 0$. We now focus on the process $\{\mathbf{z}_{t}^{\delta}\}$ for a fixed but arbitrary $\delta > 0$.
	\noindent Let $\{V_{t}^{\delta}\}$ denote the $\mathcal{F}_{t}$-adapted process such that $V_{t}^{\delta}=\left\|\mathbf{z}_{t}^{\delta}\right\|^{2}$. Then, we have,
	\begin{align}
	\label{eq:l1_9}
	&\mathbb{E}_{\btheta}\left[V_{t+1}^{\delta}\right] \le \left\|\mathbf{I}_{M}-\alpha_{t}\mathbf{\Gamma}\right\|^{2}V_{t}^{\delta}+2\alpha_{t}\left(\mathbf{U}_{t}^{\delta}\right)^{\top}\left(\mathbf{I}_{M}-\alpha_{t}\mathbf{\Gamma}\right)\mathbf{z}_{t}^{\delta}\nonumber\\
	&+\alpha_{t}^{2}\left\|\mathbf{U}_{t}^{\delta}\right\|^{2}+\alpha_{t}^{2}\left.\mathbb{E}_{\btheta}\left[\left\|\mathbf{J}_{t}\right\|^{2}\right|\mathcal{F}_{t}\right].
	\end{align}
	\noindent For large enough $t$, we have,
	\begin{align}
	\label{eq:l1_10}
	&\left\|2\alpha_{t}\left(\mathbf{U}_{t}^{\delta}\right)^{\top}\left(\mathbf{I}_{M}-\alpha_{t}\mathbf{\Gamma}\right)\mathbf{z}_{t}^{\delta}\right\|\le 2\alpha_{t}\left\|\mathbf{U}_{t}^{\delta}\right\|\left\|\mathbf{z}_{t}^{\delta}\right\|\nonumber\\&\le 2\alpha_{t}\left\|\mathbf{U}_{t}^{\delta}\right\|\left\|\mathbf{z}_{t}^{\delta}\right\|^{2}+2\alpha_{t}\left\|\mathbf{U}_{t}^{\delta}\right\|.
	\end{align}
	We note that $\left.\mathbb{E}_{\btheta}\left[\left\|\mathbf{J}_{t}\right\|^{2}\right|\mathcal{F}_{t}\right]$ is bounded and making $t_{\epsilon}^{\delta}$ larger if necessary in order to ensure $\left\|\mathbf{U}_{t}^{\delta}\right\|\le \epsilon (t+1)^{-\delta_{0}}$, it follows that $\exists c_{1}, c_2$ such that
	\begin{align}
	\label{eq:l1_11}
	&\mathbb{E}_{\btheta}\left[V_{t+1}^{\delta}\right] \le \left(1-c_{1}\alpha_{t}+c_{2}\alpha_{t}(t+1)^{-\delta_{0}}\right)V_{t}^{\delta}\nonumber\\&+c_{2}\left(\alpha_{t}(t+1)^{-\delta_{0}}+\alpha_{t}^{2}(t+1)^{-2\delta_{0}}+\alpha_{t}^{2}\right)\nonumber\\
	&\le \left(1-c_{3}\alpha_{t}\right)V_{t}^{\delta}+c_{4}\alpha_{t}(t+1)^{-\delta_{0}}\le V_{t}^{\delta}+c_{4}\alpha_{t}(t+1)^{-\delta_{0}}
	\end{align}
	which is ensured by making $c_{4}>c_{2}$ and $c_{3} < c_{1}$ respectively. As the process $\{\alpha_{t}(t+1)^{-\delta_{0}}\}$ is summable, the process $\{\overline{V}_{t}^{\delta}\}$ given by,
	\begin{align}
	\label{eq:l1_12}
	\overline{V}_{t}^{\delta} = V_{t}^{\delta}+c_{4}\sum_{s=t}^{\infty}\alpha_{s}(s+1)^{-\delta_{0}},
	\end{align}
	is bounded from above. Thus, we have that $\{\overline{V}_{t}^{\delta}\}_{t\geq t_{\epsilon}^{\delta}}$ is a supermartingale and hence converges to a finite random variable. From \eqref{eq:l1_12}, we have that the process $\{V_{t}^{\delta}\}$ converges to a finite random variable $V^{\delta}$. We also have from \eqref{eq:l1_11}, for $t\geq t_{\epsilon}^{\delta}$
	\begin{align}
	\label{eq:l1_13}
	\mathbb{E}_{\btheta}\left[V_{t+1}^{\delta}\right] \le \left(1-c_{3}\alpha_{t}\right)\mathbb{E}_{\btheta}\left[V_{t}^{\delta}\right]+c_{4}\alpha_{t}(t+1)^{-\delta_{0}}..
	\end{align}
	\noindent Since $\delta_0 > 0$, the recursion in \eqref{eq:l1_13} falls under the purview of Lemma \ref{int_res_0} and thus we have, $\mathbb{E}_{\btheta}\left[V_{t}^{\delta}\right]\to 0$ as $t\to\infty$. The sequence $\{V_{t}^{\delta}\}$ is non-negative, so by Fatou's Lemma, we have,
	\begin{align}
	\label{eq:l1_14}
	0\leq \mathbb{E}_{\btheta}\left[V^{\delta}\right] \le \liminf_{t\to\infty}\mathbb{E}_{\btheta}\left[V_{t}^{\delta}\right] = 0.
	\end{align}
	\noindent Hence $V^{\delta}=0$ a.s. and thus $\left\|\mathbf{z}_{t}^{\delta}\right\|\to 0$ as $t\to\infty$ and the assertion follows.		
\end{proof}
We will use the following approximation result~(Lemma \ref{l33}) and the generalized convergence criterion~(Lemma \ref{l34}) for the proof of Theorem \ref{th:cons}.
\noindent\begin{Lemma}[Lemma 4.3 in \citet{Fabian-1}]
	\label{l33}
	Let $\{b_{t}\}$ be a scalar sequence satisfying
\begin{align}
		\label{eq:l33_1}
		b_{t+1}\le \left(1-\frac{c}{t+1}\right)b_{t}+d_{t}(t+1)^{-\tau},
		\end{align}
	where $c > \tau, \tau > 0$, and the sequence $d_{t}$ is summable. Then, we have,
\begin{align}
		\label{eq:l33_2}
		\limsup_{t\to\infty}~(t+1)^{\tau}b_{t}<\infty.
		\end{align}
\end{Lemma}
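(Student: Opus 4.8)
The plan is to rescale the sequence and reduce the claim to a standard boundedness statement for a linearly forced recursion. Define $a_{t}=(t+1)^{\tau}b_{t}$; the assertion $\limsup_{t\to\infty}(t+1)^{\tau}b_{t}<\infty$ is precisely the statement that $\{a_{t}\}$ is bounded from above. Since we seek only an \emph{upper} bound, the sign of $b_{t}$ never causes trouble: a very negative $b_{t}$ only helps, because the multiplier $1-c/(t+1)$ is positive. First I would multiply the hypothesis \eqref{eq:l33_1} by $(t+2)^{\tau}>0$ and rewrite the right-hand side in terms of $a_{t}$, obtaining $a_{t+1}\le \phi_{t}\,a_{t}+\widetilde d_{t}$, where $\phi_{t}=\bigl(1-\tfrac{c}{t+1}\bigr)\bigl(\tfrac{t+2}{t+1}\bigr)^{\tau}$ and $\widetilde d_{t}=\bigl(\tfrac{t+2}{t+1}\bigr)^{\tau}d_{t}$.

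The crux of the argument is the asymptotic size of $\phi_{t}$. Expanding $\bigl(\tfrac{t+2}{t+1}\bigr)^{\tau}=\bigl(1+\tfrac{1}{t+1}\bigr)^{\tau}=1+\tfrac{\tau}{t+1}+O(t^{-2})$ and multiplying out gives $\phi_{t}=1+\tfrac{\tau-c}{t+1}+O(t^{-2})$. This is exactly where the hypothesis $c>\tau$ is indispensable: it makes the leading correction $\tfrac{\tau-c}{t+1}$ strictly negative, so that the $O(t^{-2})$ remainder is dominated and, choosing any $0<c'<c-\tau$, there is a $t_{0}$ with $0\le \phi_{t}\le 1-\tfrac{c'}{t+1}$ for all $t\ge t_{0}$. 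Simultaneously $\bigl(\tfrac{t+2}{t+1}\bigr)^{\tau}\le 2^{\tau}$, so $\{\widetilde d_{t}\}$ is again (absolutely) summable. Thus the rescaled recursion reads $a_{t+1}\le \bigl(1-\tfrac{c'}{t+1}\bigr)a_{t}+\widetilde d_{t}$ for $t\ge t_{0}$, with $c'>0$ and $\sum_{t}|\widetilde d_{t}|<\infty$.

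It remains to show such a recursion keeps $\{a_{t}\}$ bounded above. I would iterate it directly: for $t>t_{0}$, $a_{t}\le \Big(\prod_{s=t_{0}}^{t-1}\big(1-\tfrac{c'}{s+1}\big)\Big)a_{t_{0}}+\sum_{s=t_{0}}^{t-1}\Big(\prod_{k=s+1}^{t-1}\big(1-\tfrac{c'}{k+1}\big)\Big)\widetilde d_{s}$, the composition being legitimate because each factor $1-\tfrac{c'}{s+1}$ is nonnegative for $s\ge t_{0}$. The first term is bounded (indeed it tends to $0$, since $\prod_{s}\big(1-\tfrac{c'}{s+1}\big)\le \exp\big(-c'\sum_{s}\tfrac{1}{s+1}\big)\to 0$). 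In the second term every inner product is at most $1$, so it is bounded by $\sum_{s=t_{0}}^{\infty}|\widetilde d_{s}|<\infty$. Hence $\limsup_{t}a_{t}<\infty$, which is the claim.

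The main obstacle is the passage in the middle paragraph: one must verify that the geometric correction $\big(\tfrac{t+2}{t+1}\big)^{\tau}$ does not destroy the contraction, and it is precisely the margin $c-\tau>0$ that guarantees an effective decay rate $c'>0$ survives after this correction. A secondary but genuine point is that Lemma~\ref{int_res_0} cannot be invoked for the final boundedness step as stated: its forcing sequence must obey $r_{2}(t)\le a_{2}(t+1)^{-\delta_{2}}$, whereas here $\widetilde d_{t}$ is only summable (it may spike at sparse indices), so the elementary product-sum estimate above is needed in its place.
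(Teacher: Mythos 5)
The paper does not prove this lemma at all: it is imported verbatim as Lemma~4.3 of \citet{Fabian-1}, so there is no in-paper argument to compare against. Your self-contained proof is essentially correct and is the standard way to establish the result: rescale by $(t+1)^{\tau}$, absorb the factor $\bigl(\tfrac{t+2}{t+1}\bigr)^{\tau}=1+\tfrac{\tau}{t+1}+O(t^{-2})$ into the contraction coefficient, use the margin $c-\tau>0$ to retain an effective rate $c'>0$, and then unroll the resulting linear inequality against a summable forcing term. Your closing observation is also apt: Lemma~\ref{int_res_0} in the paper cannot substitute for this lemma because its forcing sequence must decay polynomially, whereas here $d_t$ is only summable, and indeed the paper keeps both lemmas precisely for that reason (Lemma~\ref{l33} is what delivers the $O(1/t)$ rates in \eqref{eq:thc_10} and \eqref{eq:lc_pr_15}).

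One small wrinkle you should repair: the step from $a_{t+1}\le\phi_{t}a_{t}+\widetilde d_{t}$ to $a_{t+1}\le\bigl(1-\tfrac{c'}{t+1}\bigr)a_{t}+\widetilde d_{t}$ uses $\phi_{t}\le 1-\tfrac{c'}{t+1}$, which only transfers through multiplication when $a_{t}\ge 0$; for $a_{t}<0$ the inequality reverses. The fix is immediate and in the spirit of your own remark that negative values ``only help'': either iterate with $\phi_{t}$ itself (all factors are nonnegative for $t\ge t_{0}$, so the unrolled products are in $[0,1]$ and the same bounds $\bigl(\prod\phi_{s}\bigr)a_{t_{0}}\le|a_{t_{0}}|$ and $\bigl(\prod\phi_{k}\bigr)\widetilde d_{s}\le|\widetilde d_{s}|$ go through), or run the recursion on $a_{t}^{+}=\max(a_{t},0)$, which satisfies $a_{t+1}^{+}\le\bigl(1-\tfrac{c'}{t+1}\bigr)a_{t}^{+}+|\widetilde d_{t}|$. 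With that adjustment the argument is complete.
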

\noindent\begin{Lemma}[Lemma 10 in \citet{dubins1965sharper}]
	\label{l34}
	Let $\{J(t)\}$ be an $\mathbb{R}$-valued $\{\mathcal{F}_{t+1}\}$-adapted process such that $\mathbb{E}\left[J(t)|\mathcal{F}_{t}\right]]=0$ a.s. for each $t\geq 1$. Then the sum $\sum_{t\geq 0}J(t)$ exists and is finite a.s. on the set where $\sum_{t\geq 0}\mathbb{E}\left[J(t)^{2}|\mathcal{F}_{t}\right]$ is finite.
\end{Lemma}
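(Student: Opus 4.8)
The plan is to recognize that $\{J(t)\}$ is a martingale difference sequence and that $\sum_{t\geq 0}\mathbb{E}[J(t)^{2}\,|\,\mathcal{F}_{t}]$ is precisely the \emph{predictable quadratic variation} of the associated partial-sum martingale; the assertion then reduces to the classical fact that a martingale converges almost surely on the set where its predictable quadratic variation is finite. First I would form the partial sums $M_{n}=\sum_{t=0}^{n-1}J(t)$, which is an $\{\mathcal{F}_{n}\}$-martingale since each $J(t)$ is $\mathcal{F}_{t+1}$-measurable and $\mathbb{E}[J(t)\,|\,\mathcal{F}_{t}]=0$, and introduce the predictable increasing process $\langle M\rangle_{n}=\sum_{t=0}^{n-1}\mathbb{E}[J(t)^{2}\,|\,\mathcal{F}_{t}]$. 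The target event is $A=\{\langle M\rangle_{\infty}<\infty\}$.

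The key device is \emph{localization}. For each integer $K\geq 1$ I would define the stopping time $\tau_{K}=\inf\{n\geq 0:\langle M\rangle_{n+1}>K\}$. Because each increment $\mathbb{E}[J(t)^{2}\,|\,\mathcal{F}_{t}]$ is $\mathcal{F}_{t}$-measurable, $\langle M\rangle_{n+1}$ is $\mathcal{F}_{n}$-measurable, so $\{\tau_{K}\leq n\}\in\mathcal{F}_{n}$ and $\tau_{K}$ is a genuine $\{\mathcal{F}_{n}\}$-stopping time. The stopped process $M_{n\wedge\tau_{K}}$ is again a martingale, and by the minimality of $\tau_{K}$ its predictable quadratic variation satisfies $\langle M\rangle_{n\wedge\tau_{K}}\leq K$. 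Using orthogonality of martingale increments together with monotone convergence, $\mathbb{E}[M_{n\wedge\tau_{K}}^{2}]=\mathbb{E}[\langle M\rangle_{n\wedge\tau_{K}}]\leq K$, so $\{M_{n\wedge\tau_{K}}\}_{n}$ is bounded in $L^{2}$; by Doob's $L^{2}$ martingale convergence theorem it converges almost surely as $n\to\infty$.

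Finally I would patch the localized statements together. By construction $\{\tau_{K}=\infty\}=\{\langle M\rangle_{\infty}\leq K\}$, and on this event $M_{n\wedge\tau_{K}}=M_{n}$ for every $n$, so $M_{n}=\sum_{t=0}^{n-1}J(t)$ converges almost surely on $\{\tau_{K}=\infty\}$. Since $A=\{\langle M\rangle_{\infty}<\infty\}=\bigcup_{K\geq 1}\{\tau_{K}=\infty\}$ is a countable increasing union, almost sure convergence of $M_{n}$ propagates to all of $A$, which is exactly the claimed conclusion that $\sum_{t\geq 0}J(t)$ exists and is finite a.s. on $A$.

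The hard part will not be any single estimate but the bookkeeping that makes the localization legitimate: verifying that $\tau_{K}$ is a bona fide stopping time forces the index shift to $n+1$ and relies on $\mathcal{F}_{t}$-measurability of the conditional second moments, and one must be careful that the $L^{2}$ bound for the stopped martingale is \emph{uniform in} $n$ so that Doob's theorem can be invoked on the whole event $\{\tau_{K}=\infty\}$ rather than merely on $\{\tau_{K}>n\}$. Note that no integrability of $J(t)$ beyond the conditional-second-moment hypothesis is required, precisely because the localization confines the analysis to the region where $\langle M\rangle$ is controlled.
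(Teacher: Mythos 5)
Your proof is correct. Note, however, that the paper does not prove this lemma at all: it is imported verbatim as Lemma 10 of \citet{dubins1965sharper} and used as a black box, so there is no internal argument to compare against. What you have written is the standard localization proof of the ``local convergence theorem'' for martingales: form the partial-sum martingale $M_n=\sum_{t<n}J(t)$, stop it at $\tau_K=\inf\{n:\langle M\rangle_{n+1}>K\}$ (a legitimate stopping time because $\langle M\rangle_{n+1}$ is $\mathcal{F}_n$-measurable), obtain the uniform $L^2$ bound $\mathbb{E}[M_{n\wedge\tau_K}^2]=\mathbb{E}[\langle M\rangle_{n\wedge\tau_K}]\le K$, apply Doob's $L^2$ convergence theorem, and exhaust $\{\langle M\rangle_\infty<\infty\}$ by the increasing union $\bigcup_K\{\tau_K=\infty\}$. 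All the delicate points are handled correctly: the index shift that makes $\tau_K$ predictable-measurable, the identity $\mathbb{E}\bigl[(J(t)\mathbb{1}_{\{\tau_K>t\}})^2\,\big|\,\mathcal{F}_t\bigr]=\mathbb{1}_{\{\tau_K>t\}}\mathbb{E}[J(t)^2\,|\,\mathcal{F}_t]$ that makes the stopped increments square-integrable without assuming more than the conditional second-moment hypothesis, and the uniformity in $n$ of the $L^2$ bound. This is a complete and self-contained justification of a statement the paper merely cites.
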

Lemma \ref{le:l1} establishes the almost sure convergence of the averaged estimate sequence $\{\mathbf{x}_{\mbox{avg}}(t)\}$ to the true underlying parameter. We now establish the order optimal convergence of the estimate sequence in terms of $t$.
\begin{proof}[Proof of Theorem \ref{th:cons}]
	\noindent We first analyze the rate of convergence of the process $\{\mathbf{z}_{t}^{\delta}\}$ as developed in Lemma \ref{le:l1} and note that the rate of convergence of the process $\{\mathbf{z}_{t}^{\delta}\}$ suffices for the rate of convergence of the process $\{\mathbf{z}_{t}\}$. For each $\delta > 0$, recall the process $\{\mathbf{z}_{t}^{\delta}\}$ as in \eqref{eq:l1_2}-\eqref{eq:l1_7}. Let $\overline{\tau}\in[0,1/2)$ be such that,
	\begin{align}
	\label{eq:thc_1}
	\mathbb{P}_{\btheta}\left(\lim_{t\to\infty}(t+1)^{\overline{\tau}}\left\|\mathbf{z}_{t}^{\delta}\right\|=0\right)=1.
	\end{align}
	It is to be noted that such a $\overline{\tau}$ always exists from Lemma \ref{le:l1}. We now focus on showing that there exists $\tau$ such that $\overline{\tau}<\tau <1/2$ for which the assertion holds. Define $\widetilde{\tau}\in (\tau,1/2)$ and $\mu =\frac{1}{2}(\overline{\tau}+\widetilde{\tau})$. Then, for each $\delta > 0$,
	\begin{align}
	\label{eq:thc_2}
	&\left\|\mathbf{z}_{t+1}^{\delta}\right\|^{2} \le \left\|\mathbf{I}_{M}-\alpha_{t}\mathbf{\Gamma}\right\|^{2}\left\|\mathbf{z}_{t}^{\delta}\right\|^{2}+\alpha_{t}^{2}\left\|\mathbf{U}^{\delta}_{t}\right\|^{2}\nonumber\\&+\alpha_{t}^{2}\left\|\mathbf{J}_{t}\right\|^{2}\nonumber\\
	&+2\alpha_{t}\left(\mathbf{z}_{t}^{\delta}\right)^{\top}\left(\mathbf{I}_{M}-\alpha_{t}\mathbf{\Gamma}\right)\mathbf{J}_{t}\nonumber\\&+2\alpha_{t}\left\|\mathbf{U}^{\delta}_{t}\right\|\left(\left\|\mathbf{I}_{M}-\alpha_{t}\mathbf{\Gamma}\right\|\left\|\mathbf{z}_{t}^{\delta}\right\|+\alpha_{t}\left\|\mathbf{J}_{t}\right\|\right).
	\end{align}
	We have that, $1 > \tau_{1}+\frac{1}{2+\epsilon_{1}}+\frac{1}{2}$, hence the process $\{\mathbf{U}_{t}^{\delta}\}$ may be chosen such that, $\left\|\mathbf{U}_{t}^{\delta}\right\|= o\left((t+1)^{-1/2}\right)$. Moreover, as $\left\|\mathbf{z}_{t}^{\delta}\right\|= o\left((t+1)^{-\overline{\tau}}\right)$, we have,
	\begin{align}
	\label{eq:thc_3}
	2\alpha_{t}\left\|\mathbf{U}^{\delta}_{t}\right\|\left\|\mathbf{I}_{M}-\alpha_{t}\mathbf{\Gamma}\right\|\left\|\mathbf{z}_{t}^{\delta}\right\| = o\left((t+1)^{-3/2-\overline{\tau}}\right).
	\end{align}
	From Assumption \ref{m:1}, we have that,
	\begin{align}
	\label{eq:thc_4}
	\mathbb{P}_{\btheta}\left(\lim_{t\to\infty}(t+1)^{-1/2-\epsilon}\left\|\mathbf{J}_{t}^{\delta}\right\|\right)=1,~\textit{for~each}~\epsilon > 0,
	\end{align}
	and hence we conclude that
	\begin{align}
	\label{eq:thc_5}
	2\alpha_{t}^{2}\left\|\mathbf{U}^{\delta}_{t}\right\|\left\|\mathbf{J}_{t}\right\| = o\left((t+1)^{-3/2-\overline{\tau}}\right).
	\end{align}
	Since, $2\mu=\overline{\tau}+\widetilde{\tau}$ and $\widetilde{\tau}<1/2$, we have the following conclusions
	\begin{align}
	\label{eq:thc_6}
	&\sum_{t\geq 0}(t+1)^{2\mu}\alpha_{t}\left\|\mathbf{U}^{\delta}_{t}\right\|\left\|\mathbf{I}_{M}-\alpha_{t}\mathbf{\Gamma}\right\|\left\|\mathbf{z}_{t}^{\delta}\right\| <\infty \nonumber\\
	&\sum_{t\geq 0}(t+1)^{2\mu}\alpha_{t}^{2}\left\|\mathbf{U}^{\delta}_{t}\right\|\left\|\mathbf{J}_{t}\right\|<\infty \nonumber\\
	&\sum_{t\geq 0}(t+1)^{2\mu}\alpha_{t}^{2}\left\|\mathbf{U}^{\delta}_{t}\right\|^{2} < \infty\nonumber\\
	&\sum_{t\geq 0}(t+1)^{2\mu}\alpha_{t}^{2}\left\|\mathbf{J}_{t}\right\|^{2} < \infty.
	\end{align}
	With the above development in place, let $\{W_{t}^{\delta}\}$ denote the $\mathcal{F}_{t+1}$-adapted sequence given by
	\begin{align}
	\label{eq:thc_7}
	W_{t}^{\delta}=\alpha_{t}\left(\mathbf{z}_{t}^{\delta}\right)^{\top}\left(\mathbf{I}-\alpha_{t}\mathbf{\Gamma}\right)\mathbf{J}_{t},
	\end{align}
	where $\mathbb{E}_{\btheta}\left.\left[W_{t}^{\delta}\right|\mathcal{F}_{t}\right]=0$ and for $t$ chosen sufficiently large, we have that,
	\begin{align}
	\label{eq:thc_8}
	&\mathbb{E}_{\btheta}\left.\left[\left(W_{t}^{\delta}\right)^{2}\right|\mathcal{F}_{t}\right] = o\left((t+1)^{-2-2\overline{\tau}}\right)\nonumber\\
	&\Rightarrow \mathbb{E}_{\btheta}\left.\left[(t+1)^{4\mu}\left(W_{t}^{\delta}\right)^{2}\right|\mathcal{F}_{t}\right] =o\left((t+1)^{-2-2\overline{\tau}+4\mu}\right)\nonumber\\&=o\left((t+1)^{-2+2\widetilde{\tau}}\right).
	\end{align}
	Since, $2\widetilde{\tau}<1$, the sequence $\mathbb{E}_{\btheta}\left.\left[(t+1)^{4\mu}\left(W_{t}^{\delta}\right)^{2}\right|\mathcal{F}_{t}\right]$ is summable and by Lemma \ref{l34}, $\sum_{t\geq0}(t+1)^{2\mu}W_{t}^{\delta}$ exists. It may be shown that as $\alpha_{t}\to 0$ as $t\to\infty$,
	\begin{align}
	\label{eq:thc_9}
	\left\|\mathbf{I}-\alpha_{t}\mathbf{\Gamma}\right\|^{2} \le 1-c_{1}\alpha_{t},
	\end{align}
	where $c_{1}=\lambda_{min}\left(\mathbf{\Gamma}\right)$.
	Then, from \eqref{eq:thc_2}, we have,
	\begin{align}
	\label{eq:thc_10}
	\left\|\mathbf{z}_{t+1}^{\delta}\right\|^{2} \le \left(1-c_{1}\alpha_{t}\right)\left\|\mathbf{z}_{t}^{\delta}\right\|^{2}+d_{t}(t+1)^{-2\mu},
	\end{align}
	where the term $d_{t}(t+1)^{-2\mu}$ represents all the residual terms in \eqref{eq:thc_2}. The fact that $\lim_{t\to\infty}\sum_{s=0}^{t}d_{s}$ exists and is finite in conjunction with $c_{1}\alpha_{t}(t+1)\geq 1\geq 2\mu$~(from Assumption \ref{m:4}) brings \eqref{eq:thc_10} under the purview of Lemma \ref{l33} and yields
	\begin{align}
	\label{eq:thc_11}
	\limsup_{t\to\infty}(t+1)^{2\mu}\left\|\mathbf{z}_{t}^{\delta}\right\|^{2} < \infty~a.s.,
	\end{align}
	which leads to the conclusion that there exists $\tau$ with $\overline{\tau}<\tau<\mu$, such that $(t+1)^{\tau}\left\|\mathbf{z}_{t}^{\delta}\right\|\to 0$ as $t\to\infty$. The fact that the above development holds for all $\delta > 0$, we conclude that $(t+1)^{\tau}\left\|\mathbf{z}_{t}\right\|\to 0$ as $t\to\infty$. Hence, for every $\overline{\tau}$ for which
	\begin{align}
	\label{eq:thc_12}
	\mathbb{P}_{\btheta}\left(\lim_{t\rightarrow\infty}(t+1)^{\overline{\tau}}\|\mathbf{x}_{\mbox{avg}}(t)-\btheta\|=0\right)=1
	\end{align}
	holds, then there exists $\tau \in \left(\overline{\tau},1/2\right)$ for which the convergence continues to hold. Finally, an application of induction yields the result
	\begin{align}
	\label{eq:thc_13}
	\mathbb{P}_{\btheta}\left(\lim_{t\rightarrow\infty}(t+1)^{\tau}\|\mathbf{x}_{\mbox{avg}}(t)-\btheta\|=0\right)=1, \forall \tau\in[0,1/2)
	\end{align}
	The above result in conjunction with Lemma \ref{le:l2} and the usage of triangle inequality yields $\forall \tau\in[0,1/2)$
	\begin{align}
	\label{eq:thc_14}
	&(t+1)^{\tau}\left\|\mathbf{x}_{n}(t)-\btheta\right\|\le(t+1)^{\tau}\left\|\mathbf{x}_{\mbox{avg}}(t)-\btheta\right\|\nonumber\\&+(t+1)^{\tau}\left\|\mathbf{x}_{n}(t)-\mathbf{x}_{\mbox{avg}}(t)\right\|\nonumber\\
	&\Rightarrow\lim_{t\to\infty}(t+1)^{\tau}\left\|\mathbf{x}_{n}(t)-\btheta\right\| = 0~a.s.
	\end{align}
\end{proof}
\begin{proof}[Proof of Theorem \ref{th:credo}]
	Proceeding as in proof of Lemma \ref{le:conv}, we have, for $t$ large enough
	\begin{align}
	\label{eq:lc_pr_13}
	&\mathbb{E}_{\btheta}[V(t+1)|\mathcal{F}_{t}]\le\left(1-2c_{4}\alpha_{t}+c_{7}\alpha^{2}_{t}\right)V(t)+c_{6}\alpha_{t}^{2}\nonumber\\
	&\le V(t)+c_{6}\alpha_{t}^{2},
	\end{align}
	\noindent as for $t$ large enough, $-c_{4}\alpha_{t}+c_{7}\alpha^{2}_{t}<0$. Before proceeding further, we note that, from \eqref{eq:ineq1},
	\begin{align}
	\label{eq:det1_5}
	&\mathbf{x}^{\top}\left(\beta_{t}\left(\overline{\mathbf{L}}\otimes\mathbf{I}_{M}\right)+\alpha_{t}\mathbf{G}_{H}\mathbf{\Sigma}^{-1}\mathbf{G}_{H}^{\top}\right)\mathbf{x}\nonumber\\
	&=\alpha_{t}\mathbf{x}^{\top}\left(\frac{\beta_{t}}{\alpha_{t}}\left(\overline{\mathbf{L}}\otimes\mathbf{I}_{M}\right)+\mathbf{G}_{H}\mathbf{\Sigma}^{-1}\mathbf{G}_{H}^{\top}\right)\mathbf{x}\nonumber\\
	&\geq\alpha_{t}\mathbf{x}^{\top}\left(\left(\mathbf{L}\otimes\mathbf{I}_{M}\right)+\mathbf{G}_{H}\mathbf{\Sigma}^{-1}\mathbf{G}_{H}^{\top}\right)\mathbf{x}\geq c_{4}\alpha_{t},
	\end{align}
	where
	\begin{align}
	\label{eq:det1_55}
	c_{4}=\lambda_{\mbox{\scriptsize{min}}}\left(\left(\overline{\mathbf{L}}\otimes\mathbf{I}_{M}\right)+\mathbf{G}_{H}\mathbf{\Sigma}^{-1}\mathbf{G}_{H}^{\top}\right).
	\end{align}
	Thus, we have that
	\begin{align}
	\label{eq:det1_8_1}
	\left\|\mathbf{I}_{NM}-\beta_{t}\left(\overline{\mathbf{L}}\otimes\mathbf{I}_{M}\right)-\alpha_{t}\mathbf{G}_{H}\mathbf{\Sigma}^{-1}\mathbf{G}_{H}^{\top}\right\|\le 1-c_{4}\alpha_{t},
	\end{align}
	for all $t\geq t_{1}$, where $t_{1}$ is chosen to be appropriately large.
	\noindent Now, consider the $\{\mathcal{F}_{t}\}$-adapted process $\{V_{1}(t)\}$ defined as follows
	\begin{align}
	\label{eq:lc_pr_14}
	&V_{1}(t)=V(t)+c_{6}\sum_{s=t}^{\infty}\alpha_{s}^{2}\nonumber\\
	&=V(t)+c_{8}\sum_{s=t}^{\infty}(t+1)^{-2},
	\end{align}		
	\noindent for appropriately chosen positive constant $c_{8}$.Since, $\{(t+1)^{-2}\}$ is summable, the process $\{V_{1}(t)\}$ is bounded from above. Moreover, it also follows that $\{V_{1}(t)\}_{t\geq t_{1}}$ is a supermartingale and hence converges a.s. to a finite random variable. By definition from \eqref{eq:l1_pr_14}, we also have that $\{V(t)\}$ converges to a non-negative finite random variable $V^{*}$. Finally, from \eqref{eq:lc_pr_13}, we have that,
	\begin{align}
	\label{eq:lc_pr_15}
	&\mathbb{E}_{\btheta}[V(t+1)]\le \left(1-c_{4}\alpha_{t}\right)\mathbb{E}_{\btheta}[V(t)]+c_{8}(t+1)^{-2}\nonumber\\
	&\Rightarrow {E}_{\btheta}[V(t+1)]\le \left(1-c_{4}\alpha_{t}\right)\mathbb{E}_{\btheta}[V(t)]+c_{10}\alpha_t(t+1)^{-1}
	\end{align}
	\noindent for $t\geq t_{1}$. The summability of $\{\alpha_{t}\}$ in conjunction with assumption \ref{m:4} ensures that the sequence $\{V(t)\}$ then falls under the purview of Lemma \ref{l33}, and we have
	\begin{align}
	\label{eq:lc_pr_16}
	&\limsup_{t\to\infty}(t+1)\mathbb{E}_{\btheta}[V(t+1)] < \infty \nonumber\\
	&\Rightarrow \mathbb{E}_{\btheta}[V(t)] = O\left(\frac{1}{t}\right).
	\end{align}
	Furthermore, from \eqref{eq:lc_pr_14}, we also have that
	\begin{align}
	\label{eq:lc_pr_17}
	&\mathbb{E}_{\btheta}[V_{1}(t)]\le \mathbb{E}_{\btheta}[V(t)]+\frac{c_{6}\pi^{2}}{6}\nonumber\\
	&\Rightarrow \mathbb{E}_{\btheta}[\left\|\mathbf{x}_{n}(t)-\btheta\right\|^{2}] = O\left(\frac{1}{t}\right).
	\end{align}
	\noindent It is to be noted that the communication cost $\mathcal{C}_{t}$ for the proposed $\mathcal{CREDO}$ algorithm, is given by $\mathcal{C}_{t} = \Theta\left(t^{1+\frac{\epsilon-\tau_{1}}{2}}\right)$ and thus the assertion follows in conjunction with \eqref{eq:lc_pr_17}.	
\end{proof}
\subsection{Asymptotic Normality and Covariance}
The proof of Theorem \ref{th:2} needs the following Lemma from \citet{Fabian-2} concerning the asymptotic normality of the stochastic recursions.
\begin{Lemma}[Theorem 2.2 in \citet{Fabian-2}]
	\label{main_res_l0}
	\noindent Let $\{\mathbf{z}_{t}\}$ be an $\mathbb{R}^{k}$-valued $\{\mathcal{F}_{t}\}$-adapted process that satisfies
	\begin{align}
	\label{eq:l0_1}
	&\mathbf{z}_{t+1}=\left(\mathbf{I}_{k}-\frac{1}{t+1}\Gamma_{t}\right)\mathbf{z}_{t}+(t+1)^{-1}\mathbf{\Phi}_{t}\mathbf{V}_{t}\nonumber\\&+(t+1)^{-3/2}\mathbf{T}_{t},
	\end{align}
	\noindent where the stochastic processes $\{\mathbf{V}_{t}\}, \{\mathbf{T}_{t}\} \in \mathbb{R}^{k}$ while $\{\mathbf{\Gamma}_{t}\}, \{\mathbf{\Phi}_{t}\} \in \mathbb{R}^{k\times k}$. Moreover, suppose for each $t$, $\mathbf{V}_{t-1}$ and $\mathbf{T}_{t}$ are $\mathcal{F}_{t}$-adapted, whereas the processes $\{\mathbf{\Gamma}_{t}\}$, $\{\mathbf{\Phi}_{t}\}$ are $\{\mathcal{F}_{t}\}$-adapted.	
	\noindent Also, assume that
	\begin{align}
	\label{eq:l0_2}
	\mathbf{\Gamma}_{t}\to\mathbf{\Gamma}, \mathbf{\Phi}_{t}\to\mathbf{\Phi},~ \textit{and} ~\mathbf{T}_{t}\to 0 ~~\mbox{a.s. as $t\rightarrow\infty$},
	\end{align}
	\noindent where $\mathbf{\Gamma}$ is a symmetric and positive definite matrix, and admits an eigen decomposition of the form $\mathbf{P}^{\top}\mathbf{\Gamma}\mathbf{P}=\mathbf{\Lambda}$, where $\mathbf{\Lambda}$ is a diagonal matrix and $\mathbf{P}$ is an orthogonal matrix. Furthermore, let the sequence $\{\mathbf{V}_{t}\}$ satisfy $\mathbb{E}\left[\mathbf{V}_{t}|\mathcal{F}_{t}\right]=0$ for each $t$ and suppose there exists a positive constant $C$ and a matrix $\Sigma$ such that $C > \left\|\mathbb{E}\left[\mathbf{V}_{t}\mathbf{V}_{t}^{\top}|\mathcal{F}_{t}\right]-\Sigma\right\|\to 0~a.s. ~\textit{as}~ t\to\infty$ and with $\sigma_{t,r}^{2}=\int_{\left\|\mathbf{V}_{t}\right\|^{2} \ge r(t+1)}\left\|\mathbf{V}_{t}\right\|^{2}d\mathbb{P}$, let $\lim_{t\to\infty}\frac{1}{t+1}\sum_{s=0}^{t}\sigma_{s,r}^{2}=0$ for every $r > 0$. Then, we have,
	\begin{align}
	\label{eq:l0_3}
	(t+1)^{1/2}\mathbf{z}_{t}\overset{\mathcal{D}}{\Longrightarrow}\mathcal{N}\left(\mathbf{0}, \mathbf{P}\mathbf{M}\mathbf{P}^{\top}\right),
	\end{align}
	\noindent where the $(i,j)$-th entry of the matrix $\mathbf{M}$ is given by
	\begin{align}
	\label{eq:l0_4}
	\left[\mathbf{M}\right]_{ij}=\left[\mathbf{P}^{\top}\mathbf{\Phi}\mathbf{\Sigma}\mathbf{\Phi}^{\top}\mathbf{P}\right]_{ij}\left(\left[\mathbf{\Lambda}\right]_{ii}+\left[\mathbf{\Lambda}\right]_{jj}-1\right)^{-1}.
	\end{align}
\end{Lemma}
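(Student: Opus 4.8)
The plan is to diagonalize the limiting drift, write the solution of the linear recursion \eqref{eq:l0_1} explicitly, approximate the random transition operator by a deterministic one so that the noise term becomes a clean martingale-difference array, and then apply a multivariate martingale central limit theorem, computing the limiting covariance by a sum-to-integral estimate that produces the Lyapunov-type denominators in \eqref{eq:l0_4}.

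\textbf{Diagonalization.} First I would put $\mathbf{w}_{t}=\mathbf{P}^{\top}\mathbf{z}_{t}$. Orthogonality of $\mathbf{P}$ turns \eqref{eq:l0_1} into the same recursion with $\mathbf{\Gamma}_{t}\mapsto\mathbf{P}^{\top}\mathbf{\Gamma}_{t}\mathbf{P}$, $\mathbf{\Phi}_{t}\mapsto\mathbf{P}^{\top}\mathbf{\Phi}_{t}$, $\mathbf{T}_{t}\mapsto\mathbf{P}^{\top}\mathbf{T}_{t}$, and limiting drift $\mathbf{P}^{\top}\mathbf{\Gamma}\mathbf{P}=\mathbf{\Lambda}=\mathrm{diag}(\lambda_{1},\dots,\lambda_{k})$. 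It then suffices to prove $(t+1)^{1/2}\mathbf{w}_{t}\overset{\mathcal{D}}{\Longrightarrow}\mathcal{N}(\mathbf{0},\mathbf{M})$ and multiply back by $\mathbf{P}$. Throughout I use $\lambda_{i}+\lambda_{j}>1$, the condition under which the entries $[\mathbf{M}]_{ij}$ are well defined and the limiting variance is finite; in particular $\lambda_{\min}>1/2$.

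\textbf{Transition operator.} Unrolling the $\mathbf{w}$-recursion from a large deterministic time $t_{0}$,
\begin{align}
\label{eq:np_unroll}
\mathbf{w}_{t}=\mathbf{\Psi}(t,t_{0})\mathbf{w}_{t_{0}}+\sum_{s=t_{0}}^{t-1}\mathbf{\Psi}(t,s+1)\left[(s+1)^{-1}\mathbf{P}^{\top}\mathbf{\Phi}_{s}\mathbf{V}_{s}+(s+1)^{-3/2}\mathbf{P}^{\top}\mathbf{T}_{s}\right],
\end{align}
where $\mathbf{\Psi}(t,s)=\prod_{j=s}^{t-1}\bigl(\mathbf{I}_{k}-(j+1)^{-1}\mathbf{P}^{\top}\mathbf{\Gamma}_{j}\mathbf{P}\bigr)$. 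Since $\mathbf{\Psi}(t,s+1)$ depends on $\mathbf{\Gamma}_{j}$ for $j\ge s+1$, it is \emph{not} $\mathcal{F}_{s}$-measurable, so I would replace it by the deterministic diagonal product $\widehat{\mathbf{\Psi}}(t,s)=\prod_{j=s}^{t-1}\bigl(\mathbf{I}_{k}-(j+1)^{-1}\mathbf{\Lambda}\bigr)$, whose entries obey $[\widehat{\mathbf{\Psi}}(t,s)]_{ii}=\prod_{j=s}^{t-1}(1-\lambda_{i}/(j+1))=\bigl((s+1)/(t+1)\bigr)^{\lambda_{i}}(1+o(1))$. Because $\mathbf{P}^{\top}\mathbf{\Gamma}_{j}\mathbf{P}\to\mathbf{\Lambda}$, a telescoping/Gr\"onwall estimate on the difference of the two products should show $\|\mathbf{\Psi}(t,s)-\widehat{\mathbf{\Psi}}(t,s)\|$ is of strictly smaller order than $(s/t)^{\lambda_{\min}}$ uniformly in $t_{0}\le s\le t$, so $\mathbf{\Psi}$ may be replaced by $\widehat{\mathbf{\Psi}}$ everywhere below at the cost of terms that vanish after multiplication by $(t+1)^{1/2}$.

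\textbf{Negligible terms and the martingale CLT.} Using $\|\widehat{\mathbf{\Psi}}(t,s)\|\le C(s/t)^{\lambda_{\min}-\eta}$ and $\lambda_{\min}>1/2$, the initial term satisfies $(t+1)^{1/2}\|\widehat{\mathbf{\Psi}}(t,t_{0})\mathbf{w}_{t_{0}}\|\le C(t+1)^{1/2-\lambda_{\min}+\eta}\to 0$, and the remainder term is a Toeplitz-weighted average of $\mathbf{T}_{s}\to 0$ and hence tends to $0$ a.s. Thus $(t+1)^{1/2}\mathbf{w}_{t}$ has the same limit law as
\begin{align}
\label{eq:np_mart}
\mathbf{S}_{t}=(t+1)^{1/2}\sum_{s=t_{0}}^{t-1}(s+1)^{-1}\widehat{\mathbf{\Psi}}(t,s+1)\mathbf{P}^{\top}\mathbf{\Phi}_{s}\mathbf{V}_{s}.
\end{align}
For each fixed $t$ the summands form a martingale-difference array: the weight $\widehat{\mathbf{\Psi}}(t,s+1)\mathbf{P}^{\top}\mathbf{\Phi}_{s}$ is $\mathcal{F}_{s}$-measurable and $\mathbb{E}[\mathbf{V}_{s}\mid\mathcal{F}_{s}]=0$. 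The conditional Lindeberg condition is supplied by the hypothesis $\lim_{t}(t+1)^{-1}\sum_{s}\sigma_{s,r}^{2}=0$ carried through the uniformly bounded weights, while $\mathbb{E}[\mathbf{V}_{s}\mathbf{V}_{s}^{\top}\mid\mathcal{F}_{s}]\to\mathbf{\Sigma}$ and $\mathbf{\Phi}_{s}\to\mathbf{\Phi}$ give convergence of the predictable quadratic variation. Passing to the diagonal coordinates, its $(i,j)$ entry is
\begin{align}
\label{eq:np_cov}
\lim_{t\to\infty}(t+1)\sum_{s=t_{0}}^{t-1}\Bigl(\tfrac{s+1}{t+1}\Bigr)^{\lambda_{i}+\lambda_{j}}(s+1)^{-2}\bigl[\mathbf{P}^{\top}\mathbf{\Phi}\mathbf{\Sigma}\mathbf{\Phi}^{\top}\mathbf{P}\bigr]_{ij}=\frac{\bigl[\mathbf{P}^{\top}\mathbf{\Phi}\mathbf{\Sigma}\mathbf{\Phi}^{\top}\mathbf{P}\bigr]_{ij}}{\lambda_{i}+\lambda_{j}-1},
\end{align}
where the evaluation uses $\sum_{s=t_{0}}^{t-1}(s+1)^{\lambda_{i}+\lambda_{j}-2}\sim(t+1)^{\lambda_{i}+\lambda_{j}-1}/(\lambda_{i}+\lambda_{j}-1)$. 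This is exactly $[\mathbf{M}]_{ij}$, so $\mathbf{S}_{t}\overset{\mathcal{D}}{\Longrightarrow}\mathcal{N}(\mathbf{0},\mathbf{M})$ and \eqref{eq:l0_3} follows on undoing the rotation $\mathbf{P}$.

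\textbf{Main obstacle.} The delicate step will be the transition-operator analysis: because $\mathbf{\Gamma}_{t}$ is random and only converges to $\mathbf{\Gamma}$, the product $\mathbf{\Psi}(t,s)$ is neither explicit nor $\mathcal{F}_{s}$-measurable. One must (i) justify replacing it by the deterministic $\widehat{\mathbf{\Psi}}(t,s)$ with an error that is negligible \emph{uniformly} over the $t$-dependent triangular array of summands, and (ii) verify that off-diagonal coupling in $\widehat{\mathbf{\Psi}}$ does not corrupt the leading $\bigl((s+1)/(t+1)\bigr)^{\lambda_{i}}$ scaling that drives \eqref{eq:np_cov}. Once these uniform bounds are secured, both the negligibility of the initial/remainder terms and the interchange of limit and summation in the covariance computation are routine, and the conclusion follows from the standard multivariate martingale CLT.
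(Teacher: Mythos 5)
The paper offers no proof of this lemma: it is imported verbatim as Theorem~2.2 of \citet{Fabian-2} and used as a black box, so there is nothing internal to compare your argument against. Judged on its own, your sketch is a faithful outline of how Fabian's result is actually proved (diagonalize, unroll the linear recursion, replace the random transition products by the deterministic ones built from $\mathbf{\Lambda}$, kill the initial-condition and $\mathbf{T}_{t}$ contributions, and apply a martingale CLT to the resulting triangular array, with the sum-to-integral evaluation $\sum_{s}(s+1)^{\lambda_{i}+\lambda_{j}-2}\sim(t+1)^{\lambda_{i}+\lambda_{j}-1}/(\lambda_{i}+\lambda_{j}-1)$ producing the denominators in \eqref{eq:l0_4}). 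Two remarks. First, you correctly supply the hypothesis $\lambda_{i}+\lambda_{j}>1$ (equivalently $\lambda_{\min}(\mathbf{\Gamma})>1/2$), which the lemma as transcribed here omits but without which \eqref{eq:l0_4} is meaningless; in the paper's application this is guaranteed by Assumption~\ref{m:4}, since there $\mathbf{\Gamma}_{t}\equiv a\mathbf{\Gamma}$ with $a\lambda_{\min}(\mathbf{\Gamma})\geq 1$. Second, the step you flag as the main obstacle is genuinely the delicate one: $\mathbf{\Gamma}_{t}\to\mathbf{\Gamma}$ only almost surely with no rate, so the claimed uniform bound $\|\mathbf{\Psi}(t,s)-\widehat{\mathbf{\Psi}}(t,s)\|=o\bigl((s/t)^{\lambda_{\min}}\bigr)$ does not follow from a one-line Gr\"onwall estimate; the perturbation enters the \emph{exponent} of the product, so one must work on the event $\sup_{j\geq s_{0}}\|\mathbf{\Gamma}_{j}-\mathbf{\Gamma}\|\leq\eta$, absorb a factor $(t/s)^{\eta}$ into the exponent, and let $\eta\downarrow 0$ at the end (this is why $\lambda_{\min}>1/2$ strictly, rather than $\geq$, is needed). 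Fabian's own proof organizes this as a chain of reductions to the case $\mathbf{\Gamma}_{t}\equiv\mathbf{\Gamma}$, $\mathbf{\Phi}_{t}\equiv\mathbf{\Phi}$, $\mathbf{T}_{t}\equiv 0$ rather than a single Gr\"onwall bound, but the substance is the same. Your sketch is an acceptable reconstruction of the cited result, not a new proof of anything the paper itself establishes.
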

In order to establish asymptotic normality and characterize the estimator in terms of asymptotic covariance, the following Lemma plays a crucial role.
\begin{proof}[Proof of Theorem \ref{th:2}]
	We invoke the definition of the process $\{\mathbf{z}_{t}\}$ as defined in \eqref{eq:l1_2}-\eqref{eq:l1_3}. We rewrite the recursion for $\{\mathbf{z}_{t}\}$ as follows:
	\begin{align}
	\label{eq:l3_2}
	\mathbf{z}_{t+1} = \left(\mathbf{I}_{M}-\alpha_{t}\Gamma_{t}\right)\mathbf{z}_{t}+(t+1)^{-3/2}\mathbf{T}_{t}+(t+1)^{-1}\mathbf{\Phi}_{t}\mathbf{V}_{t},
	\end{align}
	where
	\begin{align}
	\label{eq:l3_3}
	&\mathbf{\Gamma}_{t} = \mathbf{\Gamma} = \frac{1}{N}\sum_{n=1}^{N}\mathbf{H}_{n}^{\top}\mathbf{\Sigma}_{n}^{-1}\mathbf{H}_{n}\nonumber\\
	&\mathbf{T}_{t}=a(t+1)^{1/2}\mathbf{U}_{t}\nonumber\\& = \frac{a}{N}\sum_{n=1}^{N}\mathbf{H}_{n}^{\top}\mathbf{\Sigma}_{n}^{-1}(t+1)^{1/2}\left(\mathbf{x}_{n}(t)-\mathbf{x}_{\mbox{avg}}(t)\right)\to 0,~t\to\infty\nonumber\\
	&\mathbf{\Phi}_{t} = a\mathbf{I}\nonumber\\
	&\mathbf{V}_{t}=\mathbf{J}_{t}=\frac{1}{N}\sum_{n=1}^{N}\mathbf{H}_{n}^{\top}\mathbf{\Sigma}_{n}^{-1}\mathbf{\gamma}_{n}(t),~\mathbb{E}\left[\mathbf{V}_{t}|\mathcal{F}_{t}\right]=0,\nonumber\\&\mathbb{E}\left[\mathbf{V}_{t}\mathbf{V}^{\top}_{t}|\mathcal{F}_{t}\right]=\frac{1}{N^{2}}\sum_{n=1}^{N}\mathbf{H}_{n}^{\top}\mathbf{\Sigma}_{n}^{-1}\mathbf{H}_{n},
	\end{align}
	and the convergence of $\mathbf{T}_{t}$ follows from Lemma \ref{le:l2}. Due to the i.i.d nature of the noise process, we have the uniform integrability condition for the process $\{\mathbf{V}_{t}\}$. Hence, $\{\mathbf{x}_{\mbox{\scriptsize{avg}}}(t)\}$ falls under the purview of Lemma \ref{main_res_l0} and we thus conclude that
	\begin{align}
	\label{eq:l3_4}
	(t+1)^{1/2}\left(\mathbf{x}_{\mbox{\scriptsize{avg}}}(t)-\btheta\right)\overset{\mathcal{D}}{\Longrightarrow}\mathcal{N}(0,\mathbf{P}\mathbf{M}\mathbf{P}^{\top}),
	\end{align}
	\noindent where
	\begin{align}
	\label{eq:l3_5}
	&a\mathbf{P}^{\top}\mathbf{\Gamma}\mathbf{P}=a\mathbf{\Lambda},\nonumber\\
	&\left[\mathbf{M}\right]_{ij}=\left[a^{2}\mathbf{P}^{\top}\mathbf{\Phi}\left(\frac{1}{N^{2}}\sum_{n=1}^{N}\mathbf{H}_{n}^{\top}\mathbf{\Sigma}_{n}^{-1}\mathbf{H}_{n}\right)\mathbf{\Phi}^{\top}\mathbf{P}\right]_{ij}\nonumber\\&\times\left(a\left[\mathbf{\Lambda}\right]_{ii}+a\left[\mathbf{\Lambda}\right]_{jj}-1\right)^{-1}\nonumber\\&=\frac{a^{2}}{N}\left[\mathbf{\Lambda}\right]_{ij}\left(a\left[\mathbf{\Lambda}\right]_{ii}+a\left[\mathbf{\Lambda}\right]_{jj}-1\right)^{-1},
	\end{align}
	\noindent which also implies that $\mathbf{M}$ is a diagonal matrix with its $i$-th diagonal element given by $\frac{a^{2}\mathbf{\Lambda}_{ii}}{2aN\mathbf{\Lambda}_{ii}-N}$. Note that, Assumption \ref{m:4} ensures that $\frac{a^{2}\mathbf{\Lambda}_{ii}}{2aN\mathbf{\Lambda}_{ii}-N} > 0$,~$\forall i$. We already have that $\mathbf{P}\mathbf{\Lambda}\mathbf{P}^{\top}=\mathbf{\Gamma}$. Hence, the matrix with eigenvalues as {\small$\frac{a^{2}\mathbf{\Lambda}_{ii}}{2aN\mathbf{\Lambda}_{ii}-N}$} is given by
	\begin{align}
	\label{eq:l3_6}
	\mathbf{P}\mathbf{M}\mathbf{P}^{\top}=\frac{a\mathbf{I}}{2N}+\frac{\left(\mathbf{\Gamma}-\frac{\mathbf{I}}{2a}\right)^{-1}}{4N}.
	\end{align}
	\noindent Now from Lemma \ref{le:l2}, we have that the processes $\{\mathbf{x}_{n}(t)\}$ and $\{\mathbf{x}_{\mbox{\scriptsize{avg}}}(t)\}$ are indistinguishable in the $(t+1)^{1/2}$ time scale, which is formalized as follows:
	\begin{align}
		\label{eq:l3_7}
		&\mathbb{P}_{\btheta}\left(\lim_{t\to\infty}\left\|\sqrt{t+1}\left(\mathbf{x}_{n}(t)-\btheta\right)-\sqrt{t+1}\left(\mathbf{x}_{\mbox{\scriptsize{avg}}}(t)-\btheta\right)\right\|=0\right)\nonumber\\
		&=\mathbb{P}_{\btheta}\left(\lim_{t\to\infty}\left\|\sqrt{t+1}\left(\mathbf{x}_{n}(t)-\mathbf{x}_{\mbox{\scriptsize{avg}}}(t)\right)\right\|=0\right)=1.
		\end{align}
	\noindent Thus, the difference of the sequences $\left\{\sqrt{t+1}\left(\mathbf{x}_{n}(t)-\btheta\right)\right\}$ and $\left\{\sqrt{t+1}\left(\mathbf{x}_{\mbox{\scriptsize{avg}}}(t)-\btheta\right)\right\}$ converges a.s. to zero as $t\rightarrow\infty$ \noindent and hence we have,
	\begin{align}
	\label{eq:t2_pr_12}
	\sqrt{t+1}\left(\mathbf{x}_{n}(t)-\btheta\right)\overset{\mathcal{D}}{\Longrightarrow}\mathcal{N}\left(0,\frac{a\mathbf{I}}{2N}+\frac{\left(\mathbf{\Gamma}-\frac{\mathbf{I}}{2a}\right)^{-1}}{4N}\right).
	\end{align}
\end{proof}
\vskip 0.2in
\bibliographystyle{plainnat}
\bibliography{CentralBib,glrt,dsprt}
\end{document}